\numberwithin{equation}{section}
\theoremstyle{plain}
\newtheorem{theorem}[equation]{Theorem}
\newtheorem{lemma}[equation]{Lemma}
\newtheorem{corollary}[equation]{Corollary}
\newtheorem{observation}[equation]{Observation}
\newtheorem{o.problem}[equation]{Question}
\theoremstyle{definition}
\newtheorem{definition}[equation]{Definition}
\newtheorem{example}[equation]{Example}
\newtheorem{remark}[equation]{Remark}
\def\C{\mathbb{C}}
\def\N{\mathbb{N}}
\def\Z{\mathbb{Z}}
\def\Q{\mathbb{Q}}
\DeclareMathOperator{\Gal}{Gal}
\DeclareMathOperator{\Mon}{Mon}
\DeclareMathOperator{\charp}{char}
\title[Functional Composition of polynomials]{Functional Composition of polynomials: Indecomposability, Diophantine equations and Lacunary polynomials}
\author{Dijana Kreso}
\address{
Institut f\"ur Analysis und Computational Number Theory (Math A)\\
Technische Universit\"at Graz\\ Steyrergasse 30/II\\
8010 Graz, Austria
}
\email{kreso@math.tugraz.at}
\author{Robert F.\@ Tichy}
\address{
Institut f\"ur Analysis und Computational Number Theory (Math A)\\
Technische Universit\"at Graz\\ Steyrergasse 30/II\\
8010 Graz, Austria
}
\email{tichy@tugraz.at}
\date{}
\begin{document}

\dedicatory{Dedicated to Professor Ludwig Reich on the occasion of his $75$th birthday}

\begin{abstract}
Starting from Ritt's classical theorems, we give a survey of results in functional decomposition of polynomials and of applications in Diophantine equations. This includes sufficient conditions for the indecomposability of polynomials, the study of decompositions of lacunary polynomials and the finiteness criterion for the equations of type $f(x)=g(y)$.
\end{abstract}

\keywords{polynomial decomposition, monodromy group, reducibility, Diophantine equations, lacunary polynomials}

\maketitle


\section{Introduction}

In 1920's, in the frame of investigations of functional equations by the founders of modern iteration theory (Fatou, Julia and Ritt), Ritt~\cite{R22} studied equations of type
\[
f_1\circ f_2\circ \dots \circ f_m=g_1\circ g_2\circ \dots \circ g_n
\]
in nonconstant complex polynomials. This resulted in him studying the possible ways of writing a complex polynomial as a functional composition of polynomials of lower degree.

For an arbitrary field $K$, a polynomial $f\in K[x]$ with $\deg f>1$ is called \emph{indecomposable} (over $K$)
if it cannot be written as the composition $f(x)=g(h(x))$ with $g,h\in K[x]$ and $\deg g>1$, $\deg h>1$. Any representation of $f(x)$ as a functional composition of  polynomials of degree greater than $1$ is said to be a \emph{decomposition} of $f(x)$. It follows by induction that any polynomial $f(x)$ with $\deg f>1$ can be written as a composition of indecomposable polynomials -- such an expression for $f(x)$ is said to be a {\it complete decomposition} of $f(x)$. A complete decomposition of a polynomial clearly always exists, but it does not need to be unique.
Ritt showed that when $K=\C$ any complete decomposition of $f(x)$ can be obtained from any other through finitely many steps, where each step consists of replacing two adjacent indecomposable polynomials in a complete decomposition of $f(x)$ by two others with the same composition. Ritt then solved the equation $a\circ b = c\circ d$ in indecomposable polynomials $a, b, c, d\in \C[x]$. In this way, Ritt completely described the extent of non-uniqueness of  factorization of polynomials with complex coefficients with respect to functional composition.

Ritt wrote his proofs  in the language of Riemann surfaces and obtained results for polynomials over complex numbers. His results have been extended to polynomials over fields other than the complex numbers  by Engstrom~\cite{E41}, Levi~\cite{L42}, Fried and McRae~\cite{FM69}, Fried~\cite{F74}, Dorey and Whaples~\cite{DW74}, Schinzel~\cite{S82, S00}, Tortrat~\cite{T88} and Zannier~\cite{Z93}. Their results found applications in a variety of topics, see \cite{BWZ09, BT00, GTZ08, Pak08, Pak11, Z98}.

One such topic is the classification of polynomials $f, g\in \Q[x]$ such that the equation $f(x)=g(y)$ has infinitely many integer solutions. This problem has been of interest to number theorists at least since the 20's of the past century  when Siegel's classical theorem~\cite{S29} on integral points on curves appeared. The classification has been completed by Bilu and Tichy~\cite{BT00} in 2000, building on the work of Fried~\cite{F73, F734, F74} and Schinzel~\cite{S82}. Their theorem proved to be widely applicable and has served to prove finiteness of integer solutions of various Diophantine equations of type $f(x)=g(y)$, see for instance \cite{BKLP12, BBKPT02, BFLP13, BST00, DG06, DT01, KR13, KS03, KS05, PPS11, P09, R04, T03, ST03, ST05}.

In the present paper we survey polynomial decomposition results and the applications to Diophantine equations. In Section~\ref{SecFirst}, we present Ritt's Galois-theoretic framework for addressing decomposition questions. In Section~\ref{SecFried}, we explain the connections between decompositions of $f(x)$ and $g(x)$ and reducibility of $f(x)-g(y)$  and we explain the connections to Diophantine equations. In Section~\ref{SecSecond}, we focus on applications of the criterion of Bilu and Tichy. We survey methods used in the applications and we illustrate an application of the criterion by proving some new results. We give a number of remarks about sufficient conditions for the indecomposability of polynomials, which haven't been present in the literature. In Section~\ref{lacunary}, we focus on decompositions of lacunary polynomias (polynomials with few terms), which have received a special attention in the literature on polynomial decomposition. We survey recent developments in this area and we present a new result on decompositions of quadrinomials.

\section{Galois-theoretic approach to decomposition questions}\label{SecFirst}

In this section we present a framework which serves us to translate many questions about polynomial decomposition into field theoretic and group theoretic questions. For a detailed presentation see \cite[Chap.\@~1]{S00} and \cite{ZM}. We first recall two classical theorems of Ritt~\cite{R22} and we give a number of indications on how to prove these results.

\begin{theorem}\label{First}
Let $K$ be a field and let $f\in K[x]$ be such that $\charp(K)\nmid \deg f$ and $\deg f>1$. Then any complete decomposition $f=f_1\circ f_2\circ \cdots \circ f_m$ of $f(x)$ can be obtained from any other $f=g_1\circ g_2\circ \cdots \circ g_n$  through finitely many steps, where each step consists of replacing two adjacent indecomposables $h_i\circ h_{i+1}$ in a complete decomposition of $f(x)$  by two others $\hat h_i\circ \hat h_{i+1}$ that satisfy $h_i\circ h_{i+1}=\hat h_i\circ \hat h_{i+1}$ and $\{\deg h_i, \deg h_{i+1}\}=\{\deg \hat h_i, \deg \hat h_{i+1}\}$. 

In particular, $m=n$ and the sequence $(\deg f_i)_{1\le i\le n}$ is a permutation of the sequence $(\deg g_i)_{1\le i\le n}$.
\end{theorem}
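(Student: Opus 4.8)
The plan is to translate the statement into the language of field extensions and then into a combinatorial fact about the divisor lattice of $N:=\deg f$. First I would introduce a transcendental $t$ over $K$ and consider $f(x)-t\in K(t)[x]$; this is irreducible over $K(t)$, so for a root $x$ in an algebraic closure we get a degree-$N$ extension $K(t)\subseteq K(x)$. The key dictionary to establish is: decompositions $f=g\circ h$ (taken up to the harmless replacements $h\mapsto\mu\circ h$, $g\mapsto g\circ\mu^{-1}$ for linear $\mu$) correspond to intermediate fields $K(t)\subseteq L\subseteq K(x)$, via $g\circ h\mapsto K(h(x))$. The forward direction is immediate since $t=g(h(x))\in K(h(x))$; the reverse direction is the delicate point: given $L$, Lüroth's theorem yields $L=K(w)$ for some $w$, and one must show $w$ can be taken to be a \emph{polynomial} $h(x)\in K[x]$ with $t=g(h(x))$ for some $g\in K[x]$. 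This is exactly where $\charp(K)\nmid N$ enters: the place $x=\infty$ is the unique place of $K(x)$ above $t=\infty$, and it is totally and tamely ramified, which propagates to every intermediate field and forces each one to come from a polynomial decomposition. Under this dictionary, refinements of decompositions correspond to insertions of intermediate fields, so a complete decomposition $f=f_1\circ\cdots\circ f_m$ corresponds to a maximal (unrefinable) chain $K(t)=L_0\subsetneq L_1\subsetneq\cdots\subsetneq L_m=K(x)$ with $\deg f_j=[L_j:L_{j-1}]$.

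Next I would pin down the shape of the lattice of intermediate fields. Passing to the Galois closure $M$ of $K(x)/K(t)$ with $G=\Gal(M/K(t))$ and $H=\Gal(M/K(x))$, the intermediate fields correspond to subgroups $H\le U\le G$, which in turn correspond to $G$-invariant block systems of the degree-$N$ action of $G$ on $G/H$. Tame total ramification at infinity furnishes an $N$-cycle $c\in G$ (an inertia generator at a place of $M$ above $\infty$); since $\langle c\rangle$ is already transitive, every $G$-invariant block system is one of the residue-class block systems for $c$, hence is indexed by a divisor $d\mid N$, the number of blocks, which is just $d=[L:K(t)]$. One checks that this indexing is inclusion-preserving ($L_1\subseteq L_2\iff d_1\mid d_2$) and that intersection and compositum of intermediate fields go to $\gcd$ and $\lcm$ of the indices. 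Thus the lattice of intermediate fields is isomorphic to a sublattice $\mathcal D$ of the divisor lattice of $N$, with $1,N\in\mathcal D$; in particular $\mathcal D$ is distributive, hence modular, and the length of a covering step from $d$ to $e$ in $\mathcal D$ is the integer $e/d$, i.e.\ the corresponding $\deg f_j$.

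The statement then follows from classical lattice theory. In a modular lattice of finite length, the Jordan--H\"older--Dedekind chain condition says that all maximal chains from $\hat 0$ to $\hat 1$ have equal length and that any two of them are connected by finitely many elementary transpositions: one replaces a middle element $b$ of three consecutive chain elements $a\prec b\prec c$ by some $b'$ with $a\prec b'\prec c$, whence automatically $a=b\wedge b'$ and $c=b\vee b'$. Translated back through the dictionary, an elementary transposition is precisely a step of the theorem — replacing $h_i\circ h_{i+1}$ by $\hat h_i\circ\hat h_{i+1}$ with the same composition — and it again produces a \emph{complete} decomposition because covering steps correspond to indecomposable factors. Finally, for divisors one has $b'/a=\lcm(b,b')/b=c/b$ and $c/b'=b/\gcd(b,b')=b/a$, so the unordered pair of step-lengths, i.e.\ of degrees, is preserved by each transposition (more generally, projective intervals in $\mathcal D$ carry equal ratios). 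Hence $\{\deg h_i,\deg h_{i+1}\}=\{\deg\hat h_i,\deg\hat h_{i+1}\}$ at every step, and transitivity gives $m=n$ with $(\deg f_i)$ a permutation of $(\deg g_i)$.

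The main obstacle is the reverse direction of the dictionary in the first step — proving that every intermediate field of $K(x)/K(t)$ actually arises from a bona fide polynomial decomposition — together with the correct bookkeeping of the ramification at infinity, which both justifies this and supplies the $N$-cycle; this is precisely the point at which the hypothesis $\charp(K)\nmid\deg f$ is indispensable. Once the combinatorial picture (a sublattice of the divisor lattice of $N$) is in place, the remainder is the standard modular-lattice argument.
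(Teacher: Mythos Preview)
Your approach is essentially the paper's: the L\"uroth/Galois dictionary (Lemma~\ref{Decr.Incr}), the transitive cyclic subgroup coming from tame ramification at infinity (Lemma~\ref{cyclic}), the transfer to subgroups of that cyclic group (Lemma~\ref{G1}), and a Jordan--H\"older argument on the resulting lattice. Two small remarks: first, the polynomial form of L\"uroth (Remark~\ref{Poly}) does not actually need $\charp(K)\nmid N$, so the hypothesis is used only to produce the $N$-cycle, not to set up the field dictionary; second, the paper derives the degree-preservation clause $\{\deg h_i,\deg h_{i+1}\}=\{\deg\hat h_i,\deg\hat h_{i+1}\}$ from the separate Lemma~\ref{EQUIV}~ii), whereas you read it off directly from the divisor-lattice identities $b'/a=c/b$ and $c/b'=b/a$ --- a slightly more self-contained route for this particular conclusion.
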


Theorem~\ref{First} is known as Ritt's First Theorem, and it was first proved by Ritt~\cite{R22} for $K=\C$. A different proof was given by Engstrom \cite[Thm.~4.1]{E41} in case $K$ is an arbitrary field of characteristic zero. His proof extends at once to polynomials over any field with $\charp(K)\nmid\deg(f)$. See also \cite{B95, LN73, S00, ZM}. 

In the same paper where Theorem~\ref{First} appeared, Ritt solved (for $K=\C$) the equation
\begin{equation}\label{secondeq} 
h_i\circ h_{i+1}=\hat h_i\circ \hat h_{i+1},
\end{equation}
assuming
\begin{equation}\label{secondcondition}
\deg h_i=\deg \hat h_{i+1}\ \& \ \gcd(\deg h_i,\deg \hat h_i)=1.
\end{equation}
Solving \eqref{secondeq} assuming \eqref{secondcondition} generalizes solving \eqref{secondeq} in indecomposable polynomials (which is the problem that arises from Theorem~\ref{First}). In the sequel we explain why that is so. The trivial solutions of \eqref{secondeq} are
\[
\hat h_i=h_i\circ\ell, \quad \hat h_{i+1}=\ell^{(-1)}\circ h_{i+1},
\]
where $\ell\in \C[x]$ is a linear polynomial. Here $\ell^{\langle-1\rangle}(x)$ denotes the inverse of $\ell(x)$ with respect to functional composition (which clearly exists exactly when $\ell(x)$ is a linear polynomial). 

In fact, for $f\in K[x]$ with $\deg f>1$, we say that two decompositions $f=f_1\circ \dots \circ f_m$ and $f=g_1\circ \dots \circ g_n$ of $f(x)$ are \emph{equivalent} if $m=n$ and there are degree-one $\mu_1,\dots,\mu_{m-1}\in K[x]$ such that $f_{i}\circ \mu_i=g_{i}$ and $\mu_i^{(-1)}\circ f_{i+1}=g_{i+1}$, $i=1, 2, \ldots, m-1$. Thus the trivial solutions of  \eqref{secondeq} are those where $h_i\circ h_{i+1}$ and $\hat h_i\circ \hat h_{i+1}$ are equivalent decompositions.

When proving Theorem~\ref{First} Ritt noticed that any nontrivial solution of \eqref{secondeq} in indecomposable polynomials satisfies \eqref{secondcondition}. In fact, the following theorem, proved by Ritt ~\cite{R22} for the case $K=\C$, holds. 

\begin{lemma}\label{EQUIV}
Let $K$ be a field and let $h_i, h_{i+1}, \hat h_i, \hat h_{i+1}\in K[x]$ be such that $h_i\circ h_{i+1}=\hat h_i\circ \hat h_{i+1}=f$ and $\charp(K)\nmid \deg f$. Then the following holds.
\begin{itemize}
\item[i)] If $\deg h_i = \deg \hat h_i$, and hence $\deg h_{i+1}=\deg \hat h_{i+1}$, then $h_i\circ h_{i+1}$ and $\hat h_i\circ \hat h_{i+1}$ are equivalent decompositions.
\item[ii)] If $h_i, h_{i+1}, \hat h_i, \hat h_{i+1}$ are indecomposable \textup{(}over $K$\textup{)} then either $\deg h_i = \deg \hat h_i$ and $\deg h_{i+1}=\deg \hat h_{i+1}$ or \eqref{secondcondition} holds.
\end{itemize} 
\end{lemma}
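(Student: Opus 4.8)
The plan is to work inside the standard Galois correspondence for polynomial decompositions (see \cite[Chap.~1]{S00}, \cite{ZM}). Put $n=\deg f$, pick $t$ transcendental over $K$ and $x$ with $f(x)=t$, let $M$ be a splitting field of $f(X)-t$ over $K(t)$, and set $G=\Gal(M/K(t))$, acting faithfully and transitively on the $n$ roots, with $H\le G$ the stabilizer of $x$. Every intermediate field $K(t)\subseteq L\subseteq K(x)$ has the form $L=K(p(x))$ with $p\in K[x]$ (L\"uroth, together with the fact that, $f$ being a polynomial, the pole of $x$ is the only place of $K(x)$ over the pole of $t$), and writings $f=g\circ h$ up to equivalence correspond bijectively to such fields $L$ (take $h=p$), hence to subgroups $H\le U\le G$, hence to $G$-invariant partitions of the set of roots; under this dictionary $\deg g=[L:K(t)]=[G:U]$ (the number of blocks) and $\deg h=[K(x):L]=[U:H]$. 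Write $L,U$ for the data attached to $h_i\circ h_{i+1}$ and $\hat L,\hat U$ for those attached to $\hat h_i\circ\hat h_{i+1}$.

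The one genuinely geometric ingredient --- and the step I expect to be the crux --- is that, $f$ being a polynomial with $\charp(K)\nmid n$, the place at infinity is totally and tamely ramified in $K(x)/K(t)$, so $G$ contains an element $\sigma$ acting as an $n$-cycle. Then every $G$-invariant partition is $\sigma$-invariant, and identifying the roots with $\Z/n\Z$ so that $\sigma$ is translation by $1$, the $\sigma$-invariant partitions are precisely the $n$ partitions $\P_d$, $d\mid n$, where $\P_d$ has $d$ blocks, namely the cosets of the subgroup of order $n/d$. In particular there is at most one $G$-invariant partition with a prescribed number of blocks, and forming common refinements / common coarsenings of these partitions corresponds, via the Galois correspondence, to the facts that the compositum $L\hat L$ has degree $\lcm(\deg h_i,\deg\hat h_i)$ over $K(t)$ and the intersection $L\cap\hat L$ has degree $\gcd(\deg h_i,\deg\hat h_i)$ over $K(t)$.

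For part i): $\deg h_i=\deg\hat h_i$ means the partitions attached to $L$ and $\hat L$ have the same number of blocks, hence coincide, so $K(h_{i+1}(x))=K(\hat h_{i+1}(x))$. Two polynomials generating the same subfield of $K(x)$ over $K$ differ by a degree-one polynomial on the left, so $\hat h_{i+1}=\mu^{\langle-1\rangle}\circ h_{i+1}$ for some linear $\mu\in K[x]$; cancelling $h_{i+1}$ on the right in $h_i\circ h_{i+1}=\hat h_i\circ\hat h_{i+1}$ gives $\hat h_i=h_i\circ\mu$, so the two decompositions are equivalent.

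For part ii): indecomposability of $h_i$ and $h_{i+1}$ says exactly that $U$ is maximal in $G$ and $H$ is maximal in $U$; likewise for $\hat U$. Put $D=U\cap\hat U$. Since $H\le D\le U$ and $H$ is maximal in $U$, either $D=U$ or $D=H$. If $D=U$ then $U\subseteq\hat U\subsetneq G$, so maximality of $U$ forces $U=\hat U$, whence $\deg h_i=[G:U]=[G:\hat U]=\deg\hat h_i$ and $\deg h_{i+1}=\deg\hat h_{i+1}$: the first alternative. If $D=H$, then $L\hat L$ corresponds to $U\cap\hat U=H$, i.e.\ $L\hat L=K(x)$, so $\lcm(\deg h_i,\deg\hat h_i)=[K(x):K(t)]=n$; on the other hand $L\cap\hat L$ is a field between $K(t)$ and $L$ of degree $\gcd(\deg h_i,\deg\hat h_i)$ over $K(t)$, so indecomposability of $h_i$ forces this $\gcd$ to be $1$ or $\deg h_i$, and the value $\deg h_i$ would give $L\subseteq\hat L$, hence $L=\hat L$ by indecomposability of $\hat h_i$, hence $U=\hat U=D=H$, contradicting $\deg h_{i+1}>1$. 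Therefore $\gcd(\deg h_i,\deg\hat h_i)=1$, and together with $\lcm(\deg h_i,\deg\hat h_i)=n$ this yields $\deg h_i\cdot\deg\hat h_i=n$, i.e.\ $\deg h_i=\deg\hat h_{i+1}$; thus \eqref{secondcondition} holds.
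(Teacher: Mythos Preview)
Your argument is correct and is precisely the Galois-theoretic route the paper sets up in Section~\ref{SecFirst} via Lemma~\ref{Decr.Incr}, Lemma~\ref{G1} and Lemma~\ref{cyclic}; the paper itself does not write out a proof of Lemma~\ref{EQUIV} but points to \cite{L42} and to the modernized Ritt argument in \cite{ZM}, and your proof is essentially the latter. One cosmetic slip: the $\sigma$-invariant partitions $\P_d$ are indexed by the divisors $d\mid n$, so there are $\tau(n)$ of them rather than $n$, but this does not affect anything, since all you use is that distinct $d$'s give distinct block counts. It may also be worth noting that Levi's original proof in \cite{L42} is more elementary (direct comparison of coefficients, no monodromy), whereas your approach, like \cite{ZM}, trades that computation for the structural fact that the lattice of intermediate fields embeds in the divisor lattice of $n$ via Lemma~\ref{G1} applied to the cyclic subgroup of Lemma~\ref{cyclic}.
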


Lemma~\ref{EQUIV} will also be of importance in Section~\ref{SecSecond}. Levi~\cite{L42} proved it for a field $K$ with $\charp(K)=0$ and his proof extends at once to arbitrary field $K$. Find a modernized version of Ritt's proof  in \cite{ZM}.

By Lemma~\ref{EQUIV} it follows that to completely solve the equation \eqref{secondeq} in indecomposables we may henceforth assume \eqref{secondcondition}. Ritt solved \eqref{secondeq} assuming \eqref{secondcondition} and without assuming indecomposability (for $K=\C$). This result is known in the literature as Ritt's Second Theorem. Zannier~\cite{Z93} proved Ritt's Second Theorem for arbitrary field $K$. The theorem follows. 

\begin{theorem}\label{Second}
Let $K$ be a field and assume that $h_i, h_{i+1}, \hat h_i,\hat h_{i+1}\in K[x]$ of degree greater than $1$ satisfy \eqref{secondeq} and \eqref{secondcondition}, and that $h_i'(x) \hat h_i'(x)\neq 0$. Let $m=\deg h_i$ and $n=\deg \hat h_i$, and assume without loss of generality that $m>n$. Then there exist linear polynomials $\ell_1, \ell_2, \mu_1, \mu_2\in K[x]$ such that one of the following holds:
\begin{align*}
\ell_1\circ h_i \circ \mu_1&=x^rP^n(x), \quad \ell_1\circ \hat h_i\circ \mu_2=x^n\\
\mu_1^{(-1)}\circ h_{i+1} \circ \ell_2&=x^n , \quad \mu_2^{(-1)}\circ \hat h_{i+1}\circ \ell_2=x^rP(x^n)
\end{align*}
where $P(x)\in K[x]$ is such that $r=m-n\deg P\geq 0$, or
\begin{align*}
\ell_1\circ h_i \circ \mu_1&=D_m(x, a^n), \quad \ell_1\circ \hat h_i\circ \mu_2=D_n(x, a^m)\\
\mu_1^{(-1)}\circ h_{i+1} \circ \ell_2&=D_n(x, a), \quad \mu_2^{(-1)}\circ \hat h_{i+1}\circ \ell_2=D_m(x, a),
\end{align*}
where $D_m(x,a)$ is is the $m$-th Dickson polynomials with parameter $a\in K$ defined by the functional equation 
\begin{equation}\label{dickson}
D_{m}\left (x+\frac{a}{x}, a\right) = x^{m} + \left(\frac{a}{x}\right)^{m}.
\end{equation}
\end{theorem}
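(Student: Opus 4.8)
The plan is to follow the classical Ritt argument, recast in the Galois-theoretic / field-theoretic language set up earlier in the paper. Let $t$ be transcendental over $K$, set $f = h_i \circ h_{i+1} = \hat h_i \circ \hat h_{i+1}$, and work inside a splitting field $L$ of $f(x) - t$ over $K(t)$. Write $K(x) \supseteq K(h_{i+1}(x)) \supseteq K(f(x)) = K(t)$ and likewise $K(x)\supseteq K(\hat h_{i+1}(x)) \supseteq K(t)$; the degrees are $n = \deg h_{i+1}$ over the first intermediate field, $m = \deg h_i$ between it and $K(t)$, and symmetrically with $m,n$ swapped for the hatted tower. Under the coprimality hypothesis $\gcd(m,n)=1$ from \eqref{secondcondition}, the two chains are "independent'': the compositum of the two intermediate fields $K(h_{i+1}(x))$ and $K(\hat h_{i+1}(x))$ equals $K(x)$, and the first step is to make this precise using the separability provided by $\charp(K)\nmid \deg f$ together with the condition $h_i'\hat h_i'\neq 0$, so that the relevant extensions are separable and the degrees multiply.

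Next I would pass to the Galois closure and analyze the monodromy group $G = \Mon(f) = \Gal(L/K(t))$ together with the point stabilizer $G_1$ of a root of $f(x)-t$. Decompositions of $f$ correspond to groups strictly between $G_1$ and $G$, by the Galois correspondence (Lüroth plus the standard dictionary explained in Section~\ref{SecFirst}). The two decompositions give two such intermediate groups $U = G_{h_{i+1}}$ and $V = G_{\hat h_{i+1}}$ with $[G:U] = m$, $[G:V] = n$, and the coprimality forces $G = UV$ and $U \cap V = G_1$; from here one reads off that $G$ acts on the cosets of $U\cap V$, i.e. on an $m\times n$ "grid'', and that this action is a product action. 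A group-theoretic analysis — this is essentially the heart of Ritt's Second Theorem — then shows that, after accounting for the imprimitivity systems, the monodromy group is either cyclic/dihedral-like (the Dickson case) or $G_1$ fixes one coordinate projection in a way that forces one of the two factors to be $x^n$ up to linear changes (the first, $x^r P(x)^n$ case). I would invoke the classification of transitive groups with this grid structure, or alternatively argue directly with the ramification data over $t = \infty$ (a polynomial has a totally ramified point there), which constrains the cycle types of inertia generators and pins down the two alternatives.

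Having identified the group-theoretic shape, the final step is to translate back to an explicit normal form for the polynomials. In the Dickson case one uses the functional equation \eqref{dickson} and the standard fact that $D_m(D_n(x,a),a^n) = D_{mn}(x,a) = D_n(D_m(x,a),a^m)$, which produces precisely the claimed identities after composing with suitable linear $\ell_1,\ell_2,\mu_1,\mu_2$ to normalize; the ramification structure over $\infty$ and over the two finite branch points $\pm 2\sqrt{a}$ forces $h_i,\hat h_i$ to be linearly equivalent to Dickson polynomials with the indicated parameters. In the monomial case, the totally ramified behaviour over both $0$ and $\infty$ forces $\hat h_i$ to be $x^n$ up to linear equivalence, and then the identity $x^r P(x)^n \circ x^n = x^{rn}P(x^n)^n$ matched against $x^n \circ x^r P(x^n)$ yields the stated forms, with $r = m - n\deg P \geq 0$ coming from the degree bookkeeping. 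The main obstacle is squarely the group-theoretic classification in the second paragraph: extracting, from the bare facts $G = UV$, $U\cap V = G_1$, $\gcd([G:U],[G:V])=1$, plus the inertia/ramification constraints of a polynomial covering, that only the two families can occur. This is where one must either reproduce Ritt's delicate case analysis or cite the structure theory of such "product'' monodromy groups; the separability bookkeeping (needed because we are over an arbitrary $K$ with only $\charp(K)\nmid\deg f$ and $h_i'\hat h_i'\neq 0$, as in Zannier's treatment) is routine by comparison but must be handled carefully so that nothing degenerates in small characteristic.
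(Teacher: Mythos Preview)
The paper does not give a self-contained proof of this theorem; it only indicates the strategy and defers to the literature. That strategy is \emph{geometric}: from $h_i\circ h_{i+1}=\hat h_i\circ\hat h_{i+1}$ and $\gcd(m,n)=1$ one shows that the affine curve $h_i(x)=\hat h_{i+1}(y)$ is irreducible and admits a rational parametrization, hence has genus zero; the proof then ``amounts to finding all genus-zero curves of the form $h_i(x)=\hat h_{i+1}(y)$ where $h_i,\hat h_{i+1}\in K[x]$ are of coprime degrees, and thus to genus computation'' (i.e.\ a Riemann--Hurwitz count that pins down the ramification and hence the polynomials). Your route is organized differently --- via the monodromy group, the lattice $G\supset U,V\supset G_1$ with $G=UV$, $U\cap V=G_1$, and a ``grid'' action --- and never isolates this curve or the genus constraint.

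There is a genuine gap at your pivotal step. From the bare group-theoretic data $G=UV$, $U\cap V=G_1$, $\gcd([G{:}U],[G{:}V])=1$, together with the existence of a transitive cyclic inertia subgroup at $\infty$, one \emph{cannot} conclude that the monodromy is cyclic, dihedral, or of the monomial shape: many primitive and imprimitive groups admit such product structures. What actually forces the dichotomy is the genus-zero condition on the associated cover (equivalently, on the curve above), which via Riemann--Hurwitz bounds the total ramification contribution and leaves only the two families. In your outline this constraint is mentioned only as a secondary ``alternative'' (``argue directly with the ramification data''), whereas in every known proof it is the core computation; the paper says as much when it remarks that, unlike the First Theorem, the Second cannot be reduced to the subgroup combinatorics used in Section~\ref{SecFirst}. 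A correct proof along your lines would have to insert the Riemann--Hurwitz calculation (or an equivalent genus argument) \emph{before} the classification, not after. Note also that the theorem is stated under the sole hypothesis $h_i'\hat h_i'\neq 0$ (Zannier's generality), not $\charp(K)\nmid\deg f$; your setup invokes the latter when appealing to the transitive cyclic subgroup, so even the framing would need adjustment in small characteristic.
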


Several authors rewrote Ritt's proof of Theorem~\ref{Second} in languages different from Ritt's (usually assuming  indecomposability of polynomials, see \cite{F952, DW74,  LN73, L42,M95, Z93, ZM}). Find different proofs of Ritt's Second Theorem in \cite{BT00, S82, T88}.

In the sequel we present a Galois-theoretic framework developed by Ritt~\cite{R22} for addressing decompositions questions. The following well known result  provides a dictionary between decompositions of $f\in K[x]$ and fields between $K(x)$ and $K(f(x))$, which then correspond to groups between the two associated Galois groups.  

\begin{theorem}[L\"uroth's theorem]\label{Luroth}
Let $K$ and $L$ be fields such that $K\subsetneq L \subseteq K(x)$, with $x$ transcendental over $K$. Then $L=K(f)$ for some $f\in K(x)$.
\end{theorem}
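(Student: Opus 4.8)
The plan is to take $f\in K(x)\setminus K$ of minimal degree among elements of $L$, where the degree of a rational function $p/q$ in lowest terms is $\max(\deg p,\deg q)$, and show $L=K(f)$. The starting point is that $x$ is algebraic over $K(f)$: indeed, writing $f=p(x)/q(x)$ in lowest terms, the polynomial $\Phi(T)=p(T)-f\cdot q(T)\in K(f)[T]$ has $x$ as a root and is not the zero polynomial. So $[K(x):K(f)]$ is finite, and since $K(f)\subseteq L\subseteq K(x)$, the degree $[K(x):L]$ is finite as well. First I would record the classical fact that $[K(x):K(f)]=\deg f$ (the degree of $f$ as defined above); this follows from a Gauss-lemma argument showing that $\Phi(T)$, suitably normalized, is irreducible over $K(f)$, hence is (a scalar multiple of) the minimal polynomial of $x$ over $K(f)$.

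Next I would let $g(T)\in L[T]$ be the minimal polynomial of $x$ over $L$, say of degree $n=[K(x):L]$, and clear denominators: the coefficients of $g$ lie in $L\subseteq K(x)$, so after multiplying by a common denominator $c(x)\in K[x]$ we obtain a primitive polynomial $G(x,T)\in K[x][T]$ (primitive as a polynomial in $T$ over $K[x]$) whose coefficients, viewed in $K(x)$, are proportional to those of $g$. The key observation is that at least one coefficient of $g$, as an element of $K(x)$, is a nonconstant rational function — call it $\vartheta$ — since otherwise all coefficients of $g$ would lie in $K$, forcing $x$ to be algebraic over $K$, a contradiction. This $\vartheta\in L$ has degree at most $n$ as a rational function (its numerator and denominator divide $c(x)$ and the relevant coefficient of $G$, whose $x$-degrees I will bound by $n$ via the primitivity/minimality setup). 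The heart of the argument is then to prove $\deg\vartheta=n$, which by minimality of $f$ gives $\deg f\le \deg\vartheta\le n=[K(x):L]\le[K(x):K(f)]=\deg f$, forcing equality throughout and hence $L=K(f)$.

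To prove $\deg\vartheta = n$ I would compare $G(x,T)$ with $\Phi(T)$. Since $g(T)$ divides the minimal polynomial consideration of $x$ over $L\supseteq K(\vartheta)$ and $x$ is a root of the numerator of $\vartheta(T)-\vartheta$ (a polynomial over $K(\vartheta)$ of degree $\deg\vartheta$), we get that $g(T)$ divides this polynomial in $L[T]$; clearing denominators, $G(x,T)$ divides $N(T)-\vartheta(x)\,D(T)$ in $K[x,T]$ up to content, where $\vartheta=N/D$. Comparing $x$-degrees on both sides, and using that $G$ is primitive in $T$ over $K[x]$ so its $T$-content contributes no spurious $x$-factor, yields $\deg_x G(x,T)\le\deg\vartheta$. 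On the other hand a symmetry argument — swapping the roles of $x$ and $T$, i.e. using that $G(x,T)$ is also, up to content, divisible on the other side, together with $\deg_x G = \deg_T G = n$ which comes from $G$ being (a primitive model of) the degree-$n$ minimal polynomial — gives $\deg_x G(x,T)\ge n$. Hence $n\le\deg_x G\le\deg\vartheta\le n$, so $\deg\vartheta=n$ and we are done. The main obstacle is the bookkeeping in this last step: one must carefully track contents when passing between $L[T]$ and $K[x,T]$ (a Gauss-lemma argument), and correctly argue that $\deg_x G(x,T)=\deg_T G(x,T)=n$, for which the cleanest route is to observe that $G(x,T)$ is irreducible in $K(x)[T]$ (being a constant multiple of the minimal polynomial of $x$ over $L$, once one knows $L=K(\vartheta)$ a posteriori — or, to avoid circularity, to run the degree inequalities first and deduce irreducibility and $L=K(f)$ together at the end).
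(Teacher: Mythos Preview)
The paper does not give its own proof of L\"uroth's theorem; it only cites Schinzel's book and Zieve's notes for the elementary argument based on Gauss's lemma. Your outline is exactly that classical argument, so in spirit you are aligned with what the paper invokes.

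However, your final paragraph has a genuine gap. You correctly obtain two inequalities: writing $m=\deg_x G$, $n=\deg_T G=[K(x):L]$ and $d=\deg\vartheta$, you get $d\le m$ (since $\vartheta$ is the ratio of two $x$-coefficients of $G$) and $m\le d$ (from $G\mid q(x)p(T)-p(x)q(T)$ in $K[x,T]$ via Gauss, comparing $x$-degrees). Hence $m=d$. But then your chain $n\le m\le d\le n$ requires both $n\le m$ and $d\le n$, neither of which you have established. The ``symmetry argument'' you sketch for $n\le m$ does not work: the antisymmetry of $q(x)p(T)-p(x)q(T)$ under $x\leftrightarrow T$ tells you nothing direct about $\deg_x G$, and invoking $\deg_x G=\deg_T G=n$ is precisely what remains to be proved. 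Likewise the claim $d\le n$ in your second paragraph (``whose $x$-degrees I will bound by $n$ via the primitivity/minimality setup'') is never carried out.

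The clean way to close the argument is a \emph{second} content computation, this time in the other variable. From $m=d$ the cofactor $H$ in $q(x)p(T)-p(x)q(T)=G(x,T)\,H(x,T)$ has $\deg_x H=0$, so $H\in K[T]$. Now view $q(x)p(T)-p(x)q(T)$ as a polynomial in $x$ over $K[T]$: its $x$-coefficients are $K$-linear combinations of the coprime pair $p(T),q(T)$, hence it is primitive in $x$ over $K[T]$. Therefore $H\in K^{\times}$, so $G$ and $q(x)p(T)-p(x)q(T)$ are associates in $K[x,T]$, and comparing $T$-degrees gives $n=d$ directly. Since $K(\vartheta)\subseteq L$ and $[K(x):K(\vartheta)]=d=n=[K(x):L]$, you conclude $L=K(\vartheta)$. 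Note that once you have this, your preliminary choice of a minimal-degree $f$ is unnecessary: the coefficient $\vartheta$ itself already generates $L$.
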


Theorem~\ref{Luroth} in case $K=\C$ was proved by L\"uroth in 1876, and as stated by Steinitz in 1910. For the elementary proof of L\"uroth's theorem, which relies only on Gauss's lemma on irreducibility of polynomials in $(K(x))[y]$, see \cite[Thm.\@~2.1]{Zieve} or \cite[Thm.\@~2, Sec.\@~1.1]{S00}.

\begin{remark}\label{Poly}
It is well known that if a field $L$ is as in Theorem~\ref{Luroth} and contains a nonconstant polynomial, then $f(x)$ such that $L=K(f)$ can be chosen to be a polynomial.  Find a simple proof in \cite[Thm.\@~4, Sec.\@~1.1]{S00}. See also \cite[Lemma~3.5]{Zieve}.
\end{remark}

In translations to group-theoretic questions, the relevant Galois group associated to $f(x)$ is not the Galois group of $f(x)$, but is defined as follows.

\begin{definition}\label{mon}
Let $K$ be a field. Given $f\in K[x]$ with $f'(x)\neq 0$ the \emph{monodromy group} $\Mon(f)$ is the Galois group of $f(x)-t$ over the field $K(t)$ viewed as a group of permutations of the roots of $f(x)-t$.
\end{definition}

By the Gauss's lemma  on irreducibility of polynomials it follows that the polynomial $f(x)-t$ from Definition~\ref{mon} is irreducible over $K(t)$. Indeed, $f(x)-t$ has degree one in $t$, and is hence irreducible in $K[x][t]$, which we can rewrite as $K[t][x]$. Then by the Gauss's lemma it follows that that $f(x)-t$ is irreducible in $(K(t))[x]$. Since $f'(x)\neq 0$, $f(x)-t$ is also separable. Thus $\Mon(f)$ is the Galois group of the Galois closure of $K(x)/K(f(x))$, viewed as a permutation group on the conjugates of $x$ over $K(f(x))$. Note that also $[K(x):K(f(x))]=\deg f$ since the minimal polynomial of $x$ over $K(f(x))$ is $f(X)-f(x)$.

Recall that the Galois group of an irreducible polynomial acts transitively on the set of roots of the polynomial. The monodromy group of $f(x)$ is thus a transitive permutation group. For a reminder on transitive group actions see \cite{KC}. 
The following two lemmas reduce the study of decompositions of $f(x)\in K[x]$  to the study of subgroups of any transitive subgroup of the monodromy group of $f(x)$. As we shall see in Lemma~\ref{cyclic},  for $f\in K[x]$ with $\textnormal{char}(K)\nmid \deg f$ (which is the condition in Theorem~\ref{First}) there exists a transitive cyclic subgroup of $\Mon(f)$.

\begin{lemma}\label{Decr.Incr}
Let $K$ be a field and let $f(x)\in K[x]$ be such that $f'(x)\neq 0$. Let $G$ be the monodromy group of $f(x)$ and let $H$ be a one-point stabilizer in $G$. There are bijections between the following sets:
\begin{enumerate}
\item[(1)] the set of equivalence classes of decompositions of $f(x)$,
\item[(2)] the set of increasing chains of fields between $K(f(x))$ and $K(x)$,
\item[(3)] the set of decreasing chains of groups between $G$ and $H$,
\end{enumerate}
such that the degrees of the polynomials in the decomposition in \emph{(1)}
equal the indices between successive groups in the corresponding chain in \emph{(3)}.
\end{lemma}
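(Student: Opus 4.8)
The plan is to establish the two bijections between (2) and (3) and between (1) and (2) separately, and then observe that composing them yields the degree/index correspondence.

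First I would set up the correspondence between (2) and (3). Since $f'(x)\neq 0$, the extension $K(x)/K(f(x))$ is finite and separable of degree $\deg f$ (as noted in the excerpt, via the minimal polynomial $f(X)-f(x)$), so we may pass to its Galois closure $N$, whose Galois group is $G=\Mon(f)$ acting on the conjugates of $x$; the subgroup fixing $x$ is the one-point stabilizer $H$, and $K(x)=N^H$, $K(f(x))=N^G$. The fundamental theorem of Galois theory gives an inclusion-reversing bijection between subfields of $N$ containing $K(f(x))$ and subgroups of $G$, and this restricts to a bijection between fields $L$ with $K(f(x))\subseteq L\subseteq K(x)$ and groups $U$ with $H\subseteq U\subseteq G$. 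This bijection is inclusion-reversing, so increasing chains $K(f(x))=L_0\subseteq L_1\subseteq\cdots\subseteq L_m=K(x)$ correspond to decreasing chains $G=U_0\supseteq U_1\supseteq\cdots\supseteq U_m=H$, with $[L_i:L_{i-1}]=[U_{i-1}:U_i]$. This step is routine Galois theory.

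Next I would establish the correspondence between (1) and (2), which is the heart of the matter. Given a decomposition $f=f_1\circ f_2\circ\cdots\circ f_m$, set $g_i = f_1\circ f_2\circ\cdots\circ f_i$ (with $g_0=x$, $g_m=f$) and let $L_i = K(g_i(x))$; then $K(f(x))=L_m\subseteq L_{m-1}\subseteq\cdots\subseteq L_0=K(x)$ reversed gives an increasing chain, and $[L_{i-1}:L_i]=\deg f_i$ since the minimal polynomial of $g_{i-1}(x)$ over $K(g_i(x))=K(f_i(g_{i-1}(x)))$ is $f_i(X)-f_i(g_{i-1}(x))$. One checks that equivalent decompositions (differing by linear polynomials $\mu_i$, which satisfy $K(g_i\circ\mu_i) = K(g_i)$ since $\mu_i$ is invertible) give the same chain of fields. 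Conversely, given an increasing chain of fields between $K(f(x))$ and $K(x)$, each field in the chain contains the nonconstant polynomial $f(x)$, so by Remark~\ref{Poly} (L\"uroth's theorem plus the polynomial refinement) each $L_i$ equals $K(p_i(x))$ for some polynomial $p_i\in K[x]$; from the inclusions $K(p_i(x))\subseteq K(p_{i-1}(x))$ one deduces $p_i = q_i\circ p_{i-1}$ for some $q_i\in K[x]$, and reading the chain from $K(x)$ down to $K(f(x))$ one recovers a decomposition of $f$ into the $q_i$. The fact that this is well-defined up to equivalence uses that $p_i$ is determined by $L_i$ only up to a linear substitution, which is exactly the linear ambiguity in the definition of equivalence.

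I expect the main obstacle to be verifying carefully that these two constructions are mutually inverse at the level of equivalence classes — in particular, that the linear indeterminacy in the L\"uroth generators $p_i$ matches precisely the equivalence relation on decompositions, so that distinct equivalence classes yield distinct chains and every chain comes from a decomposition. The degree/index claim then follows immediately: under (1)$\leftrightarrow$(2) the degree of $f_i$ is $[L_{i-1}:L_i]$, and under (2)$\leftrightarrow$(3) this index equals $[U_i:U_{i-1}]$, the index between successive groups in the decreasing chain.
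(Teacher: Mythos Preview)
Your overall strategy matches the paper's exactly: establish (1)$\leftrightarrow$(2) via L\"uroth's theorem together with Remark~\ref{Poly}, and (2)$\leftrightarrow$(3) via the Galois correspondence for the splitting field of $f(x)-t$ over $K(t)$. However, the construction in your (1)$\to$(2) step contains a genuine error. With $g_i = f_1\circ\cdots\circ f_i$ (left prefixes) one has $g_i = g_{i-1}\circ f_i$, so $g_i(x) = g_{i-1}(f_i(x))$, \emph{not} $f_i(g_{i-1}(x))$ as you write; more seriously, the fields $K(g_i(x))$ are not nested between $K(f(x))$ and $K(x)$ at all. For instance, with $f_1(x)=x^2$ and $f_2(x)=x^2+x$ over a field of characteristic $\neq 2$, the field $K(g_1(x))=K(x^2)$ does not contain $f(x)=(x^2+x)^2=x^4+2x^3+x^2$.

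The correct association, which the paper uses, is via the \emph{right suffixes}: to $f=f_1\circ\cdots\circ f_m$ assign the chain
\[
K(x)\supset K(f_m(x))\supset K\bigl((f_{m-1}\circ f_m)(x)\bigr)\supset\cdots\supset K(f(x)),
\]
which does nest because $(f_i\circ\cdots\circ f_m)(x)=f_i\bigl((f_{i+1}\circ\cdots\circ f_m)(x)\bigr)$ lies in $K\bigl((f_{i+1}\circ\cdots\circ f_m)(x)\bigr)$. With this correction your minimal-polynomial computation, the equivalence argument, and the degree/index bookkeeping all go through, and your proof then coincides with the paper's.
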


\begin{lemma} \label{G1}
Let $G$ be a transitive permutation group, let $H$ be a one-point stabilizer, and let $I$ be a transitive subgroup of $G$.  Then the map $\rho\colon U\mapsto U\cap I$ is a bijection from the set of groups between
$G$ and $H$  to the set of groups $J$ between $I$ and $H\cap I$ for which $JH=HJ$.
Moreover, $[G:U]=[I:U\cap I]$ and  $\rho(\langle U,V\rangle)=\langle\rho(U),\rho(V)\rangle$
and $\rho(U\cap V)=\rho(U)\cap\rho(V)$ for any groups $U,V$ between $H$ and $G$.
\end{lemma}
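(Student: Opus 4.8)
The plan is to prove Lemma~\ref{G1} by a direct group-theoretic argument, exploiting the correspondence between subgroups containing a one-point stabilizer and blocks of imprimitivity, together with the double-coset/orbit bookkeeping that comes from $I$ being transitive. First I would fix notation: let $\Omega$ be the set on which $G$ acts transitively, let $H = G_\alpha$ be the stabilizer of a point $\alpha\in\Omega$, and recall the standard bijection between subgroups $U$ with $H\le U\le G$ and the block systems of $\Omega$ containing the block $\{\alpha\}$'s block $\alpha^U$; equivalently, $U\mapsto \alpha^U$ is injective on such subgroups and $U=G_{\alpha^U}$ (the setwise stabilizer of that block). The key observation is that since $I$ is transitive on $\Omega$, we have $G = IH$ (because $I$ already reaches every point, any $g\in G$ can be written $g = i h$ with $i\in I$ sending $\alpha$ to $g^{-1}\alpha$... more precisely $\alpha^{ig}=\alpha$ for suitable $i$), and more generally for any $U$ with $H\le U\le G$ one gets $U = (U\cap I)H$, because $I$ is transitive on the block $\alpha^U$.

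Next I would show $\rho$ lands in the claimed target and is injective. Given $U$ with $H\le U\le G$, set $J = U\cap I$. Then $H\cap I \le J \le I$, and $J$ is transitive on $\alpha^U$ (this is exactly the statement $U=(U\cap I)H$ combined with transitivity of $U$ on $\alpha^U$), so $\alpha^J = \alpha^U$. From $U=(U\cap I)H = JH$ one reads off both that $JH$ is a group, hence $JH=HJ$, and the index equality $[G:U] = [I:J]$: indeed $[I:J] = [I : I\cap H \cdot(\text{stab in }I\text{ of the block})]$... cleaner is to count points: $[G:U] = |\alpha^U|$ and $[I:J] = |\alpha^J| = |\alpha^U|$ since $J$ is the setwise stabilizer in $I$ of the block $\alpha^U$ and $I$ is transitive on $\Omega$. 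Injectivity of $\rho$: if $U\cap I = U'\cap I$ then $\alpha^U = \alpha^{U\cap I} = \alpha^{U'\cap I} = \alpha^{U'}$, and since $U=G_{\alpha^U}$, $U'=G_{\alpha^{U'}}$ we get $U=U'$.

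For surjectivity, take $J$ with $H\cap I\le J\le I$ and $JH=HJ$; then $U:=JH$ is a subgroup of $G$ containing $H$, and I must check $U\cap I = J$. The inclusion $J\subseteq U\cap I$ is clear. For the reverse, suppose $j h \in I$ with $j\in J\le I$, $h\in H$; then $h = j^{-1}(jh)\in I$, so $h\in H\cap I\le J$, whence $jh\in J$. This is where the hypothesis $H\cap I\le J$ is used, and it is the one genuinely delicate point — without it surjectivity fails. Finally, the lattice statements: $\rho(U\cap V) = \rho(U)\cap\rho(V)$ is immediate since $(U\cap V)\cap I = (U\cap I)\cap(V\cap I)$. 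For $\rho(\langle U,V\rangle) = \langle\rho(U),\rho(V)\rangle$, the inclusion $\supseteq$ is trivial; for $\subseteq$, note $\langle\rho(U),\rho(V)\rangle$ is a subgroup of $I$ containing $H\cap I$ whose image under the inverse of $\rho$ contains both $U$ and $V$ (since $\rho$ is an inclusion-preserving bijection with inclusion-preserving inverse), hence contains $\langle U,V\rangle$; applying $\rho$ and using that $\rho$ is a bijection gives the claim. The main obstacle is getting the surjectivity/well-definedness bookkeeping exactly right — specifically verifying $U\cap I = J$ and that $\rho$ preserves the partial order in both directions so the lattice identities follow formally; everything else is routine orbit-counting.
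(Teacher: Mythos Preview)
The paper itself omits the proof, citing \cite[Lemma~2.9]{KZ14} and \cite[Lemma~2.5, Cor.~2.6]{ZM}; your block-theoretic approach via the identity $U=(U\cap I)H$ is precisely the standard one and is essentially correct. One small slip: you write $[G:U]=|\alpha^{U}|$, but in fact $|\alpha^{U}|=[U:H]$; the correct count is that $[G:U]$ and $[I:J]$ both equal the number of blocks $|\Omega|/|\alpha^{U}|$ (using that $I$ acts transitively on $\Omega$ with point stabilizer $H\cap I$), so the conclusion $[G:U]=[I:U\cap I]$ stands.

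There is, however, one genuine gap, in your argument for $\rho(\langle U,V\rangle)=\langle\rho(U),\rho(V)\rangle$. You apply $\rho^{-1}$ to $J_0:=\langle\rho(U),\rho(V)\rangle$, but $\rho^{-1}$ is only defined on those $J$ with $JH=HJ$, and you have not verified that $J_0$ lies in this set. Without that, your order-isomorphism reasoning only shows that $\rho(\langle U,V\rangle)$ is the least element \emph{of the image of $\rho$} above $\rho(U)$ and $\rho(V)$, not that it coincides with their subgroup join inside $I$. The missing step is easy but must be said: if subgroups $A,B$ satisfy $AH=HA$ and $BH=HB$, then $\langle A,B\rangle H=H\langle A,B\rangle$, since any $h\in H$ can be commuted past a finite word in $A\cup B$ one letter at a time. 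Once $J_0H=HJ_0$ is known, $W:=J_0H$ is a subgroup containing both $U=(U\cap I)H$ and $V=(V\cap I)H$, hence $\langle U,V\rangle\le W$; your own surjectivity computation then gives $W\cap I=J_0$, whence $\rho(\langle U,V\rangle)\le J_0$, which together with the trivial inclusion finishes the argument.
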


We omit the proof of Lemma~\ref{G1}. A simple proof can be found in \cite[Lemma~2.9]{KZ14}. See also \cite[Lemma~2.5, Cor.\@~2.6]{ZM}. For the sake of completeness we include a proof of Lemma~\ref{Decr.Incr}. A version of the proof can be found in \cite[Lemma~2.2, Cor.\@~2.3]{ZM} and \cite[Lemma~2.8]{KZ14}.

\begin{proof}[Proof of Lemma~\ref{Decr.Incr}]
Let $f=f_1\circ f_2\circ \cdots\circ f_n$ be a decomposition of $f(x)$, where $f_i\in K[x]$ for all $i=1, 2, \ldots, n$.
Associate to this decomposition the chain of fields $K(x)\supset K(f_n)\supset K(f_{n-1}\circ f_n)\supset \dots \supset K(f_1\circ f_2\circ \cdots \circ f_n)=K(f)$. 
Note that for $h_i, h_j\in K[x]$ we have that $K(h_i)=K(h_j)$ if and only if there exists $\ell\in K[x]$ such that $h_i= \ell \circ h_j$. This together with Theorem~\ref{Luroth} and Remark~\ref{Poly} shows that the chosen association yields a bijection between (1) and (2). 

Let $L$ be the splitting field of $f(x)-t$ over $K(t)$, and let $y\in L$ be a root of $f(x)-t$, so that  $t=f(y)$. Since $f'(x)\neq 0$, $f(x)-t$ is separable, and $L$ is thus a Galois extension of $K(t)=K(f(y))$ and $G=\Gal(L/K(f(y)))$. Let $\tilde H=\Gal(L/K(y))$ be the stabilizer of $y$ in $G$. Clearly, $H$ and $\tilde H$ are conjugate subgroups of $G$. Then the Galois correspondence \cite[Thm.~VI.1.1]{L02} yields a bijection between (2) and (3). The same Galois correspondence together with $[K(f_{i-1}\circ \cdots \circ f_n):K(f_i\circ \cdots \circ f_n)]=\deg f_i$ yields the last statement.
\end{proof}

\begin{lemma}\label{cyclic}
If $f\in K[x]$ is such that $\charp(K)\nmid \deg f$, then the monodromy group of $f(x)$ contains a transitive cyclic subgroup. 
\end{lemma}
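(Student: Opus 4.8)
The plan is to extract a transitive cyclic subgroup from the ramification of the cover defined by $f(x)=t$ above the place at infinity of $K(t)$, which is totally and \emph{tamely} ramified precisely because $\charp(K)\nmid\deg f$. Writing $n=\deg f$ and $f(x)=a_nx^n+\cdots+a_0$ with $a_n\neq 0$, I would first pass to the completion $K((u))$ of $K(t)$ at $t=\infty$, where $u=1/t$, and substitute $x=1/Y$, turning $f(x)=t$ into $Y^n=u\,h(Y)$ with $h(Y)=a_n+a_{n-1}Y+\cdots+a_0Y^n$ and $h(0)=a_n\neq 0$. The crucial step is to solve this for $Y$ as a Puiseux series in $u$: putting $v=u^{1/n}$ and $Y=vW$ turns the relation into $W^n=h(vW)$; since $\charp(K)\nmid n$, the reduction $W^n-a_n$ modulo $(v)$ is separable and has a simple root $W_0\in K^{\mathrm{sep}}$ (an $n$-th root of $a_n$), so Hensel's lemma in the complete local ring $K^{\mathrm{sep}}[[v]]$ lifts $W_0$ to a solution $W\in K^{\mathrm{sep}}[[v]]$ with $W(0)=W_0$.

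Next I would set $X(v)=1/(vW(v))\in K^{\mathrm{sep}}((v))$, so that $f(X(v))=1/v^n$ identically. Using that $\charp(K)\nmid n$ also supplies a primitive $n$-th root of unity $\zeta\in K^{\mathrm{sep}}$, the $n$ series $X(v),X(\zeta v),\dots,X(\zeta^{n-1}v)$ are pairwise distinct (their leading terms $W_0^{-1}\zeta^{-j}v^{-1}$ differ) and all satisfy $f(X(\zeta^jv))=1/v^n$, so via the embedding $K(t)\hookrightarrow K((u))\hookrightarrow K^{\mathrm{sep}}((v))$ determined by $t\mapsto 1/v^n$ they are exactly the $n$ roots of $f(X)-t$ over $K^{\mathrm{sep}}((v))$. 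Since $f(x)-t$ is separable over $K(t)$ (its leading term gives $na_n\neq 0$, hence $f'(x)\neq0$), its splitting field $N$ over $K(t)$ is Galois with $\Gal(N/K(t))=\Mon(f)$, and the universal property of splitting fields then furnishes a $K(t)$-embedding $\iota\colon N\hookrightarrow K^{\mathrm{sep}}((v))$ sending the roots of $f(X)-t$ bijectively onto $\{X(\zeta^jv):0\le j\le n-1\}$.

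To finish, I would look at the field automorphism $\sigma$ of $K^{\mathrm{sep}}((v))$ that fixes $K^{\mathrm{sep}}$ and sends $v\mapsto\zeta v$. It fixes $K((v^n))=K((u))$ pointwise, hence the image of $K(t)$; and because its coefficients lie in $K^{\mathrm{sep}}$ it permutes the roots cyclically, $\sigma(X(\zeta^jv))=X(\zeta^{j+1}v)$. Thus $\sigma$ stabilizes $\iota(N)$, and $\tau:=\iota^{-1}\circ(\sigma|_{\iota(N)})\circ\iota$ lies in $\Mon(f)=\Gal(N/K(t))$ and acts on the $n$ roots of $f(X)-t$ as a single $n$-cycle, so $\langle\tau\rangle$ is the desired transitive cyclic subgroup.

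The hard part is the Hensel/Puiseux step in the first paragraph; in function-field language it is the statement that the place at infinity is totally and tamely ramified in $K(x)/K(t)$, so that it remains tamely ramified in the Galois closure and the corresponding inertia group is cyclic (indeed, the computation shows that a suitable inertia group is already transitive on the fibre). Working over $K^{\mathrm{sep}}$ rather than $\overline{K}$, and invoking $\charp(K)\nmid n$ both for the simple root of $W^n-a_n$ and for the root of unity $\zeta$, is what keeps the argument valid over an arbitrary field; the permutation bookkeeping at the end is routine once this local picture is set up.
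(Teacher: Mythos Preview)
Your argument is correct and essentially carries out in full detail one of the two approaches the paper only alludes to. The paper does not actually prove this lemma; it simply records that the desired transitive cyclic subgroup is the inertia group at any place of the splitting field of $f(x)-t$ lying over the infinite place of $K(t)$, and then cites Turnwald (reproduced in Schinzel's book) for an alternative proof that avoids inertia groups altogether. Your Puiseux/Hensel construction is a concrete, self-contained realization of the former approach: the automorphism $v\mapsto\zeta v$ you exhibit is precisely a generator of the inertia group at the place above infinity determined by your embedding $\iota$, and the hypothesis $\charp(K)\nmid n$ enters exactly where it should---to guarantee tameness, to make $W^n-a_n$ separable for the Hensel lift, and to supply the primitive $n$-th root of unity $\zeta$. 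The Turnwald approach the paper also references is more elementary in that it works directly with factorizations over $K(t)$ and needs no completions or local machinery, at the cost of being less conceptual; your version has the advantage of making transparent \emph{why} the tameness hypothesis is the right one.
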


The existence of a cyclic subgroup from Lemma~\ref{cyclic} is well known. One such transitive cyclic subgroup is the inertia group at any place of the splitting field of $f(x)-t$ which lies over the infinite place of $K(t)$. A proof of Lemma~\ref{cyclic} which does not require any knowledge about inertia groups is due to Turnwald~\cite{T95}. It can also be found in \cite[Lemma~6, Sec.\@~1.5]{S00}.

Note that via Lemma~\ref{Decr.Incr}, Lemma~\ref{G1} and Lemma~\ref{cyclic} the proof of Theorem~\ref{First}  reduces  to the study of subgroups of a cyclic group. Note that the transitivity of $I$ in $G=\Mon(f)$ in Lemma~\ref{cyclic} means $G=HI$, where $H$ is a one-point stabilizer in $G$ (see \cite[Thm.\@~3.12]{KC}). The first part of Theorem~\ref{First} follows by a version of Jordan-H\"older theorem (see \cite{LKZ}) about maximal chains of subgroups between $H$ and $G=HI$, which correspond to maximal chains of subgroups between $H\cap I$ and $I$ via Lemma~\ref{G1} (see \cite[Lemma~2.10]{ZM} for details). The part of Theorem~\ref{First} which involves degrees of indecomposable polynomials follows from Lemma~\ref{EQUIV} ii).

\begin{remark}
In contrast to Lemma~\ref{cyclic}, when $\charp(K)\mid \deg f$ no transitive cyclic subgroup of $\Mon(f)$ needs to exist. 
This difference is what distinguishes the case $\charp(K)\nmid \deg f$ (known as ``tame case'') from the case $\charp(K)\mid \deg f$ (known as ``wild case''). Recall that Ritt's First Theorem holds in the tame case.  Dorey and Whaples~\cite{DW74} were the first to provide an example of a polynomial $f(x)\in K[x]$ satisfying $\textnormal{char} (K) \mid \deg f$, which has two complete decompositions consisting of a different number of indecomposables. Find it also in \cite[Ex.\@~4, Sec.\@~1.3]{S00}. 
\end{remark}

It is unfortunately not possible to reduce the proof of Theorem~\ref{Second} to questions about cyclic subgroups. The known proofs of Theorem~\ref{First} and Theorem~\ref{Second} are in fact distinct. Note that if $h_i\circ h_{i+1}=\hat h_i\circ \hat h_{i+1}$ as in Theorem~\ref{Second}, then the curve $h_i(x)=\hat h_{i+1}(y)$ is irreducible (see \cite[Lemma~3, p.~23]{S00}), it admits a parametrization and its genus is zero (see for instance \cite[Thm.\@~7, p.~487]{S00}). The proof of Ritt's Second Theorem  amounts to finding all genus-zero curves of the form $h_i(x)=\hat h_{i+1}(y)$ where $h_i, \hat h_{i+1}\in K[x]$ are of coprime degrees, and thus to genus computation.  Find a detailed proof in any of \cite{BT00, S00, Z93, ZM}.

\begin{remark}\label{finer}
From Theorem~\ref{First} it follows that any two complete decompositions of $f(x)\in K[x]$ such that $\charp(K)\nmid \deg f$ and $\deg f>1$ have the same number of indecomposables and the same sequence of degrees of indecomposable polynomials. It has been subsequently shown in \cite{BN00, GS06, KZ14, M95, ZM} that two complete decompositions of $f(x)\in K[x]$ such that $\charp(K)\nmid \deg f$ share some finer invariants. For the state of the art on this topic see \cite{KZ14}.
\end{remark}

\begin{remark}
Via L\"uroth's theorem one can derive an analogue of Lemma~\ref{Decr.Incr} for rational functions with nonzero derivative. However, there's no an analogue of Lemma~\ref{cyclic} in this case.  If $K$ is a field, then  $f(x)\in K(x)$ with $\deg f>1$ (where degree of the rational function, written as the quotient of two polynomials with no common roots, is defined as the maximum of degrees of those polynomials) is called \emph{indecomposable} over $K$ if it cannot be written as the composition $f(x)=g(h(x))$ with $g,h\in K(x)$ and $\deg g>1$, $\deg h>1$. The notion of indecomposable polynomials and rational functions are compatible, see for instance \cite[Lemma~3.1]{KZ14}.
Ritt~\cite{R23} studied decompositions of rational functions over complex numbers and noted that a certain  rational function of degree $12$ can be written as both the composition of two indecomposable and of three indecomposable rational functions. This counterexample was reproduced in \cite{GS06.2}. See also the appendix of \cite{MP11} for more counterexamples. For the state of the art on invariants of rational function decomposition see \cite{KZ14}. In this paper, the authors  examined the different ways of writing a cover of curves over a field $K$ as a composition of covers of curves over $K$ of degree at least $2$ which cannot be written as the composition of two lower-degree covers. By the generalization to the framework of covers of curves, which provides a valuable perspective even when one is only interested in questions about polynomials, several improvements on previous work were made possible. 
\end{remark}

\section{Irreducibility and Indecomposability}\label{SecFried}

The importance of the monodromy group when studying various questions about polynomials was exhibited by Fried in \cite{F70, F73} in the $70$'s. See also \cite{FJ05}. Recall that to the proof of Theorem~\ref{First} of crucial importance was Lemma~\ref{cyclic} on the existence of a cyclic group of $\Mon(f)$ (when $f(x)$ has coefficients in a field $K$ such that $\charp(K)\nmid \deg f$). The following two facts are also well known. 

\begin{lemma}\label{primitive}
If $K$ is a field and $f(x)\in K[x]$ with $f'(x)\neq 0$, then $f(x)$ is indecomposable if and only if the monodromy group of $f(x)$ is a primitive permutation group.
\end{lemma}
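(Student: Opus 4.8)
The plan is to exploit the dictionary set up in Lemma~\ref{Decr.Incr}: equivalence classes of decompositions of $f(x)$ correspond bijectively to decreasing chains of groups strictly between the one-point stabilizer $H$ and the monodromy group $G=\Mon(f)$, with the degrees of the composition factors equalling the indices between successive groups. A nontrivial decomposition $f=g\circ h$ with $\deg g>1$ and $\deg h>1$ corresponds precisely to a group $U$ with $H\subsetneq U\subsetneq G$ (a proper intermediate group), since such a $U$ gives a chain $H\subset U\subset G$ of length two, and conversely. Hence $f(x)$ is decomposable if and only if there exists a group strictly between $H$ and $G$. By definition, a transitive permutation group $G$ with one-point stabilizer $H$ is primitive exactly when $H$ is a maximal subgroup of $G$, i.e. exactly when no such intermediate group exists. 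Recall that $G=\Mon(f)$ is indeed transitive (as noted after Definition~\ref{mon}, because $f(x)-t$ is irreducible over $K(t)$), so $H$ is a genuine point stabilizer and the notion of primitivity applies.

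Carrying this out, the steps in order are: first, recall from Lemma~\ref{Decr.Incr} the bijection between equivalence classes of decompositions of $f(x)$ and decreasing chains of groups between $G$ and $H$, noting that chains of length one (the trivial chain $H\subset G$) correspond to the trivial ``decomposition'' $f=f$ while chains of length two correspond to genuine decompositions $f=g\circ h$ with both factors of degree $>1$; here one uses that the index $[G:U]$ equals $\deg g$ and $[U:H]$ equals $\deg h$, so both indices exceed $1$ precisely when $H\subsetneq U\subsetneq G$. Second, observe that a decomposition of $f(x)$ into polynomials of degree $>1$ exists if and only if some such intermediate group $U$ exists (one may also invoke the fact that equivalent decompositions have the same sequence of degrees, so the existence question is insensitive to the choice of representative). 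Third, invoke the standard group-theoretic fact that a transitive permutation group is primitive if and only if its point stabilizers are maximal subgroups; see \cite{KC}. Combining these, $f(x)$ is indecomposable $\iff$ there is no group strictly between $H$ and $G$ $\iff$ $H$ is maximal in $G$ $\iff$ $G=\Mon(f)$ is primitive.

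One small point that deserves care, rather than being a genuine obstacle, is the correspondence between \emph{chains of subgroups} and \emph{decompositions into factors of degree exceeding one}: Lemma~\ref{Decr.Incr} is phrased in terms of all decompositions, but for the statement ``indecomposable'' one must make sure that an intermediate group $U$ produces factors $g,h$ that are \emph{both} of degree $>1$, which is immediate from the index formula $\deg g=[G:U]>1$ and $\deg h=[U:H]>1$, and conversely that a decomposition with both degrees $>1$ really does produce a proper intermediate group and not a degenerate one. A second routine point is that $H$ need not be normal, so ``no intermediate group'' is the correct formulation of maximality (not ``no intermediate normal subgroup''); this matches exactly the definition of a primitive group. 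The hypothesis $f'(x)\neq 0$ is used only to guarantee that $f(x)-t$ is separable, so that $\Mon(f)$ is well defined as a Galois group and Lemma~\ref{Decr.Incr} applies.
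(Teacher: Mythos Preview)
Your proof is correct and follows essentially the same approach as the paper: invoke Lemma~\ref{Decr.Incr} to translate indecomposability of $f(x)$ into maximality of the one-point stabilizer $H$ in $G=\Mon(f)$, and then use the standard fact (cited from \cite{KC}) that a transitive permutation group is primitive if and only if its point stabilizers are maximal. The paper phrases the intermediate step in terms of fields between $K(f(x))$ and $K(x)$ rather than subgroups between $H$ and $G$, but by the bijection in Lemma~\ref{Decr.Incr} this is the same argument.
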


\begin{lemma}\label{doubly}
If $K$ is a field and $f(x)\in K[x]$ with $f'(x)\neq 0$, then
\[
\phi(x, y)=\frac{f(x)-f(y)}{x-y}\in K[x, y]
\] 
is irreducible over $K$ if and only if $\Mon(f)$ is a doubly transitive permutation group.
\end{lemma}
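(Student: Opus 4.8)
The plan is to exploit the dictionary between the combinatorics of the action of $G=\Mon(f)$ on the roots of $f(x)-t$ and the algebra of factorizations over $K(t)$, building on exactly the setup introduced for Definition~\ref{mon} and Lemma~\ref{Decr.Incr}. Write $L$ for the splitting field of $f(x)-t$ over $K(t)$, let $y_1,\dots,y_d\in L$ be the roots (with $d=\deg f$), and identify $G$ with its faithful transitive action on $\{y_1,\dots,y_d\}$; let $H=\Gal(L/K(y_1))$ be the one-point stabilizer, so $K(y_1)$ is the fixed field of $H$ and $[G:H]=d$. The starting observation is that $f(x)-t$ factors over $K(y_1)$ as $(x-y_1)\,\phi(x,y_1)$, where $\phi(x,y)=(f(x)-f(y))/(x-y)\in K[x,y]$, since $t=f(y_1)$. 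Thus $\phi(x,y_1)\in K(y_1)[x]$ has degree $d-1$ and its roots are precisely $y_2,\dots,y_d$.

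First I would translate irreducibility of $\phi(x,y)\in K[x,y]$ into irreducibility of $\phi(x,y_1)$ over $K(y_1)$. Since $\phi$ has degree $1$ in $t=f(y)$ after the substitution makes $y$ transcendental — more precisely, $K(y_1)\cong K(y)$ with $y$ transcendental over $K$, and under this isomorphism $\phi(x,y_1)$ corresponds to $\phi(x,y)\in K(y)[x]$ — Gauss's lemma (exactly as invoked after Definition~\ref{mon} for $f(x)-t$) gives that $\phi(x,y)$ is irreducible in $K[x,y]=K[y][x]$ if and only if it is irreducible in $K(y)[x]$, i.e.\ iff $\phi(x,y_1)$ is irreducible over $K(y_1)$. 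Next, by Galois theory applied to the separable polynomial $\phi(x,y_1)$ over $K(y_1)$ inside the Galois extension $L/K(y_1)$: its irreducible factors over $K(y_1)$ correspond to the orbits of $H=\Gal(L/K(y_1))$ on its root set $\{y_2,\dots,y_d\}$, with the degree of each factor equal to the size of the corresponding orbit. Hence $\phi(x,y)$ is irreducible over $K$ if and only if $H$ acts transitively on $\{y_2,\dots,y_d\}$, i.e.\ on the $d-1$ points other than $y_1$. But a transitive permutation group $G$ with one-point stabilizer $H$ is doubly transitive precisely when $H$ is transitive on the remaining points (this is the standard characterization of $2$-transitivity; cf.\ \cite{KC}). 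Combining these equivalences yields: $\phi(x,y)$ irreducible over $K$ $\iff$ $H$ transitive on $\{y_2,\dots,y_d\}$ $\iff$ $G=\Mon(f)$ doubly transitive.

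The main obstacle — or rather the point that needs care rather than ingenuity — is the first translation step: checking that no common factor or spurious vanishing creeps in when passing between $\phi(x,y)\in K[x,y]$, $f(x)-t$ over $K(t)$, and $\phi(x,y_1)$ over $K(y_1)$. One must verify that $\phi(x,y_1)$ genuinely has degree $d-1$ (no degree drop, which holds since the leading coefficient of $f$ is a nonzero constant), that its roots are exactly $y_2,\dots,y_d$ without multiplicity (separability of $f(x)-t$, guaranteed by $f'(x)\neq 0$, forces the $y_i$ to be distinct), and that the isomorphism $K(y_1)\xrightarrow{\sim}K(t)(y_1)$ sending $y_1$ to a transcendental is compatible with the factorization — all of which is routine but should be stated. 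A secondary point worth flagging is that one should not literally claim $K(y_1)\cong K(x)$ with $x$ the generic variable in $\phi(x,y)$; rather the clean statement is the abstract one about $H$-orbits, and the bridge to $K[x,y]$ is supplied by Gauss's lemma exactly as in the paragraph after Definition~\ref{mon}. With those verifications in hand the proof is a short chain of iff's and could be written in well under a page.
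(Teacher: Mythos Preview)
Your argument is correct and is precisely the standard proof: factor $f(x)-t=(x-y_1)\phi(x,y_1)$ over $K(y_1)$, use Gauss's lemma to identify irreducibility of $\phi(x,y)$ in $K[x,y]$ with irreducibility of $\phi(x,y_1)$ over $K(y_1)$, and then translate the latter via the orbit--factor correspondence into transitivity of the point stabilizer $H$ on the remaining roots, which is the usual characterization of double transitivity. The paper does not actually supply a proof of this lemma---it simply refers the reader to \cite[Sec.~1.5, Lemma~5]{S00}---and the argument found there is exactly the one you outline; the only cosmetic point is that your aside ``$\phi$ has degree $1$ in $t=f(y)$'' is not the right justification for applying Gauss's lemma (what you need, and what your ``more precisely'' clause in effect uses, is that the leading coefficient of $\phi(x,y)$ in $x$ lies in $K^\times$, so $\phi$ is primitive in $K[y][x]$).
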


Recall that a transitive group action is said to be primitive if it preserves no nontrivial partition of the underlying set, see \cite[Def.\@~7.11]{KC}. An action of a group $G$ on a set $X$ with $\# X\geq 2$ is called doubly transitive when, for any two ordered pairs of distinct elements $(x_1, y_1)$ and $(x_2, y_2)$ in $X^2$, there is a $g\in G$ such that $y_2=gx_2$ and $y_1=gx_1$, see \cite[Sec.\@~4]{KC}. As it is a quick proof, we recall the proof of Lemma~\ref{primitive}. Find the proof of Lemma~\ref{doubly} in \cite[Sec.~1.5, Lemma~5]{S00}. 

\begin{proof}[Proof of Lemma~\ref{primitive}]
By Lemma~\ref{Decr.Incr} it follows that $f(x)$ is indecomposable if and only if there are no proper fields between $K(f(x))$ and $K(x)$, i.e.\@ if and only if $H$ is a maximal subgroup of $G$. It is well known that a one-point stabilizer of a transitive permutation group is maximal if and only if the group is primitive, see \cite[Thm.\@~7.15]{KC}. 
\end{proof}

We remark that M\"uller~\cite{M95} classified the possible monodromy groups for indecomposable complex polynomials. The analogous problem in fields of arbitrary characteristic is not solved, and has been studied in \cite{GS95, GZ10}.

Recall also that doubly transitive actions are primitive~\cite[Cor.\@~7.17]{KC}. Also, a group action is doubly transitive if and only if it is transitive and the stabilizer of any $x\in X$ acts transitively on $X\setminus\{x\}$, see \cite[Cor.\@~4.16]{KC}. These facts together with some deeper understanding of the monodromy group of $f(x)\in K[x]$ when $\charp(K)\nmid \deg f$ (see \cite[Lemma~6, Lemma~7, p.~55--56]{S00} can be used to prove that the following theorem holds. 

\begin{theorem}\label{Fried1}
Let $K$ be a field and $f\in K[x]$ such that $\charp(K)\nmid \deg f=:n$. The following assertions are equivalent.
\begin{itemize}
\item[i)] $(f(x)-f(y))/(x-y)$ is irreducible over $\overline{K}$,
\item[ii)] $f(x)$ is indecomposable and if $n$ is an odd prime then $f(x)\neq \alpha D_n(x+b, a)+c$
with $\alpha, a, b, c\in K$, with $a=0$ if $n=3$, where $D_n(x, a)$ is the $n$-th Dickson polynomial with parameter $a$, defined by \eqref{dickson}.
\end{itemize}
\end{theorem}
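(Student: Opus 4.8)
The plan is to pass to the geometric monodromy group $A:=\Gal(f(x)-t\,/\,\overline{K}(t))$, regarded as a permutation group on the $n$ roots of $f(x)-t$, and to translate both assertions into statements about the permutation structure of $A$. Applying Lemma~\ref{doubly} over $\overline{K}$, assertion i) is equivalent to $A$ being doubly transitive; applying Lemma~\ref{primitive} over $\overline{K}$, indecomposability of $f$ over $\overline{K}$ is equivalent to $A$ being primitive; and since $\charp(K)\nmid n$ (in particular $f'\neq 0$) Lemma~\ref{cyclic} gives that $A$ contains an $n$-cycle. I read ``$f$ indecomposable'' in ii) over $\overline{K}$; this matches the usual convention, and for polynomials in the tame case it agrees with indecomposability over $K$ by the descent of polynomial decompositions to the ground field, see \cite[Ch.~1]{S00}. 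Finally, neither the irreducibility of $(f(x)-f(y))/(x-y)$ over $\overline{K}$ nor the shape ``$f=\alpha D_n(x+b,a)+c$'' is affected by replacing $f$ with $\mu\circ f\circ\nu$ for linear $\mu,\nu$, so $f$ may be normalised by such substitutions.

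To prove i)$\Rightarrow$ii) I would argue as follows. Doubly transitive groups are primitive, so i) forces $f$ indecomposable. Suppose, for contradiction, that $n$ is an odd prime and $f=\alpha D_n(x+b,a)+c$ with $a=0$ in case $n=3$; normalising, $f=D_n(x,a)$ with $a=0$ when $n=3$. If $a=0$ then $(x^n-y^n)/(x-y)=\prod_{j=1}^{n-1}(x-\zeta^j y)$ splits over $\overline{K}$, so i) fails by Lemma~\ref{doubly}. If $a\neq 0$ (hence $n\geq 5$), the functional equation \eqref{dickson}, after adjoining a square root and an $n$-th root to $\overline{K}(t)$, shows $A$ is dihedral of order $2n$; as $2n<n(n-1)$ this $A$ is not doubly transitive, so i) again fails. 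Either way we reach a contradiction, so ii) holds.

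To prove ii)$\Rightarrow$i): by Lemma~\ref{primitive} the group $A$ is primitive of degree $n$ and it contains an $n$-cycle. If $n$ is composite, Schur's theorem on primitive permutation groups that possess a regular cyclic subgroup of composite order forces $A$ to be doubly transitive, and i) follows. If $n=p$ is prime, Burnside's theorem on transitive groups of prime degree gives that either $A$ is doubly transitive --- done --- or $A\leq\mathrm{AGL}(1,p)$. In the latter case I would identify $f$ by a ramification argument: the cover $f\colon\mathbb{P}^1\to\mathbb{P}^1$ is totally ramified over $\infty$ with an $n$-cycle as inertia there, so the Riemann--Hurwitz formula (available since $\charp(K)\nmid p$) leaves only $p-1$ for the finite branch points; bounding the cycle types of the remaining inertia generators inside $\mathrm{AGL}(1,p)$ then leaves only $A\cong C_p$, forcing $f\sim x^p$, and $A\cong D_p$, forcing $f\sim D_p(x,a)$ with $a\neq 0$; compare \cite{S00,T95}. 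For $p\geq 5$ both forms are excluded by ii) (which forbids $f\sim D_p(x,a)$ for every $a\in K$, $a=0$ included), a contradiction. For $p=3$ one has $\mathrm{AGL}(1,3)=S_3$, so either $A=S_3$ (doubly transitive, done) or $A=C_3$, and the latter forces $f\sim x^3=D_3(x,0)$, again excluded by ii). In every case $A$ is doubly transitive, so i) holds by Lemma~\ref{doubly}.

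The hard part will be the identification step in the prime-degree case: a polynomial of prime degree $p$ with $\charp(K)\nmid p$ whose geometric monodromy group lies in $\mathrm{AGL}(1,p)$ must be linearly equivalent to $x^p$ or to a Dickson polynomial $D_p(x,a)$ with $a\neq 0$. This is the one point where something beyond Lemmas~\ref{doubly}, \ref{primitive} and \ref{cyclic} is genuinely needed, and it rests on the branch-cycle description of polynomial covers together with Riemann--Hurwitz in order to exclude the intermediate affine groups $C_p\rtimes C_d$ with $2<d\mid p-1$. The only remaining subtlety is the exceptional behaviour in degree $3$, where $\mathrm{AGL}(1,3)=S_3$ is itself doubly transitive; this is exactly why ii) excludes $D_3(x,a)$ only when $a=0$.
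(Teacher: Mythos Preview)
Your proposal is correct and follows exactly the route the paper indicates: the paper does not give a self-contained proof but points to Lemmas~\ref{primitive}, \ref{doubly}, \ref{cyclic} together with the ``deeper understanding of the monodromy group'' in \cite[Lemmas~6--7, p.~55--56]{S00} and Turnwald's group-theoretic argument \cite{T95}, and your outline---pass to the geometric monodromy $A$, use Schur's theorem for composite degree and Burnside's for prime degree, then identify the $\mathrm{AGL}(1,p)$ case via Riemann--Hurwitz as $x^p$ or $D_p$---is precisely that argument fleshed out. The one small point you leave implicit is the descent of the linear parameters $\alpha,a,b,c$ from $\overline{K}$ to $K$ in the identification step, but this is routine coefficient comparison in the tame case and is handled in the cited references.
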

Theorem~\ref{Fried1} was first proved by Fried~\cite{F70}. See further \cite{T95} or \cite[Sec.\@~1.5, Thm.\@~10]{S00} for Turnwald's (group-theoretic) proof of this result. 

In contrast to Theorem~\ref{Fried1} a simple characterization of all cases of reducibility of $f(x)-g(y)$, where $f(x),g(x)\in K[x]$, is still not known. The problem has a long history. Note that if $f(x)=\phi(f_1(x))$ and $g(x)=\phi(g_1(x))$ with $\deg \phi>1$, then $f(x)-g(y)$ is reducible over $K$. The results of Feit~\cite{Feit73}, Fried~\cite{F73} and Cassou-Nogues and Couveignes~\cite{CC99} settle the problem of reducibility of $f(x)-g(y)$ when $f(x)$ is indecomposable. When not assuming this, of importance is the following result of Fried~\cite{F73}.
\begin{theorem}\label{Fried2}
Let $K$ be a field and  $f(x), g(x)\in K[x]$ with $f'g'\neq 0$. There exists polynomials $f_1, g_1\in K[x]$ and polynomials $f_2, g_2\in K[x]$ such that
\[
f=f_1\circ f_2, \quad g=g_1\circ g_2,
\]
and that
\begin{itemize}
\item the splitting field of $f_1(x)-t$ over $K(t)$ equals the splitting field of $g_1(x)-t$ over $K(t)$, where $t$ is transcendental over $K$. 
\item for every irreducible factor $F_1(x, y)$ of $f_1(x)-g_1(y)$, the polynomial $F(x, y)=F_1(f_2(x), g_2(y))$ is irreducible,
\item every irreducible factor of $f(x)-g(y)$ is of the form $F_1(f_2(x), g_2(y))$, where $F_1(x, y)$ is an irreducible factor of $f_1(x)-g_1(y)$.
\end{itemize}
Thus
\[
F_1(x, y) \to F(x, y)=F_1(f_2(x), g_2(y))
\]
is a bijection between the irreducible factors of $f_1(x)-g_1(y)$ and $f(x)-g(y)$.
\end{theorem}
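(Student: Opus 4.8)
The plan is to work with the Galois-theoretic dictionary set up in Section~\ref{SecFirst}, applied simultaneously to $f$ and $g$. Let $t$ be transcendental over $K$, let $\Omega$ be the compositum of the splitting fields of $f(x)-t$ and $g(x)-t$ over $K(t)$, and let $G=\Gal(\Omega/K(t))$. Fix a root $y_f$ of $f(x)-t$ and a root $y_g$ of $g(x)-t$ inside $\Omega$, and let $A=\Gal(\Omega/K(t,y_f))$ and $B=\Gal(\Omega/K(t,y_g))$ be the corresponding point stabilizers, so that $[G:A]=\deg f$ and $[G:B]=\deg g$. The irreducible factors of $f(x)-g(y)$ over $K$ correspond, by the standard translation (the factorization of $f(x)-g(y)=f(x)-f(y_g)$ over $K(t,y_g)$, reading off conjugates of $y_f$), to the orbits of $B$ on the cosets $G/A$, i.e.\@ to the $(B,A)$-double cosets in $G$; similarly for $f_1(x)-g_1(y)$ once $f_1,g_1$ are identified.

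The key step is to choose $f_1,f_2$ and $g_1,g_2$ correctly. First I would pass to the Galois closures: let $N_f=\Gal(\Omega/(\text{splitting field of }f(x)-t))$ be the normal core of $A$ in $G$, and $N_g$ the normal core of $B$. Set $M=N_f\cap N_g$; this is a normal subgroup of $G$. The subgroup $AM$ lies between $A$ and $G$, hence by Lemma~\ref{Decr.Incr} (applied to $\Mon(f)$, using that $G$ surjects onto it) corresponds to a decomposition $f=f_1\circ f_2$, where $K(f_1\circ f_2)\subset K(f_2)$ is the subfield fixed by $AM$; concretely $f_1$ is cut out by the group $AM$ and $f_2$ by $A$ inside $AM$. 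Define $g=g_1\circ g_2$ analogously via $BM$. Now one checks that the splitting field of $f_1(x)-t$ over $K(t)$ is the fixed field of the core of $AM$, which by construction of $M$ equals the fixed field of $M$ — and the same computation gives the core of $BM$ equal to $M$. Hence the two splitting fields coincide, which is the first bulleted claim. For the factor statements: the $(BM,AM)$-double cosets in $G$ biject with the $(B,A)$-double cosets, because $M\le N_f\cap N_g$ is normal and contained in both cores, so multiplying cosets by $M$ on either side changes nothing; this bijection of double cosets is exactly the asserted bijection $F_1\mapsto F_1(f_2(x),g_2(y))$ between irreducible factors, and irreducibility of $F_1(f_2(x),g_2(y))$ is precisely the statement that the corresponding $(B,A)$-double coset is a single $(BM,AM)$-double coset's worth of $A$-orbits fused, i.e.\@ it does not split further. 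One must also verify that $F(x,y)=F_1(f_2(x),g_2(y))$ really is a factor of $f(x)-g(y)$, which is immediate since $f_1(f_2(x))-g_1(g_2(y))=f(x)-g(y)$, so any factor of $f_1(u)-g_1(v)$ pulls back.

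The main obstacle, I expect, is verifying cleanly that the fields $K(f_2)$ cut out by $A$ inside $AM$ and $K(g_2)$ cut out by $B$ inside $BM$ are generated by \emph{polynomials} (not merely rational functions) and that the double-coset counting genuinely matches the factorization of $f_1(x)-g_1(y)$ with the correct multiplicities and degrees — i.e.\@ that no factor of $f(x)-g(y)$ is missed and none is spuriously produced. This requires combining Remark~\ref{Poly} (a subfield of $K(x)$ containing a nonconstant polynomial is generated by a polynomial) with a careful bookkeeping argument: the number of irreducible factors of $f_1(x)-g_1(y)$ over $K$ equals the number of $B$-orbits on $G/A$ after replacing $A,B$ by $AM,BM$, and the ``non-splitting'' of each factor upon substituting $f_2,g_2$ is equivalent to the equality of these two orbit counts, which in turn follows from $M$ being contained in both $N_f$ and $N_g$. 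Once the dictionary between double cosets and factors is set up carefully, each of the three bulleted assertions and the final bijection fall out of the single algebraic identity $N_f\cap N_g=M=\operatorname{core}_G(AM)=\operatorname{core}_G(BM)$.
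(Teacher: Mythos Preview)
Your overall framework---working inside $G=\Gal(\Omega/K(t))$ with $\Omega$ the compositum of the two splitting fields, identifying the irreducible factors of $f(x)-g(y)$ with $(B,A)$-double cosets, and manufacturing $f_1,f_2,g_1,g_2$ from intermediate subgroups via L\"uroth---is precisely Fried's approach (the paper itself gives no proof of this theorem but refers to \cite{F73} and the exposition in \cite{BT00}). However, your specific construction of the intermediate subgroups is wrong and in fact collapses to the trivial decomposition.

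You set $M=N_f\cap N_g$ and then form $AM$ and $BM$. But $N_f=\core_G(A)$ is by definition \emph{contained in} $A$, and likewise $N_g\subseteq B$; hence $M\subseteq N_f\subseteq A$ and $M\subseteq N_g\subseteq B$, so that $AM=A$ and $BM=B$. Your $f_1,g_1$ are then just $f,g$ themselves, $f_2,g_2$ are linear, and the first bulleted claim (equality of splitting fields) fails unless $f$ and $g$ already have the same splitting field over $K(t)$. Correspondingly, the ``single algebraic identity'' you invoke at the end, $\core_G(AM)=M$, reads $N_f=N_f\cap N_g$, which is false in general.

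Fried's actual construction \emph{crosses} the two cores rather than intersecting them. In field language one sets
\[
K(f_2(y_f))\;=\;K(y_f)\cap L_g,\qquad K(g_2(y_g))\;=\;K(y_g)\cap L_f,
\]
where $L_f,L_g$ are the splitting fields of $f(x)-t$ and $g(x)-t$; in your group notation this means replacing $AM,BM$ by $AN_g$ and $BN_f$ (note the swap). Since $N_f\subseteq A$ and $N_g$ is normal, $AN_g$ genuinely sits strictly between $A$ and $G$ in the interesting cases, and both $\core_G(AN_g)$ and $\core_G(BN_f)$ contain $N_fN_g$, so both new splitting fields lie inside $L_f\cap L_g$; a short additional argument then gives their equality. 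Once the subgroups are chosen this way, the double-coset bookkeeping you sketch (bijection of $(BN_f,AN_g)$-double cosets with $(B,A)$-double cosets, hence of irreducible factors) goes through essentially as you describe.
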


See also \cite{BT00} for more detailed exposition of Fried's proof of Theorem~\ref{Fried2}. Both references \cite{F73} and \cite{BT00} state the result for fields of characteristic $0$, but the proof extends at once to arbitrary fields. Theorem~\ref{Fried2} has important implications, see  \cite{BT00, FS72}. In particular, it has been an important ingredient in the classification of polynomials $f, g\in \Q[x]$ such that the equation $f(x)=g(y)$ has infinitely many integer solutions, as will be explained in the next section.

\section{Diophantine equations and Indecomposability}\label{SecSecond}

Ritt's polynomial decomposition results have been applied to a variety of topics. One such topic is the classification of polynomials $f, g\in \Q[x]$ such that the equation $f(x)=g(y)$ has infinitely many integer solutions. This problem has been of interest to number theorists at least since the 20's of the past century. Deep results in algebraic geometry and Diophantine approximations have been employed to address this problem. In 1929, Siegel~\cite{S29} used such methods to prove one of the most celebrated results in this area.  In the sequel we present results for polynomials with coefficients in $\Q$, as we focus on the equation $f(x)=g(y)$ with $f, g\in \Q[x]$. We start by recalling Siegel's theorem (in this special case).

Let $F(x, y)\in \Q[x, y]$ be absolutely irreducible (irreducible over the field of complex numbers). By \emph{points} of the plane curve $F(x, y)=0$ we always mean places of its function field $\overline \Q(x, y)$ (as usual, we denote by $x$ and $y$ both independent variables and coordinate functions on the plane curve). The place is \emph{infinite} if it is a pole of $x$ or $y$. The corresponding point of the plane curve is called a \emph{point at infinity}.  Genus of a plane curves is a genus of its function field. An absolutely irreducible $F(x, y)\in \Q[x,y]$ is said to be \emph{exceptional} if the plane curve $F(x, y)=0$ is of genus $0$ and has at most two points at infinity.  For $F(x, y)\in \Q[x, y]$ the equation $F(x, y)=0$ is said to have infinitely many {\it rational solutions with a bounded denominator} if there exists $\lambda \in \N$ such that $F(x, y)=0$ has infinitely many solutions $x, y\in \Q$ that satisfy $\lambda x, \lambda y \in \Z$. 

\begin{theorem}[Siegel's theorem]\label{Siegel}
Let $F(x, y)\in \Q[x, y]$ be an absolutely irreducible polynomial. If the equation $F(x, y)=0$ has infinitely many rational solutions with a bounded denominator, then the polynomial $F(x, y)$ is exceptional. 
\end{theorem}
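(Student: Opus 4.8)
The plan is to deduce Siegel's theorem in this polynomial form from the general theorem of Siegel–Mahler on $S$-integral points on affine curves, namely: if an affine curve defined over a number field has infinitely many $S$-integral points, then it has genus $0$ and at most two points at infinity. First I would reduce the statement about solutions $x,y\in\Q$ with $\lambda x,\lambda y\in\Z$ to a statement about $S$-integral points on an auxiliary curve. The pair $(x,y)$ with $\lambda x,\lambda y\in\Z$ corresponds to an integral point on the curve $F(u/\lambda,v/\lambda)=0$ after clearing denominators, or more invariantly, after a linear change of coordinates $u=\lambda x$, $v=\lambda y$ one obtains an absolutely irreducible polynomial $G(u,v)\in\Q[u,v]$ whose integral solutions are in bijection with the rational solutions of $F$ with denominator dividing $\lambda$; multiplying through to clear the coefficient denominators of $G$ one may assume $G\in\Z[u,v]$. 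Thus infinitely many rational solutions of $F(x,y)=0$ with bounded denominator yield infinitely many solutions $(u,v)\in\Z^2$ of $G(u,v)=0$, i.e. infinitely many integral (hence $S$-integral for $S=\{\infty\}$) points on the affine curve $\mathcal{C}\colon G(u,v)=0$.

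Next I would invoke the Siegel–Mahler theorem (in the form due to Mahler, or as presented in Lang or Serre's \emph{Lectures on the Mordell–Weil Theorem}): an irreducible affine curve over a number field with infinitely many $S$-integral points has geometric genus $0$ and its smooth projective model has at most two points lying over the points at infinity of the affine model. Applying this to $\mathcal{C}$ gives that $\mathcal{C}$ has genus $0$ and at most two points at infinity. Since the linear change of variables $(x,y)\mapsto(\lambda x,\lambda y)$ extends to an automorphism of $\mathbb{P}^1\times\mathbb{P}^1$ (or at any rate is a birational isomorphism of the projective closures that is biregular near infinity and preserves the line at infinity), the function fields $\overline\Q(x,y)$ and $\overline\Q(u,v)$ are isomorphic, genus is a birational invariant, and the points at infinity of the plane curve $F(x,y)=0$ correspond bijectively to those of $G(u,v)=0$. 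Therefore $F(x,y)=0$ has genus $0$ and at most two points at infinity, i.e. $F$ is exceptional.

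The main obstacle I expect is not the use of the Siegel–Mahler theorem as a black box, but the careful bookkeeping around the notion of ``points at infinity'' under the coordinate change, because the statement in the excerpt defines points as places of the function field and calls a place infinite if it is a pole of $x$ or of $y$. One must check that a place of $\overline\Q(x,y)=\overline\Q(u,v)$ is a pole of $x$ or $y$ if and only if it is a pole of $u=\lambda x$ or $v=\lambda y$, which is immediate since $\lambda\in\Q^\times$, so the set of infinite places is literally unchanged and the count ``at most two'' transfers directly. A secondary point is ensuring absolute irreducibility is preserved: since the change of variables is invertible over $\Q$ (indeed over $\overline\Q$), $G$ is absolutely irreducible whenever $F$ is, so the hypotheses of the Siegel–Mahler theorem are met. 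With these checks in place the argument is complete; the essential mathematical content is entirely concentrated in the Siegel–Mahler finiteness theorem, and the rest is the reduction of ``bounded denominator rational solutions'' to ``$S$-integral points'' via scaling.
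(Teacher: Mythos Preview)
The paper does not actually prove this theorem: it is stated as a classical result attributed to Siegel~\cite{S29} and used as a black box throughout Section~4. There is therefore no ``paper's own proof'' to compare against.

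That said, your reduction is correct and is the standard way to see that the bounded-denominator version follows from the usual statement of Siegel's theorem on integral points. The scaling $(x,y)\mapsto(\lambda x,\lambda y)$ indeed turns rational solutions with denominator dividing $\lambda$ into integral solutions of an absolutely irreducible curve $G(u,v)=0$ over $\Q$, and your observation that the set of infinite places is literally unchanged (since $u=\lambda x$, $v=\lambda y$ with $\lambda\in\Q^\times$) is exactly what is needed to transfer the ``at most two points at infinity'' conclusion back to $F$. One small remark: you do not even need the Mahler/$S$-integral generalization here, since after scaling you have genuine $\Z$-points and the original 1929 Siegel theorem suffices. In short, your argument is fine; the paper simply treats this result as known and does not supply a proof.
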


Davenport, Lewis and Schinzel~\cite{DLS61} were the first to present a finiteness criterion for the equation $f(x)=g(y)$. They provided sufficient conditions on $f(x)$ and $g(x)$ for $f(x)-g(y)$ to be  irreducible and the corresponding plane curve of positive genus. This criterion was quite restrictive for applications. 

Fried investigated this problem in a series of papers \cite{F73, F74, F734} of fundamental importance. Write 
$f(x)=f_1(f_2(x))$ and $g(x)=g_1(g_2(x))$, where $f_1, f_2, g_1, g_2\in \Q[x]$ are as in Theorem~\ref{Fried1}. Clearly, if the equation $f(x)=g(y)$ has infinitely many rational solutions with a bounded denominator, then there exists an irreducible factor $E(x, y)$ of $f(x)-g(y)$ such that the equation $E(x, y)=0$ has infinitely many such solutions. It can be easily shown that such $E(x, y)$ must in fact be absolutely irreducible (see \cite[Sec.\@~9.6]{S93}). By Siegel's theorem it follows that $E(x, y)$ is exceptional. By Theorem~\ref{Fried1} it follows that $E(x, y)=q(f_2(x), g_2(y))$ where $q(x)$ is an absolutely irreducible factor of $f_1(x)-g_1(y)$. Since $E(x, y)$ is exceptional, it can be easily shown that $q(x, y)$ must be exceptional as well, see \cite[Prop.\@~9.1]{BT00}. Since $\deg f_1=\deg g_1$, it follows that the curve $q(x, y)=0$ has exactly $\deg q$ points at infinity. Since $q(x, y)$ is exceptional it follows that $\deg q\leq 2$. 
Thus, as pointed out by Fried~\cite{F73}, the study of Diophantine equation $f(x)=g(y)$ requires the classification of polynomials $f, g\in \Q[x]$ such that $f(x)-g(y)$ has a factor of degree at most $2$. It further requires to determine for which $f(x)$ and $g(x)$, for a given $q(x, y)$ of degree at most $2$, is $q(f(x), g(y))$ exceptional. In \cite{F74}, Fried presented a very general finiteness criterion for the equation $f(x)=g(y)$, but still not explicit.
 
Schinzel~\cite{S82} obtained a completely explicit finiteness criterion  under the assumption $\gcd (\deg f, \deg g) = 1$. If this condition holds, then $f(x)-g(y)$ is irreducible as shown by Ehrenfeucht~\cite{E58}, and by Siegel's theorem $f(x)-g(y)$ must be exceptional if the equation has infinitely many rational solutions with a bounded denominator. So, $f(x)=g(y)$ is a curve of genus $0$. In this special case, the criterion almost immediately follows from Ritt's Second Theorem. Namely, as already explained (just before Remark~\ref{finer}) the proof of Ritt's Second Theorem amounts to finding all genus-zero curves of the form $f(x)=g(y)$ with $\gcd(\deg f, \deg g)=1$. 

The classification of polynomials $f, g\in \Q[x]$ such that $f(x)-g(y)$ is exceptional has been completed by Bilu and Tichy~\cite{BT00}. It required a generalization of Ritt's Second Theorem.
The problem of classifying polynomials $f, g\in \Q[x]$ such that $f(x)-g(y)$ has a factor of degree at most $2$ was completely solved by Bilu~\cite{B99} in 1999. In 2000, Bilu and Tichy~\cite{BT00} presented a very explicit finiteness criterion which proved to be widely applicable. In what follows we recall their theorem and we discuss applications. 

\subsection{Finiteness criterion}
To state the main result of \cite{BT00}, we need to define the so called ``standard pairs'' of polynomials.  In what follows $a$ and $b$ are nonzero rational numbers, $m$ and $n$ are positive integers, $r$ is a nonnegative integer, $p \in \Q[x]$ is a nonzero polynomial (which may be constant) and $D_{m} (x,a)$ is the $m$-th Dickson polynomial with parameter $a$ defined by \eqref{dickson}. The coefficients of Dickson polynomial are given by 
\begin{equation}\label{expdick}
D_m(x,a)=\sum_{j=0}^{\lfloor m/2 \rfloor} \frac{m}{m-j} {m-j \choose j} (-1)^ja^{j} x^{m-2j}.
\end{equation}
Standard pairs of polynomials over $\Q$ are listed in the following table.

\vspace{0.3cm}
\begin{center}
\scalebox{0.8}{
 \begin{tabular}{|l|l|l|}
                \hline
                kind & standard pair (or switched) & parameter restrictions \\
                \hline
                first & $(x^m, a x^rp(x)^m)$ & $r<m, \gcd(r, m)=1,\  r+ \deg p > 0$\\
                second & $(x^2,\left(a x^2+b)p(x)^2\right)$ & - \\
                third & $\left(D_m(x, a^n), D_n(x, a^m)\right)$ & $\gcd(m, n)=1$\\
                fourth & $(a ^{\frac{-m}{2}}D_m(x, a), -b^{\frac{-n}{2}}D_n (x,b))$ & $\gcd(m, n)=2$\\
                fifth & $\left((ax^2 -1)^3, 3x^4-4x^3\right)$ & - \\
                \hline
        \end{tabular}}
\end{center}
\vspace{0.3cm}

Having defined the needed notions we now state the main result of \cite{BT00}.

\begin{theorem}\label{T:BT}
Let $f, g\in \Q[x]$ be non-constant polynomials.
Then the following assertions are equivalent.
\begin{itemize}
\item[-] The equation $f(x)=g(y)$ has infinitely many
rational solutions with a bounded denominator;
\item[-] We have 
\begin{equation}\label{BTeq}
f(x)=\phi\left(f_{1}\left(\lambda(x\right)\right)\quad \& \quad g(x)=\phi\left(g_{1}\left(\mu(x)\right)\right),
\end{equation}
where $\phi\in\Q[x]$, $\lambda, \mu\in \Q[x]$ are linear polynomials,
and $\left(f_{1}(x),g_{1}(x)\right)$ is a
standard pair over $\Q$ such that the equation $f_1(x)=g_1(y)$
has infinitely many rational solutions with a bounded denominator.
\end{itemize}
\end{theorem}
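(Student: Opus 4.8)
The plan is to deduce Theorem~\ref{T:BT} from Siegel's theorem (Theorem~\ref{Siegel}), Fried's reduction (Theorem~\ref{Fried2} together with Theorem~\ref{Fried1}), and the generalized Ritt's Second Theorem, exactly along the lines sketched in the discussion preceding the statement. The implication ``\eqref{BTeq} $\Rightarrow$ infinitely many solutions'' is the easy direction: if $f=\phi\circ f_1\circ\lambda$ and $g=\phi\circ g_1\circ\mu$, then every rational solution $(x_0,y_0)$ of $f_1(u)=g_1(v)$ with bounded denominator pulls back, via the linear substitutions $u=\lambda(x)$, $v=\mu(y)$, to a solution of $f(x)=g(y)$ with bounded denominator, and there are infinitely many of the former by hypothesis on the standard pair. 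So the whole content is in the forward implication.

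\smallskip
For the forward implication I would proceed as follows. First, if $f(x)=g(y)$ has infinitely many rational solutions with bounded denominator, pick an irreducible factor $E(x,y)$ of $f(x)-g(y)$ through which infinitely many of them pass; a standard argument (using that the solutions have bounded denominator) shows $E$ is in fact absolutely irreducible, so Siegel's theorem applies and $E$ is exceptional, i.e. the curve $E(x,y)=0$ has genus $0$ and at most two points at infinity. Next, apply Theorem~\ref{Fried2} to write $f=f_1\circ f_2$, $g=g_1\circ g_2$ so that every irreducible factor of $f(x)-g(y)$ has the shape $q(f_2(x),g_2(y))$ for $q$ an irreducible factor of $f_1(x)-g_1(y)$, and $f_1,g_1$ have the same splitting field over $K(t)$; in particular $\deg f_1=\deg g_1$. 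Then $E=q\circ(f_2,g_2)$ for some such $q$, and one checks that exceptionality of $E$ forces $q$ itself to be exceptional (genus $0$ is inherited, and the points at infinity of $q=0$ sit under those of $E=0$); since $\deg f_1=\deg g_1$ the curve $q(x,y)=0$ has exactly $\deg q$ points at infinity, whence $\deg q\le 2$. Thus I am reduced to the classification of all pairs $(f_1,g_1)$ of polynomials of equal degree such that $f_1(x)-g_1(y)$ has an irreducible factor $q$ of degree $\le 2$ with $q(f_2(x),g_2(y))=0$ of genus $0$ and at most two points at infinity — and then to unwinding, through the compositions and the linear normalizations allowed by Lemma~\ref{EQUIV}~i), which concrete shapes $(f_1,g_1)$ can take; these are precisely the five families in the standard-pair table (plus the requirement that $f_1(x)=g_1(y)$ itself have infinitely many solutions, which is what pins down the parameter restrictions).

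\smallskip
Concretely, the case $\deg q=1$ means $f_1(x)-g_1(y)$ has a linear-in-$y$ (or linear-in-$x$) factor, which by Ehrenfeucht/L\"uroth-type considerations already forces $f_1$ and $g_1$ to be related by a linear change after a common $\phi$; the genuinely substantive case is $\deg q=2$, where the curve $q(x,y)=0$ is rational and one must enumerate the degree-$2$ factorizations of $f_1(x)-g_1(y)$. This is where the \emph{generalized} Ritt's Second Theorem enters: the condition that $f_1(x)-g_1(y)$ be reducible, or have a small-degree factor, is a strong constraint that the monodromy/genus analysis turns into the statement that $(f_1,g_1)$, up to linear equivalence and a common left composand, is one of $(x^m, ax^rp(x)^m)$, $(x^2,(ax^2+b)p(x)^2)$, the Dickson pairs $(D_m(x,a^n),D_n(x,a^m))$, the even-degree Dickson variant, or the sporadic pair $((ax^2-1)^3,3x^4-4x^3)$. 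I expect the main obstacle to be exactly this last reduction: carrying out the genus-$0$ plus ``at most two points at infinity'' bookkeeping for $q(f_2(x),g_2(y))$ and matching the output against the five standard pairs is the technical heart of \cite{BT00}, and it is considerably more delicate than the coprime-degree case (which, as noted before Remark~\ref{finer}, follows almost directly from Ritt's Second Theorem) because here $\gcd(\deg f,\deg g)$ may be arbitrary and one must handle reducible $f(x)-g(y)$. For the details of this computation I would refer to \cite{BT00}; the role of the preceding sections is to supply the structural inputs (Siegel, Fried, Ritt~II) that reduce the Diophantine statement to that finite classification.
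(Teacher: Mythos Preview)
Your proposal is correct and follows essentially the same route as the paper: the paper does not give a self-contained proof of Theorem~\ref{T:BT} but rather the sketch in the paragraphs preceding it (Siegel $\Rightarrow$ an exceptional absolutely irreducible factor $E$; Fried's reduction Theorem~\ref{Fried2} $\Rightarrow$ $E=q(f_2,g_2)$ with $q$ exceptional and $\deg q\le 2$; then the Bilu/Bilu--Tichy classification in \cite{B99,BT00}), and your write-up is a faithful expansion of exactly that outline, including the same deferral to \cite{BT00} for the genus-$0$ bookkeeping that produces the five standard pairs. (Note incidentally that the paper's sketch cites ``Theorem~\ref{Fried1}'' at the step where $E=q(f_2,g_2)$, but the content used is that of Theorem~\ref{Fried2}, as you correctly identify.)
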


Note that if the equation $f(x)=g(y)$ has only finitely many rational solutions with a bounded denominator, then it clearly has only finitely many integer solutions. 

The proof of Theorem~\ref{T:BT} relies on Siegel's classical theorem~\cite{S29}, and is consequently ineffective (there's no algorithm for finding all solutions).

Theorem~\ref{T:BT} has served to prove finiteness of integer solutions of various Diophantine equations of type $f(x)=g(y)$, e.g.\@ when $f(x)$ and $g(x)$ are restricted to power-sum and alternating power-sum polynomials \cite{BKLP12, BBKPT02, KR13, KS05, R04}, classical orthogonal polynomials \cite{T03, ST03, S04, ST05}, certain polynomials arising from counting combinatorial objects \cite{BST00, P09}, and several other classes of polynomials (see for instance \cite{BFLP13, DT01,KS03,PPS11}).

We further mention that Theorem~\ref{T:BT} was recently slightly refined in \cite{BFLP13}. Via \cite[Thm.\@~1.1]{BFLP13} proving that the equation of type $f(x)=g(y)$ has only finitely many solutions, i.e. showing the impossibility of \eqref{BTeq}, can be made somewhat shorter than by using Theorem~\ref{T:BT}, as the number of standard pairs is reduced to three.

Proving that the equation $f(x)=g(y)$ has only finitely many integer solutions using Theorem~\ref{T:BT}, reduces to showing that polynomials $f(x)$ and $g(x)$ can not be written as in \eqref{BTeq}.
In what follows we list some methods and key ideas used in the above listed and related papers to handle this problem. 

\subsection{Theorem of Erd\H{o}s and Selfridge}
 If for nonconstant $f, g\in \Q[x]$ the equation $f(x)=g(y)$ has infinitely many integer solutions, then from Theorem~\ref{T:BT} it follows that 
\[
f(x)=\phi(\tilde f(x))\quad \& \quad g(x)=\phi(\tilde g(x))\quad \textnormal{with}\  \phi, \tilde f, \tilde g\in \Q[x].
\]
 Write $\phi(x)=\phi_k x^k+\cdots+\phi_0$, $\tilde f(x)=a_nx^n +\cdots+a_0$ and $\tilde g(x)=b_mx^m+\cdots+b_0$.
Then 
\[
f(x)=\phi(\tilde f(x))=\phi_k a_n^k x^{kn} + \cdots\quad  \& \quad  g(x)=\phi(\tilde g(x))=\phi_k b_m^k x^{km} + \cdots.
\]
Thus we have the following observation.

\begin{observation}\label{obs}
If \eqref{BTeq} holds for nonconstant $f, g\in \Q[x]$ and $k=\deg \phi$, then the quotient of the leading coefficients of $f(x)$ and $g(x)$ is a $k$-th power of a rational number. 
\end{observation}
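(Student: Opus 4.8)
The plan is to unwind the definition of $\phi\circ f_1\circ\lambda$ and $\phi\circ g_1\circ\mu$ and simply compare leading coefficients. First I would observe that composing with the linear polynomials $\lambda$ and $\mu$ only changes leading coefficients by a nonzero rational factor and, more importantly, does not change the degree of the inner polynomial; so without loss of generality I may absorb $\lambda$ and $\mu$ and write $f=\phi(\tilde f)$, $g=\phi(\tilde g)$ with $\tilde f=f_1\circ\lambda$, $\tilde g=g_1\circ\mu\in\Q[x]$, which is exactly the reduction already carried out in the paragraph preceding the statement. Write $\phi(x)=\phi_k x^k+\cdots+\phi_0$ with $\phi_k\neq 0$ and $k=\deg\phi$, and $\tilde f(x)=a_n x^n+\cdots$, $\tilde g(x)=b_m x^m+\cdots$ with $a_n,b_m\in\Q^\times$.

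Next I would compute the leading term of each composition. Substituting $\tilde f$ into $\phi$, the highest-degree contribution comes only from the top term of $\phi$ applied to the top term of $\tilde f$, since $\deg(\phi_j\tilde f(x)^j)=jn<kn$ for $j<k$; hence the leading coefficient of $f(x)=\phi(\tilde f(x))$ is $\phi_k a_n^k$, and likewise the leading coefficient of $g(x)=\phi(\tilde g(x))$ is $\phi_k b_m^k$. Therefore the quotient of the leading coefficients of $f$ and $g$ equals
\[
\frac{\phi_k a_n^k}{\phi_k b_m^k}=\left(\frac{a_n}{b_m}\right)^{k},
\]
which is visibly a $k$-th power of a rational number, as claimed.

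Since the argument is a direct identification of leading coefficients, there is no serious obstacle; the only point requiring a word of care is the harmless reduction step, namely checking that replacing $f_1$ by $f_1\circ\lambda$ (and $g_1$ by $g_1\circ\mu$) with $\lambda,\mu$ linear keeps everything in $\Q[x]$ and preserves the relevant degrees, so that the exponent $k=\deg\phi$ appearing in the conclusion is genuinely the same $k$ as in the hypothesis \eqref{BTeq}. Everything else is the routine computation already sketched in the text, so in the write-up I would state the reduction in one sentence and then display the leading-coefficient identity above.
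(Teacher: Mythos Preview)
Your proposal is correct and follows exactly the paper's approach: absorb the linear polynomials into $\tilde f$ and $\tilde g$, compute the leading coefficients of $\phi(\tilde f)$ and $\phi(\tilde g)$ as $\phi_k a_n^k$ and $\phi_k b_m^k$, and take their quotient. This is precisely the computation the paper displays in the paragraph immediately preceding the observation.
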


The following theorem was proved by Erd\H{o}s and Selfridge~\cite{ES74} in 1974.
\begin{theorem}\label{ES}
The equation
\[
x(x+1)\cdots(x+k-1)=y^l
\]
has no solutions in integers $x>0$, $k>1$, $l>1$, $y>1$. 
\end{theorem}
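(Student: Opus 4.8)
The plan is to follow the strategy of Erd\H{o}s and Selfridge~\cite{ES74}. One first reduces to the case that the exponent is prime: if $q\mid l$ is a prime then $y^l=(y^{l/q})^q$, so we may replace $l$ by $q$ and assume $l=p$ is prime. Suppose, for contradiction, that $x(x+1)\cdots(x+k-1)=y^p$ with $x>0$, and write each factor as $x+i=a_ib_i^p$ where $a_i$ is $p$-th-power-free, $0\le i\le k-1$. If a prime $q\ge k$ divides some $x+i$, then it divides no other of the $k$ consecutive factors (two multiples of $q$ differ by at least $q>k-1$), so the exponent of $q$ in $y^p$ -- a multiple of $p$ -- is contributed entirely by $x+i$; as $a_i$ is $p$-th-power-free this forces $q\nmid a_i$. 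Hence every $a_i$ is a product of primes smaller than $k$, and dividing $\prod_i(x+i)=y^p$ by the $p$-th power $\prod_ib_i^p$ shows that $\prod_{i=0}^{k-1}a_i$ is itself a perfect $p$-th power.

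Next one reduces to the situation where $x$ is large compared with $k$. If $x\le k$, let $q$ be the largest prime not exceeding $x+k-1$; Bertrand's postulate gives $q>(x+k-1)/2$, and since $x\le k$ one checks $q\ge x$, so $q$ lies in $\{x,\dots,x+k-1\}$; because $2q>x+k-1$, the prime $q$ divides exactly one factor and only to the first power, contradicting that its exponent in $y^p$ is a multiple of $p\ge2$. So $x\ge k+1$, and then the Sylvester--Erd\H{o}s theorem applies: the product $x(x+1)\cdots(x+k-1)$ of $k$ consecutive integers all exceeding $k$ has a prime factor $q>k$. Again its exponent in $y^p$ is concentrated in a single factor, so $q^p\mid x+i_0$ for some $i_0$, whence $x>q^p-k>k^p-k$. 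When $p\ge3$ this makes $x$ large enough to conclude that $a_0,\dots,a_{k-1}$ are pairwise distinct: if $a_i=a_j$ with $i<j$, then $x+i=a_ib_i^p$ and $x+j=a_ib_j^p$ with $b_j\ge b_i+1$ give $k-1\ge j-i=a_i(b_j^p-b_i^p)\ge p\,a_ib_i^{p-1}$, hence $x\le x+i=(a_ib_i^{p-1})b_i\le\frac{k-1}{p}(x+k-1)^{1/p}$, which fails once $x$ exceeds, say, $2k^{p/(p-1)}$ -- a bound the inequality $x>k^p-k$ comfortably implies for $p\ge3$.

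It remains -- and this is the crux -- to derive a contradiction from the facts now in hand: $a_0,\dots,a_{k-1}$ are $k$ distinct positive integers, each $p$-th-power-free, each composed only of primes below $k$, with $a_i\mid x+i$ and $\prod_ia_i$ a perfect $p$-th power. When $k$ is small this is immediate because there are too few admissible integers (already for $k=3$, $p=2$ the only candidates are $1$ and $2$), and the finitely many remaining pairs $(x,k)$ with $k$ bounded are cleared using Sylvester--Erd\H{o}s type lower bounds for the largest prime factor of $\binom{x+k-1}{k}$ together with direct verification. When $k$ is large one argues quantitatively: for each prime $q<k$ the exponent of $q$ in $\prod_ia_i$ is $\sum_i\big(v_q(x+i)\bmod p\big)$, which is governed by the estimates $\#\{i:q^t\mid x+i\}\le\lfloor k/q^t\rfloor+1$, and balancing these contributions against the sizes and spacing of the $b_i$ -- themselves constrained by $\prod_ib_i^p=\prod_i(x+i)/\prod_ia_i$ -- contradicts the distinctness of the $a_i$. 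I expect this final balancing of the multiplicative structure of the $a_i$ and $b_i$ against the perfect-power constraint to be the main obstacle; it is the technical core of~\cite{ES74}. Finally, the exponent $p=2$ -- the assertion that a product of two or more consecutive positive integers is never a perfect square -- was established earlier, independently, by Rigge and by Erd\H{o}s, and needs its own, equally delicate, analysis.
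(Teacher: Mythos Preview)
The paper does not prove this theorem at all: it is stated, attributed to Erd\H{o}s and Selfridge~\cite{ES74}, and then used as a black box (via Observation~\ref{obs}) in the proof of Theorem~\ref{hyperplanes}. So there is no ``paper's own proof'' to compare against.

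What you have written is a correct outline of the genuine Erd\H{o}s--Selfridge strategy, and the parts you actually carry out are sound: the reduction to a prime exponent, the decomposition $x+i=a_ib_i^{p}$ with $a_i$ $p$-th-power-free, the argument that every prime dividing $a_i$ is below $k$, the fact that $\prod_i a_i$ is a $p$-th power, the Bertrand step disposing of $x\le k$, the Sylvester--Erd\H{o}s input giving $x>k^{p}-k$, and the distinctness of the $a_i$ for $p\ge 3$. These are all standard and correctly stated.

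However, by your own admission the proof is incomplete. The heart of~\cite{ES74} is precisely the step you defer: deriving a contradiction from having $k$ pairwise distinct, $(k{-}1)$-smooth, $p$-th-power-free integers $a_0,\dots,a_{k-1}$ with $a_i\mid x+i$ and $\prod_i a_i$ a perfect $p$-th power. That argument is intricate (it combines counting the $a_i$ below certain thresholds, controlling the sizes of the $b_i$, and a delicate case analysis for small $k$), and you have only gestured at it. Likewise the case $p=2$ is handed off entirely to the earlier work of Rigge and Erd\H{o}s. So what you have is an accurate road map rather than a proof. In the context of this survey paper, which merely quotes the result, a full proof is not expected; but if you intend this as a self-contained argument, the gap is exactly the one you identified.
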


In several applications of Theorem~\ref{T:BT}, Observation~\ref{obs} and Theorem~\ref{ES} were key ingredients. We mention \cite{BST00, P09} in which the finiteness of integer solutions is established for certain Diophantine equations arising from counting combinatorial objects. To illustrate how these three ingredients can be successfully combined we prove the following.

\begin{theorem}\label{hyperplanes}
For $m\geq 4$ and $n\geq 3$ the equation
\[
{x \choose 0}+{x \choose 2}+\cdots+{x \choose m}=y(y+1)\cdots(y+n-1)
\]
has only finitely many integer solutions $x$ and $y$.
\end{theorem}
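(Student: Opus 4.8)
The plan is to argue by contradiction using the criterion of Bilu and Tichy. Write $f(x)={x\choose 0}+{x\choose 2}+\cdots+{x\choose m}$ (so that $m$ is even, $\deg f=m\ge 4$, and $f$ has leading coefficient $1/m!$) and $g(y)=y(y+1)\cdots(y+n-1)$ (monic of degree $n\ge 3$). Suppose $f(x)=g(y)$ has infinitely many integer solutions. By Theorem~\ref{T:BT} there are $\phi\in\Q[x]$, linear $\lambda,\mu\in\Q[x]$, and a standard pair $(f_1,g_1)$ over $\Q$ with $f=\phi\circ f_1\circ\lambda$ and $g=\phi\circ g_1\circ\mu$. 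The first step is to show $\deg\phi=1$: by Observation~\ref{obs} the ratio $1/m!$ of the leading coefficients of $f$ and $g$ is a $(\deg\phi)$-th power in $\Q$, so $m!$ is a perfect $(\deg\phi)$-th power of an integer; but by Bertrand's postulate there is a prime $p$ with $m/2<p\le m$, whence $v_p(m!)=1$ and therefore $\deg\phi=1$. It follows that $\deg f_1=m$, $\deg g_1=n$, and that $f$ (resp.\ $g$) is obtained from $f_1$ (resp.\ $g_1$) by composing with linear polynomials on both sides.

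The second step is to record two rigidity properties. (i) $f$ is not symmetric about any point: if $f(x_0+u)\equiv f(x_0-u)$, then the root multiset of $f$ is invariant under $r\mapsto 2x_0-r$, so $x_0$ is the centroid of the roots of $f$; since only the top term ${x\choose m}$ contributes to the coefficient of $x^{m-1}$, this centroid equals $\bigl(0+1+\cdots+(m-1)\bigr)/m=(m-1)/2$, forcing $f(0)=f(m-1)$. But this is false: $f(0)=1$ while $f(k)=2^{k-1}$ for $1\le k\le m$ (as ${k\choose j}=0$ for $j>k$ and $\sum_{j\text{ even}}{k\choose j}=2^{k-1}$), so $f(m-1)=2^{m-2}>1$. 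Since symmetry about a point is preserved under composing with linear polynomials on both sides, (i) shows $f$ is linearly equivalent to no even polynomial. (ii) $g$ has the $n$ distinct real roots $0,-1,\dots,-(n-1)$, so by Rolle's theorem $g'$ has $n-1=\deg g'$ distinct roots and hence is separable; consequently $g(y)-\beta$ has no root of multiplicity $\ge 3$ for any $\beta$, and likewise for any polynomial linearly equivalent to $g$.

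The third step is to eliminate the five kinds of standard pairs, remembering the ``switched'' versions and that $\deg f_1=m$ (even, $\ge4$) while $\deg g_1=n\ge3$. The second kind has a component of degree $2$: impossible. In the first kind one of $f_1,g_1$ is $x^N$ with $N$ its degree, so either $f\sim x^m$, contradicting (i) (for even $m$, $x^m$ is even), or $g\sim x^n$ with $n\ge 3$, contradicting (ii) (as $x^n$ has a root of multiplicity $n\ge3$). In the third and fourth kinds the component of degree $m$ --- which is $f_1$, since $m\neq n$ there --- is a nonzero scalar multiple of $D_m(x,c)$, a polynomial in $x^2$ by \eqref{expdick} because $m$ is even; hence $f\sim D_m(x,c)$ contradicts (i). In the fifth kind $n\in\{4,6\}$ and the component of degree $n$ --- which is $g_1$ --- is $3x^4-4x^3=x^3(3x-4)$ or $(ax^2-1)^3$, each with a root of multiplicity $3$, so $g$ contradicts (ii). No standard pair survives, yielding the contradiction.

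The inputs beyond Theorem~\ref{T:BT} and Observation~\ref{obs} are Bertrand's postulate and the two elementary facts $f(k)=2^{k-1}$ for $1\le k\le m$ and ``the roots of $f$ have centroid $(m-1)/2$''; these are the only places the specific left-hand side is used. I expect the only delicate bookkeeping to be in the last step --- matching degrees against the possibly-switched standard pairs --- but this becomes short once one observes that, $m$ being even, the whole power-and-Dickson part of the Bilu--Tichy list consists of polynomials symmetric about a point, which (i) rules out for $f$.
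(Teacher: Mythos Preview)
Your argument is clean, but it rests on reading the left-hand side as $\sum_{j\ \text{even}}\binom{x}{j}$ with $m$ necessarily even, and that is not the polynomial the paper is actually treating. The coefficients in the paper's own proof (for instance $c_{m-1}=a_1^{m-1}(2a_0-m+3)/\bigl(2(m-1)!\bigr)$) match $H_m(x)=\sum_{j=0}^{m}\binom{x}{j}$; the even-index sum would give $2a_0-m+1$ in the numerator instead. The unrestricted hypothesis $m\ge4$, the reference to \cite{P09} where $H_m(x)=H_n(y)$ is studied for $m>n\ge3$, and the label of the theorem all confirm that the displayed formula is a misprint for $\binom{x}{0}+\binom{x}{1}+\cdots+\binom{x}{m}$.

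For the intended $H_m$ your proof has a genuine gap when $m$ is odd: your elimination of the first, third and fourth kinds of standard pairs uses only that $x^m$ and $D_m(x,c)$ are \emph{even} functions, which fails for odd $m$. What one would need instead is to exclude point-symmetry $f(x_0+u)+f(x_0-u)\equiv 2f(x_0)$ about the centroid $x_0=(m-3)/2$; this can in fact be done (e.g.\ comparing $H_m(0)+H_m(m-3)=1+2^{m-3}$ with $H_m(-1)+H_m(m-2)=0+2^{m-2}$ for odd $m\ge5$), but it is absent from your write-up. The paper sidesteps parity altogether by direct coefficient comparison: it shows that $c_{m-1}=c_{m-2}=c_{m-3}=0$ is inconsistent (ruling out $f_1=x^m$) and that $c_{m-1}=c_{m-3}=0$ is inconsistent (ruling out $f_1$ Dickson of degree $m$), uniformly in $m$.

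Where your route is genuinely different and lighter: Bertrand's postulate in place of the Erd\H{o}s--Selfridge theorem already forces $\deg\phi=1$, and the single observation that $R_n'$ is squarefree (so $R_n(y)-\beta$ never has a triple root) disposes of the fifth kind and of the case $g_1=x^n$ more conceptually than the paper's computation of the $d_j$. These pieces are valid for either reading of $H_m$ and for all parities of $m$, and could profitably replace the corresponding parts of the paper's argument.
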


\begin{proof}
Let 
\[
 H_m(x)={x \choose 0}+{x \choose 2}+\cdots+{x \choose m}, \quad R_n(x)=x(x+1)\cdots(x+n-1).
\]
Assume that the equation $H_m(x)=R_n(y)$ has infinitely many integer solutions. Then by Theorem~\ref{T:BT} it follows that
\[
H_m(x)=\phi\left(f_{1}\left(\lambda(x\right)\right)\quad \& \quad R_n(x)=\phi\left(g_{1}\left(\mu(x)\right)\right),
\]
where $\phi\in\Q[x]$, $\left(f_{1}(x),g_{1}(x)\right)$ is a
standard pair over $\Q$, and $\lambda, \mu\in \Q[x]$ are linear polynomials. Let $k=\deg\phi$. From Observation~\ref{obs} it follows that $m!$ is a $k$-th power of a rational number, and thus of an integer. From Theorem~\ref{ES} it follows that $k=1$ and hence
\begin{equation}\label{hypeq}
H_m(a_1x+a_0)=e_1f_{1}(x)+e_0\quad \& \quad R_n(b_1x+b_0)=e_1g_{1}(x)+e_0,
\end{equation}
for some $a_1, a_0, b_1, b_0, e_1, e_0\in \Q$ such that $a_1b_1e_1\neq 0$. Write
\begin{align*}
H_m(a_1x+a_0)&=\sum_{k=0}^m c_k x^k, \quad R_n(b_1x+b_0)=\sum_{j=0}^n d_j x^j.
\end{align*}
Then one easily finds that
\begin{align*}
c_m&=\frac{a_1^m}{m!}, \quad c_{m-1}=\frac{a_1^{m-1}(2a_0-m+3)}{2(m-1)!},\\
c_{m-2}&=\frac{a_1^{m-2}(3m^2-(19+12a_0)m+12a_0^2+36a_0+50)}{24(m-2)!},\\
 c_{m-3}&=\frac{a_1^{m-3}(-m^3+(6a_0+10)m^2-(12a_0^2+38a_0+53)m+8a_0^3+36a_0^2+100a_0+144)}{48(m-3)!},\\
\intertext{and}
d_n&=b_1^n, \quad d_{n-1}=\frac{b_1^{n-1}n(2b_0+n-1)}{2},\\
d_{n-2}&=\frac{b_1^{n-2}n(n-1)(3n^2+(12b_0-7)n+12b_0^2-12b_0+2)}{24}.
\end{align*}
(Compare with  \cite{P09} and \cite{BBKPT02}, where these coefficients also appeared).

If $(f_1(x), g_1(x))$ is a standard pair of the second kind, then by \eqref{hypeq} either $m=\deg f_1=2$ or $n=\deg g_1=2$. Since $m, n\geq 3$ by assumption, this can not be. If $(f_1(x), g_1(x))$ is a standard pair of the fifth kind, then $g_1(x)=(ax^2-1)^3$ or $g_1(x)=3x^4-4x^3$. In both cases $g_1'(x)$ has multiple roots. Note that by \eqref{hypeq} it follows that $b_1R_n'(b_1x+b_0)=e_1g_{1}'(x)$. Since $R_n(x)$ has $n$ simple real roots, the derivative $R_n'(x)$ has $n-1$ simple real roots, and hence so does $b_1R_n'(b_1x+b_0)$, a contradiction.
If $(f_1(x), g_1(x))$ is a standard pair of the first kind, then either $H_m(a_1x+a_0)=e_1x^m+e_0$ or $R_n(b_1x+b_0)=e_1x^n+e_0$. 
If $H_m(a_1x+a_0)=e_1x^m+e_0$, then since $m>3$ it follows that $c_{m-1}=0$, $c_{m-2}=0$ and $c_{m-3}=0$. From the first two identities we have $2a_0=m-3$ and $a_1^{m-2}(m-23)=0$. Thus $m=23$ and $a_0=10$. Then $c_{20}=a_1^{20}/20!\neq 0$, a contradiction. If $R_n(b_1x+b_0)=e_1x^n+e_0$, then since $n\geq 3$ it follows that $d_{n-1}=0$ and $d_{n-2}=0$. From the former it follows that $2b_0=1-n$ and by substituting this into $d_{n-2}=0$ we get $n\in \{-1, 0, 1\}$, a contradiction. 
If $(f_1(x), g_1(x))$ is a standard pair of the third or of the fourth kind, then either $H_m(a_1x+a_0)=e_1D_m(x, a^n)+e_0$ or $H_m(a_1x+a_0)=e_1a^{-m/2}D_m(x, a)+e_0$ or $H_m(a_1x+a_0)=-e_1a^{-m/2}D_m(x, a)+e_0$ for some nonzero $a\in \Q$. In all cases it follows that $c_{m-1}=0$ and $c_{m-3}=0$, since \eqref{expdick} holds and $m\geq 4$. The former implies $2a_0=m-3$, wherefrom $c_{m-3}=a_1^{m-3}/(m-3)!\neq 0$,  a contradiction.
\end{proof}
\begin{remark}
The polynomial on the left hand side of the equation in Theorem~\ref{hyperplanes}, denoted by $H_m(x)$  in the proof, has a combinatorial interpretation, see \cite{P09}. There the author studied the  equation $H_m(x)=H_n(y)$ with $m>n\geq 3$. Note that the same strategy as in the proof of Theorem~\ref{hyperplanes} applies.
\end{remark}

\subsection{Indecomposability criteria}

A standard way to examine the finiteness of solutions of an equation of type $f(x)=g(y)$ with nonconstant $f, g\in \Q[x]$ is to first find the possible decompositions of $f(x)$ and $g(x)$, and then compare those with \eqref{BTeq}. For simiplicity, we write in this section ``indecomposable'' when we mean ``indecomposable over complex numbers''.  In \cite{DG06} and \cite{DGT05} sufficient conditions for $f(x)\in \Q[x]$ to be indecomposable are studied. In what follows we recall these and related results and give some further remarks on indecomposability of polynomials. 

For simplicity we first restrict our attention to monic polynomials. Let monic $f\in \Q[x]$  be decomposable, i.e.\@ there exist $g, h\in \C[x]$ such that $\deg g>1$ and $\deg h>1$ and $f(x)=g(h(x))$. Note that we may assume that $g(x)$ and $h(x)$ are monic as well and that $h(0)=0$ since we can clearly find an equivalent decomposition which satisfies these assumptions. Indeed, if $f(x)=\tilde g(\tilde h(x))$ and $a$ is the leading coefficient of $\tilde h(x)$, then also
\[
f(x)=\tilde g(ax+\tilde h(0))\circ \frac{1}{a}(\tilde h(x)-\tilde h(0).
\]
Thus, if monic $f(x)\in \Q[x]$ is decomposable, we may write without loss of generality
\begin{equation}\label{monic}
\begin{cases}
f(x)=g(h(x))\\
f(x)=x^n +c_{n-1}x^{n-1}+\cdots+c_0\\
g(x)=x^{t}+a_{t-1}x^{t-1}+\cdots+a_0, \\
 h(x)=x^{k}+b_{k-1}x^{k-1}+\cdots +b_0,\  \textnormal{with}\ b_0=0, \\
k, t\geq 2, \ a_i, b_j\in \C,\  i=0, 1, \ldots, t-1,\  j=0, 1, \ldots, k-1.
\end{cases}
\end{equation}
It follows that 
\begin{equation}\label{inner}
f(x)= h(x)^t+a_{t-1} h(x)^{t-1}+ \cdots +a_1h(x)+a_0.
\end{equation}

Note that $\deg (h(x)^{t-1})=n-k$, so we can compare the first $k-1$ coefficients of $f(x)$ (starting from the leading coefficient) with the corresponding coefficients of $h(x)^t$. With notation from \eqref{monic} we have the following:
\[
(*)\begin{cases}
 c_{n-1}=t b_{k-1} \\ c_{n-2}=tb_{k-2}+{t \choose 2}b_{k-1}^2 \\ \quad \quad \ \vdots \\ c_{n-k+1}=tb_{1}+\sum\limits_{i_1+2i_2+\cdots+(k-2)i_{k-2}=k-1}^{}
 d_{i_1, i_2, \ldots, i_{k-2}}\ b_{k-1}^{i_1}b_{k-2}^{i_2}\ldots b_{2}^{i_{k-2}},
\end{cases}
\]
where
\[
d_{i_1, i_2, \ldots, i_{k-2}}={t \choose {i_1, i_2, \ldots, i_{k-2}}}.
\]

\begin{lemma}\label{monic.decomp}
Assume that $f(x)\in \Q[x]$ is monic and decomposable and write without loss of generality $f(x)=g(h(x))$ as in \eqref{monic}. Then $g(x), h(x)\in \Q[x]$.
\end{lemma}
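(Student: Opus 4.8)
The plan is to bootstrap rationality of the coefficients of $h(x)$ from the triangular system $(*)$, and then deduce rationality of $g(x)$ from~\eqref{inner}. The starting observation is that $f(x)\in\Q[x]$ is monic, and that in the normalization~\eqref{monic} both $g$ and $h$ are monic with $h(0)=0$; so the unknowns are $a_0,\dots,a_{t-1}$ and $b_1,\dots,b_{k-1}$ (recall $b_0=0$), a priori only in $\C$. The key structural feature of $(*)$ is that, reading the equations from top to bottom, the $j$-th equation expresses $c_{n-j}$ as $t\,b_{k-j}$ plus a polynomial with \emph{integer} coefficients (the multinomial coefficients $d_{i_1,\dots,i_{k-2}}$) in the \emph{previously determined} quantities $b_{k-1},b_{k-2},\dots,b_{k-j+1}$.

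First I would run an induction on $j=1,2,\dots,k-1$ to show $b_{k-j}\in\Q$. For $j=1$: $c_{n-1}=t\,b_{k-1}$ with $c_{n-1}\in\Q$ and $t\in\N$, so $b_{k-1}=c_{n-1}/t\in\Q$. For the inductive step, assume $b_{k-1},\dots,b_{k-j+1}\in\Q$; the $j$-th equation of $(*)$ reads $c_{n-j}=t\,b_{k-j}+P_j(b_{k-1},\dots,b_{k-j+1})$ where $P_j$ has rational (indeed integer) coefficients, so $P_j(\dots)\in\Q$, and hence $b_{k-j}=\bigl(c_{n-j}-P_j(\dots)\bigr)/t\in\Q$. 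This establishes $h(x)\in\Q[x]$ once we also recall $b_0=0\in\Q$ and the leading coefficient is $1$.

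Next I would recover $g(x)\in\Q[x]$. Since $h(x)\in\Q[x]$ is monic of degree $k\ge 2$, the field $\Q(h(x))$ sits between $\Q$ and $\Q(x)$, and $f(x)=g(h(x))\in\Q(h(x))$; but $\Q(h(x))$, as a subfield of $\Q(x)$ with $h$ as ``coordinate'', is $\Q$-isomorphic to a rational function field $\Q(u)$ via $h(x)\mapsto u$, and under this isomorphism $f$ is sent to a polynomial in $u$ (its coefficients are obtained by rewriting $f(x)$ as a polynomial in $h(x)$, a process that only involves polynomial division by $h$ over $\Q$). Concretely: performing successive Euclidean divisions of $f(x)$ by $h(x)$ over $\Q$ yields the base-$h$ expansion $f(x)=\sum_{i=0}^{t} r_i\, h(x)^i$ with $r_i\in\Q$; comparing with~\eqref{inner} and using that $1,h,h^2,\dots,h^t$ are linearly independent over $\Q$ (as $\deg h^i = ik$ are distinct) forces $r_i=a_i$ for $i<t$ and $r_t=1$, whence every $a_i\in\Q$, i.e.\ $g(x)\in\Q[x]$.

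I do not expect a serious obstacle here; the one point that needs a little care is making sure the triangular system $(*)$ really does have the claimed ``$t\,b_{k-j}$ plus integer-coefficient polynomial in earlier $b$'s'' shape — this is just the multinomial expansion of $h(x)^t=(x^k+b_{k-1}x^{k-1}+\cdots+b_1 x)^t$ read off in degrees $n-1,\dots,n-k+1$, and it is already recorded in the excerpt, so I would simply invoke it. Alternatively, one can bypass $(*)$ entirely and argue purely via the field-theoretic step: $\Q(f(x))\subseteq\Q(h(x))\subseteq\Q(x)$ with $[\Q(x):\Q(h(x))]=k$, and by L\"uroth's theorem plus Remark~\ref{Poly} the intermediate field $\Q(h(x))$ equals $\Q(\tilde h(x))$ for some \emph{polynomial} $\tilde h\in\Q[x]$; then $h$ and $\tilde h$ differ by a linear polynomial, and after adjusting $\tilde h$ by the (necessarily rational, since it is read off from leading and next coefficients of the monic $f$) linear change to match the normalization $b_0=0$ and monic, one gets $h\in\Q[x]$, and then $g\in\Q[x]$ as above. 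I would present the elementary $(*)$-induction as the main line since it is self-contained and quantitative, and mention the L\"uroth argument as an alternative.
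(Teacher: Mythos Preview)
Your proposal is correct and follows essentially the same approach as the paper: deduce $b_{k-j}\in\Q$ inductively from the triangular system~$(*)$ (the paper compresses this to one sentence), and then read off $a_i\in\Q$ from~\eqref{inner} once $h\in\Q[x]$. Your Euclidean-division argument for the second step is exactly the content behind the paper's terse ``from~\eqref{inner} it follows that $g(x)\in\Q[x]$''; the L\"uroth alternative you mention is sound but not used in the paper.
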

\begin{proof}
Note that from $(*)$ it follows that $b_i\in \Q$ for all $i=1, 2, \ldots, k-1$, since $c_i\in \Q$ for all $i=0, 1, 2, \ldots, n-1$. Since also $b_0=0$, it follows that $h(x)\in \Q[x]$. From \eqref{inner} it follows that $g(x)\in \Q[x]$ as well.
\end{proof}

\begin{theorem}\label{schinzel}
For any decomposition of $\tilde f(x)\in \Q[x]$, there exists an equivalent one with polynomials with coefficients in $\Q$.
\end{theorem}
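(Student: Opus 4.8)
The plan is to reduce the general case to the monic case treated in Lemma~\ref{monic.decomp} by a suitable linear conjugation. Let $\tilde f(x)\in\Q[x]$ have degree $n>1$ and leading coefficient $a\in\Q^{\times}$, and suppose $\tilde f=g\circ h$ with $g,h\in\C[x]$ of degrees $t,k\geq 2$. First I would normalize: choose a linear $\lambda(x)=\alpha x+\beta\in\Q[x]$ so that $f:=\lambda\circ\tilde f$ is monic, namely take $\alpha$ to clear the leading coefficient (and $\beta$ arbitrary, say $0$). Since $\lambda$ has rational coefficients and $\tilde f\in\Q[x]$, we have $f\in\Q[x]$ and $f$ is monic. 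Now $f=(\lambda\circ g)\circ h$ is a decomposition of a monic rational polynomial, so by the normalization discussion preceding Lemma~\ref{monic.decomp} there is an equivalent decomposition $f=g^{*}\circ h^{*}$ with $g^{*},h^{*}$ monic and $h^{*}(0)=0$; by Lemma~\ref{monic.decomp}, $g^{*},h^{*}\in\Q[x]$.

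Next I would transport this back through $\lambda$. From $f=\lambda\circ\tilde f$ we get $\tilde f=\lambda^{\langle-1\rangle}\circ f=\lambda^{\langle-1\rangle}\circ g^{*}\circ h^{*}=(\lambda^{\langle-1\rangle}\circ g^{*})\circ h^{*}$. Since $\lambda\in\Q[x]$ is linear, its compositional inverse $\lambda^{\langle-1\rangle}$ also lies in $\Q[x]$, hence $\lambda^{\langle-1\rangle}\circ g^{*}\in\Q[x]$ and $h^{*}\in\Q[x]$. Thus $\tilde f=(\lambda^{\langle-1\rangle}\circ g^{*})\circ h^{*}$ is a decomposition of $\tilde f$ into polynomials with rational coefficients, of the same degrees $(t,k)$ as the original. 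Finally I would check it is equivalent to the given decomposition $\tilde f=g\circ h$: the passage $g\circ h\rightsquigarrow(\lambda\circ g)\circ h\rightsquigarrow g^{*}\circ h^{*}\rightsquigarrow(\lambda^{\langle-1\rangle}\circ g^{*})\circ h^{*}$ is a composition of the equivalence used in Lemma~\ref{monic.decomp} with the two conjugations by the linear $\lambda$, each of which only modifies the outer and inner factors by degree-one polynomials in the sense of the definition of equivalent decompositions; hence the result is equivalent to $g\circ h$. For a complete decomposition $\tilde f=f_1\circ\cdots\circ f_m$ one applies this to each adjacent pair, or more directly normalizes once and invokes the fact (Theorem~\ref{First} together with the monic reduction) that every complete decomposition of the monic polynomial $f$ is equivalent to one over $\Q$, then conjugates back by $\lambda^{\langle-1\rangle}$.

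The only genuinely delicate point is the bookkeeping in the last step — verifying that "normalize, apply Lemma~\ref{monic.decomp}, de-normalize" genuinely lands inside a single equivalence class rather than merely producing \emph{some} rational decomposition of the same shape. This is routine once one writes the linear polynomials $\mu_i$ explicitly: the conjugation $g\circ h\mapsto(\lambda\circ g)\circ h$ is literally the identity at the level of the chain of intermediate fields $K(h)$, so it changes nothing about equivalence, while the passage to monic $g^{*},h^{*}$ with $h^{*}(0)=0$ is exactly the equivalence spelled out just before \eqref{monic}. So there is no real obstacle; the statement is essentially Lemma~\ref{monic.decomp} plus the observation that equivalence and rationality of coefficients are both preserved under conjugating the whole polynomial by a rational linear map.
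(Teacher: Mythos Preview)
Your proposal is correct and follows essentially the same approach as the paper: normalize $\tilde f$ to a monic polynomial by composing on the left with a rational linear map (the paper simply multiplies by $1/c$), apply Lemma~\ref{monic.decomp} to the normalized decomposition, then undo the normalization. Your write-up is more explicit than the paper's about why the resulting rational decomposition is genuinely \emph{equivalent} to the original one, which is a welcome clarification, but the underlying argument is identical.
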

\begin{proof}
Let $c\in \Q$ be the leading coefficient of $\tilde f(x)$. For any representation of monic $f(x)=(1/c) \tilde f(x)$ as a functional composition of two polynomials, there exists an equivalent decomposition $f(x)=g(h(x))$ such that $g(x)$ and $h(x)$ are monic and $h(0)=0$, as in \eqref{monic}.  Lemma~\ref{monic.decomp} completes the proof.
\end{proof}
\begin{remark}
It is well known that in Theorem~\ref{schinzel} the field of rational numbers $\Q$ can be replaced by a field $K$ such that $\textnormal{char}(K)\nmid \deg \tilde f$. Note that the proof extends at once. In other words, if $\tilde f(x)\in K[x]$ is indecomposable over $K$, then it is indecomposable over any extension field of $K$ provided  $\textnormal{char}(K)\nmid \deg \tilde f$. This was first shown by Fried--McRae~\cite{FM69} in 1969. An alternative but similar proof of Theorem~\ref{schinzel} (with $\Q$ replaced by a field $K$ such that $\textnormal{char}(K)\nmid \deg \tilde f$) can be found in \cite[Thm.\@ 6, p.20]{S00} and in \cite[Lemma 2.1]{KR13}. If $\textnormal{char}(K)\mid \deg \tilde f$, then $\tilde f(x)$ can be indecomposable over $K$, but decomposable over some extension field of $K$, see
 \cite[p.\@ 21, Ex.\@ 3]{S00}.

\end{remark}

With respect to Theorem~\ref{T:BT} of particular interest is the case of polynomials with integer coefficients. We restrict to this case in the sequel. The following was first observed by Turnwald~\cite{T95}, and  subsequently by Dujella--Gusi\'c~\cite{DG06}. We include a proof taken from the latter paper.

\begin{theorem}\label{monic.integer}
Assume that $f(x)\in \Z[x]$ is monic and decomposable and write $f(x)=g(h(x))$ as in \eqref{monic}. Then $g, h\in \Z[x]$.
\end{theorem}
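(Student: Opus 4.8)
The plan is to build on Lemma~\ref{monic.decomp}, which already gives $g,h\in\Q[x]$ under the stated normalizations, and upgrade rationality to integrality using the fact that $f\in\Z[x]$ is monic together with a Gauss-lemma / denominator-tracking argument. First I would recall from Lemma~\ref{monic.decomp} that we may assume $f(x)=g(h(x))$ with $f,g,h$ monic, $h(0)=0$, and all three in $\Q[x]$; so the only thing left is to show the rational coefficients of $g$ and $h$ are in fact integers.

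The key step is to control the denominators of the coefficients $b_j$ of $h$. I would argue prime by prime: suppose some prime $p$ divides the denominator of some $b_j$, and let $-v$ be the minimal $p$-adic valuation occurring among the $b_j$ (with $v\ge 1$), attained at index $j_0$ (take $j_0$ maximal, i.e. the top coefficient whose valuation is this minimal value). Consider $h(x)^t$ over the $p$-adic numbers: I want to read off a coefficient of $f(x)=g(h(x))$ that has negative $p$-adic valuation, contradicting $f\in\Z[x]$. The cleanest way is to scale: pass to $h_1(x)=p^{v}h(x/p^{?})$-type substitutions, or more directly observe that the coefficient of $x^{(t-1)k+j_0}$ in $h(x)^t$ equals $t\,b_{j_0}$ plus a sum of products $b_{i_1}\cdots b_{i_t}$ with $i_1+\cdots+i_t = (t-1)k+j_0$ and each $i_\ell\le k$; every such product other than $b_k^{t-1}b_{j_0}$ (where $b_k:=1$) involves at least two factors with index $<k$, hence has $p$-adic valuation strictly larger than $-v$ (it is $\ge -2v$ only if exactly two such factors both attain $-v$ — here one must be slightly careful). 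This is exactly the system $(*)$ displayed before Lemma~\ref{monic.decomp}, run one coefficient further down, and it shows the coefficient of $f$ at that degree has valuation exactly $-v<0$, which is impossible. Hence every $b_j\in\Z$, so $h\in\Z[x]$; then from \eqref{inner}, comparing coefficients of $f(x)=h(x)^t+a_{t-1}h(x)^{t-1}+\cdots+a_0$ from the top down and using that $h$ is monic with integer coefficients, each $a_i$ is an integer-coefficient polynomial expression in the previously determined $a_j$ and in the $c_\ell$, so $g\in\Z[x]$.

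An alternative, and perhaps more robust, route avoids the delicate "two factors attaining the minimum" bookkeeping: use the theory of Newton polygons, or the following Gauss-lemma packaging. Write $h(x)=x^k+\cdots$ and let $D$ be the least common denominator of the $b_j$; then $Dh(x)\in\Z[x]$ has content coprime to... — actually the slicker statement is: if $g,h\in\Q[x]$ are monic and $g\circ h\in\Z[x]$, then $g,h\in\Z[x]$. One proves this by localizing at a prime $p$ and using that $\Z_p[x]$ is integrally closed in $\Q_p[x]$ in the relevant sense, or by induction on $\deg g$: the top $k$ coefficient comparisons (this is precisely $(*)$) force $h\in\Z_p[x]$, then $f - h^t \in \Z_p[x]$ is $g_1\circ h$ with $g_1=g-x^t$ of lower degree, and one iterates. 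Either way, the essential input is $(*)$ plus monicity.

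The main obstacle I anticipate is making the valuation estimate in the core step fully rigorous: one must verify that among all monomials contributing to a given coefficient of $h^t$, the "diagonal" term $t\,b_{j_0}$ (times the monic leading terms) is strictly dominant $p$-adically, and that no cancellation among the non-dominant terms can conspire to match it — this is where choosing $j_0$ as the maximal index attaining the minimal valuation, and choosing the target degree as $(t-1)k+j_0$, does the work, since any other contributing product with the same total degree and the same valuation $-v$ would need two factors of index $<k$ both at valuation $-v$, but then their index sum is $<2k\le(t-1)k+j_0$ only forces... in short, a short case check rules this out, but it must be written carefully. I would present the proof via the clean "monic $+$ $g\circ h\in\Z[x]$ $\Rightarrow$ integrality" lemma with the inductive localization argument, as that isolates exactly one prime and one degree comparison at a time and sidesteps the combinatorial thicket.
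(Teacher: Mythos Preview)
Your route is genuinely different from the paper's. The paper gives a three-line algebraic-integer argument: factor $g(x)=\prod_j(x-\beta_j)$ and $f(x)=\prod_i(x-\alpha_i)$ over a suitable number field; since $f$ is monic in $\Z[x]$, the $\alpha_i$ are algebraic integers; unique factorization matches each monic factor $h(x)-\beta_j$ with a subproduct $\prod_{l\in I_j}(x-\alpha_l)$, so its coefficients, and in particular (via $h(0)=0$) each $\beta_j$, are algebraic integers. The coefficients of $h$ are then rational algebraic integers, hence in $\Z$, and likewise for $g$.

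Your coefficient-comparison approach has a real gap, not merely a cosmetic one. With your choice of $j_0$ (the top index where the valuation equals the minimum $-v$), the combinatorics does force every competing product contributing at degree $(t-1)k+j_0$ to use only indices $j_\ell>j_0$ --- a nice observation, since one index $\le j_0$ caps the sum at $j_0+(i-1)(k-1)<(i-1)k+j_0$ --- but $j_\ell>j_0$ only yields $v_p(b_{j_\ell})\ge -v+1$, so a product of $i\ge 2$ such factors has valuation $\ge i(1-v)$, which for $v\ge 2$ sits \emph{below} $-v$, not above it. Your own parenthetical ``$\ge -2v$'' already flags this; it is not a matter of a short case check. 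Switching to $j_0=$ the top index with $b_{j_0}\notin\Z_p$ does land the competing products in $\Z_p$, but then the conclusion is only $t\,b_{j_0}\in\Z_p$, which says nothing when $p\mid t$. Your inductive fallback (``$(*)$ forces $h\in\Z_p[x]$, then peel off $h^t$'') has exactly the same defect: solving the triangular system $(*)$ always divides by $t$, so at best one gets $t\,b_j\in\Z_p$. The case $p\mid\deg g$ is precisely where the elementary comparison stalls, and closing it ultimately requires bringing in the roots of $g$ (Gauss/content over an extension, or equivalently the algebraic-integer step) --- which is the paper's move.
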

\begin{proof}
From Lemma~\ref{monic.decomp} it follows that $g(x), h(x)\in \Q[x]$. Let
\[
f(x)=\prod_{i=1}^n (x-\alpha_i), \  g(x)=\prod_{j=1}^t(x-\beta_j),\ \textnormal{and hence}\ f(x)=\prod_{j=1}^t(h(x)-\beta_j)
\]
where $\alpha_i$'s are algebraic integers, and $\beta_j$'s algebraic numbers. Then
\[
h(x)-\beta_j=\prod_{l\in I_j}(x-\alpha_l),\ \textnormal{where}\ I_j\subseteq \{1, 2, \ldots, n\},
\]
because of the uniqueness of factorization over a suitable number field containing $\alpha_i$'s and $\beta_j$'s.
Since $\alpha_i$'s are algebraic integers and $h(0)=0$, it follows that $\beta_j$'s are algebraic integers as well, and hence $g(x)\in \Z[x]$ and $h(x)\in \Z[x]$.
\end{proof}

We now present the criterion from \cite{DG06}, which was also obtained as a corollary of a more general result in \cite{DGT05} about possible ways to write a monic polynomial with integer coefficients as a functional composition of monic polynomials with rational coefficients.

\begin{theorem}\label{criterion}
Let $\tilde f(x)=c_nx^n+c_{n-1}x^{n-1}+\cdots+c_0\in \Z[x]$. If $\gcd(c_{n-1}, n)=1$, then $\tilde f(x)$ is indecomposable. 
\end{theorem}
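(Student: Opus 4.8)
The plan is to reduce to the monic case and then apply the system of equations $(*)$ together with Lemma~\ref{monic.integer}. First I would dispose of the leading coefficient: if $c_n$ is the leading coefficient of $\tilde f$, set $f(x) = c_n^{-1}\tilde f(x)$, a monic polynomial in $\Q[x]$. If $\tilde f$ were decomposable over $\C$, then so is $f$, and by Theorem~\ref{schinzel} (or directly Lemma~\ref{monic.decomp}) we may write $f(x)=g(h(x))$ in the normalized form \eqref{monic}, with $g,h\in\Q[x]$ monic, $h(0)=0$, and $\deg g = t\geq 2$, $\deg h = k\geq 2$, $tk=n$. Note that the coefficient of $x^{n-1}$ in $f$ is $c_n^{-1}c_{n-1}$, so it suffices to work with this normalized polynomial; I will abuse notation and still call its coefficients $c_i$, keeping in mind that $c_{n-1}$ here is $c_n^{-1}c_{n-1}^{\text{orig}}$.

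Next I would extract the arithmetic. The first line of $(*)$ reads $c_{n-1} = t\,b_{k-1}$. Here is the key point: by Lemma~\ref{monic.integer}, since $f$... wait — $f$ need not have integer coefficients. So instead I argue directly with $\tilde f$. Working with $\tilde f(x)=c_n x^n + c_{n-1}x^{n-1}+\cdots$ and a normalized decomposition $\tilde f(x) = \tilde g(h(x))$ where $h$ is monic with $h(0)=0$ of degree $k$ and $\tilde g(x)=c_n x^t + \cdots$, the comparison of the top two coefficients gives $c_{n-1} = t\, c_n\, b_{k-1}$, hence $b_{k-1} = c_{n-1}/(t c_n)$. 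Multiplying $\tilde f = c_n^{-1}$-scaled back, the cleanest route is: apply Lemma~\ref{monic.integer} to the monic integer polynomial $c_n^{n-1}\tilde f(c_n^{-1}x)$ — this is a standard trick to make a polynomial with leading coefficient $c_n$ monic with integer coefficients. Its $x^{n-1}$-coefficient is still $c_{n-1}$ (the substitution $x\mapsto c_n^{-1}x$ followed by multiplication by $c_n^{n-1}$ is degree-preserving on the leading terms and fixes $c_{n-1}$). So that polynomial is monic in $\Z[x]$ with penultimate coefficient $c_{n-1}$, and any decomposition of $\tilde f$ transports to a decomposition of it. Now by Lemma~\ref{monic.integer} its factors $g,h$ lie in $\Z[x]$, in particular $b_{k-1}\in\Z$ and $t\in\Z$, and $(*)$ gives $c_{n-1} = t b_{k-1}$ with $t = \deg g \geq 2$. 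Thus $t \mid c_{n-1}$ and $t \mid n$ (as $t \mid tk = n$), so $\gcd(c_{n-1},n) \geq t \geq 2$, contradicting $\gcd(c_{n-1},n)=1$. Therefore $\tilde f$ is indecomposable.

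The main obstacle is bookkeeping around leading coefficients: Theorem~\ref{criterion} is stated for arbitrary $\tilde f\in\Z[x]$, not monic, and the normalization \eqref{monic} and Lemma~\ref{monic.integer} are phrased for the monic case, so one must carefully pass through the substitution $x\mapsto c_n^{-1}x$ and scaling by $c_n^{n-1}$ to land in the hypotheses of Lemma~\ref{monic.integer} while preserving both the coefficient $c_{n-1}$ and the degree $n$ (hence the factorization $n=tk$). Everything else — the identity $c_{n-1}=tb_{k-1}$ from the first line of $(*)$, and the divisibility $t\mid n$ from $t\mid \deg(\tilde f)$ — is immediate. One should also remark that the decomposition could a priori have factors over $\C$; this is handled at the very start by invoking Theorem~\ref{schinzel}, which lets us assume the decomposition is defined over $\Q$ before normalizing.
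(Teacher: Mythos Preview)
Your approach is correct and essentially identical to the paper's: form the monic integer polynomial $f(x)=c_n^{\,n-1}\tilde f(x/c_n)=x^n+c_{n-1}x^{n-1}+\cdots\in\Z[x]$, transport a hypothetical decomposition of $\tilde f$ to one of $f$, normalize as in \eqref{monic}, apply Theorem~\ref{monic.integer} to get $b_{k-1}\in\Z$, and read off $c_{n-1}=t\,b_{k-1}$ from the first line of $(*)$, contradicting $\gcd(c_{n-1},n)=1$. The paper's write-up is simply a streamlined version of your second paragraph; your initial detour through $c_n^{-1}\tilde f$ and the closing remark about Theorem~\ref{schinzel} are unnecessary, since the monic-integer reduction already forces the inner and outer factors into $\Z[x]$ directly.
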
 
\begin{proof}
Note that 
\[
c_n^{n-1}\tilde f(x)=(c_nx)^n+c_{n-1}(c_nx)^{n-1}+ \cdots + c_n^{n-1}c_0= f(c_nx),
\]
where $f(x)=x^n+c_{n-1}x^{n-1}+\cdots \in \Z[x]$. If $\tilde f(x)$ is decomposable, then so is $f(x)$ and we can write $f(x)=g(h(x))$ as in \eqref{monic}. From Theorem~\ref{monic.integer} it follows that $g, h\in \Z[x]$. Then from $c_{n-1}=tb_{k-1}$ in $(*)$, it follows that $\gcd (c_{n-1}, n)\geq t\geq 2$, a contradiction.
\end{proof}

\begin{remark}\label{criterion2}
Assume that monic $f\in \Z[x]$ is decomposable and write $f(x)=g(h(x))$ as in \eqref{monic}, so that $g, h\in \Z[x]$ by Theorem~\ref{monic.integer}. Clearly $n:=\deg f\geq 4$. As we have seen, a simple proof of Theorem~\ref{criterion} follows by comparison of coefficients in $(*)$. Note that from $(*)$ it also follows that 
\[
c_{n-2}=tb_{k-2}+{t \choose 2}b_{k-1}^2, \quad c_{n-3}=tb_{k-3}+{t \choose 2} b_{k-1}b_{k-2}+{t \choose 3}b_{k-1}^3.
\]
If $t>2$, it follows that $\gcd(c_{n-2}, t)>1$. Therefore if $\gcd(c_{n-2}, n)=1$ it follows that $t=2$. Assume without loss of generality that $g(x)$ is indecomposable, i.e.\@ write $f(x)$ as in \eqref{monic} with $g(x)$ indecomposable. It follows from Lemma~\ref{EQUIV} that essentially, i.e.\@ up to insertion of linear polynomials $\ell(x)$ and $\ell^{(-1)}(x)$, the only way to write $f(x)$ as a functional composition of two polynomials is 
\[
f(x)=g(x^2+bx+c), \quad \textnormal{where}\ g(x) \ \textnormal{is indecomposable}.
\]

If $t>3$, it follows that $\gcd(c_{n-3}, t)>1$. Thus if $\gcd(c_{n-3}, n)=1$ it follows that $t=2$ or $t=3$, and again by Lemma~\ref{EQUIV} it follows that essentially the only ways to write $f(x)$ as a functional composition of two polynomials are either $f(x)=g(h(x))$ where $g(x)$ is indecomposable and $\deg h=2$, or $g(x)$ is indecomposable and $\deg h=3$. 
\end{remark}

\begin{remark}\label{derivative}
Note that if $f\in \Q[x]$ is decomposable, then from Theorem~\ref{schinzel} it follows that $f(x)=g(h(x))$ for some $g, h\in \Q[x]$ with $\deg g\geq 2$ and $\deg h\geq 2$. Then $f'(x)=g'(h(x))h'(x)$. Since $\deg g'\geq 1$ and $\deg h'\geq 1$, it follows that $f'(x)$ is reducible. Thus a sufficient condition for $f(x)\in \Q[x]$ to be indecomposable is that $f'(x)$ is irreducible. 
\end{remark}

\begin{example}
Remark~\ref{derivative} can be  useful in practice. For example, it is a well-known result of Schur~\cite{Schur29} that any polynomial of type
\begin{equation}\label{schurtype}
S_n(x)=\pm 1+c_1x+c_2\frac{x^2}{2!}+\ldots +c_{n-1}\frac{x^{n-1}}{(n-1)!}\pm \frac{x^n}{n!}
\end{equation}
with $c_i\in \Z$ is irreducible in $\Q[x]$. It follows that any polynomial of type
\[
f(x)=c_0 \pm x+c_2\frac{x^2}{2!}+\dots + c_{n-1}\frac{x^{n-1}}{(n-1)!}\pm \frac{x^n}{n!}
\]
with $c_i\in \Z$ is indecomposable. Indeed, $f'(x)$ is of type \eqref{schurtype}, and is hence irreducible in $\Q[x]$. We further remark that it was shown in \cite{KS08}, by a different method,  that $f(x)$ is indecomposable in case $c_1=c_2=\cdots=c_n=1$ and $\pm$ is replaced by $+$. Thus, Remark~\ref{derivative} immediately  implies a more general result. Further note that $n!f(x)\in \Z[x]$, however neither Theorem~\ref{criterion} nor Remark~\ref{criterion2} are of help to conclude any results about possible decompositions of $f(x)$. 
\end{example}

\subsection{Critical points}\label{critical}
We now present another approach to finding possible decompositions of a polynomial.

For $f\in \C[x]$ and $\gamma\in \C$ let
\begin{equation}\label{def.delta}
\delta(f, \gamma)=\deg \gcd (f(x)-\gamma, f'(x)).
\end{equation}

\begin{lemma}\label{delta}
If $f(x)=g(h(x))$ with $\deg g>1$ then there exists $\gamma \in \C$ such that $\delta(f, \gamma)=\deg \gcd (f(x)-\gamma, f'(x))\geq \deg h$. 
\end{lemma}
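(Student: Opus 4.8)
The plan is to pick $\gamma$ to be a critical value of $g$, i.e.\@ $\gamma=g(\beta)$ where $\beta$ is a root of $g'$, and to track the multiplicities of roots of $f(x)-\gamma$ through the composition. Write $g(x)=c\prod_j (x-\beta_j)^{e_j}$ over $\C$, and choose $\beta=\beta_1$ to be a root of $g'$, so some $e_j\geq 2$; more carefully, since $\deg g>1$, the polynomial $g'$ has at least one root, and I take $\beta$ to be one of the $\beta_j$ with $e_j\geq 2$ — such a $\beta_j$ exists because if every $e_j$ equalled $1$ then $g$ would be separable and $\gcd(g,g')=1$, yet $g'$ still has a root $\beta$ with $g(\beta)$ a value of $g$, so in fact I should just set $\gamma=g(\beta)$ for an arbitrary root $\beta$ of $g'$ and argue that $x=h(x_0)$ picks up multiplicity. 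Let me restate: let $\beta\in\C$ be any root of $g'$ and set $\gamma=g(\beta)$.

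First I would factor $f(x)-\gamma = g(h(x))-g(\beta)$. Since $h$ is nonconstant and $\C$ is algebraically closed, $h(x)-\beta$ has a root $x_0\in\C$, and $h(x)-\beta = \lambda\prod_i (x-x_i)$ with $x_0$ among the $x_i$. The key computation is a multiplicity count at $x_0$: because $g(\beta)=\gamma$ and $g'(\beta)=0$, the analytic (or formal Taylor) expansion gives $g(h(x))-\gamma = \tfrac{g''(\beta)}{2}(h(x)-\beta)^2 + \cdots$ when $g''(\beta)\neq 0$, and more generally $g(y)-\gamma$ vanishes to order $\geq 2$ at $y=\beta$; hence $f(x)-\gamma$ vanishes to order $\geq 2\cdot\operatorname{ord}_{x_0}(h(x)-\beta)\geq 2$ at $x_0$. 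The cleaner route, avoiding Taylor bookkeeping: $f'(x)=g'(h(x))h'(x)$, so $h(x)-\beta$ divides $g'(h(x))$ divides $f'(x)$; and $h(x)-\beta$ divides $g(h(x))-g(\beta)=f(x)-\gamma$ as well (since $(y-\beta)\mid g(y)-g(\beta)$). Therefore $h(x)-\beta$ is a common divisor of $f(x)-\gamma$ and $f'(x)$, which already gives $\delta(f,\gamma)\geq \deg(h(x)-\beta)=\deg h$.

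Next I would package this: by \eqref{def.delta}, $\delta(f,\gamma)=\deg\gcd(f(x)-\gamma,f'(x))\geq \deg(h(x)-\beta)=\deg h$, since $h(x)-\beta$ is a common factor of both. This is exactly the assertion. The only subtlety to double-check is that $g'$ genuinely has a root, which holds because $\deg g'=\deg g-1\geq 1$ over $\C$; so such a $\beta$, and hence such a $\gamma$, exists. The main (and really only) obstacle is organizing the divisibility cleanly — namely recognizing the identity $(y-\beta)\mid (g(y)-g(\beta))$ combined with $f'(x)=g'(h(x))h'(x)$ and $g'(\beta)=0$ — after which the statement is immediate; there are no genus-theoretic or group-theoretic inputs needed here, in contrast to the earlier parts of the paper.
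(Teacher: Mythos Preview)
Your proposal is correct and, once you settle on the ``cleaner route,'' it is exactly the paper's argument: pick a root $\beta$ of $g'$ (which exists since $\deg g'\ge 1$), set $\gamma=g(\beta)$, and observe that $h(x)-\beta$ divides both $f(x)-\gamma$ and $f'(x)=g'(h(x))h'(x)$. The initial detour through multiplicities and Taylor expansions is unnecessary and can be dropped.
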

\begin{proof}
If $\beta$ is a root of $g'(x)$ (which exists since by the assumption $\deg g'(x)\geq 1$) and $\gamma=g(\beta)$, then $h(x)-\beta$ divides both $f(x)-\gamma$ and $f'(x)$. 
\end{proof}
\begin{corollary}\label{gamma}
If $f\in \C[x]$ is such that $\deg f>1$ and $\delta(f, \gamma)\leq 1$ for all $\gamma \in \C$, then $f(x)$ is indecomposable.
\end{corollary}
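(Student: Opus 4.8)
The plan is to argue by contraposition, and the entire content is an immediate application of Lemma~\ref{delta}. So suppose $f(x)$ is \emph{not} indecomposable; since $\deg f>1$, this means there exist $g,h\in\C[x]$ with $\deg g>1$, $\deg h>1$ and $f(x)=g(h(x))$.

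Now I would simply invoke Lemma~\ref{delta}: because $\deg g>1$, there is some $\gamma\in\C$ (namely $\gamma=g(\beta)$ for $\beta$ a root of $g'$) with
\[
\delta(f,\gamma)=\deg\gcd(f(x)-\gamma,f'(x))\geq \deg h.
\]
Since $\deg h>1$, we get $\delta(f,\gamma)\geq 2$, which directly contradicts the hypothesis that $\delta(f,\gamma)\leq 1$ for every $\gamma\in\C$. Hence no such decomposition exists, and $f(x)$ is indecomposable.

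There is essentially no obstacle here: the corollary is nothing more than the contrapositive of Lemma~\ref{delta} combined with the observation that $\deg h>1$ forces $\delta(f,\gamma)>1$. The only thing worth spelling out for the reader is the logical shape of the argument (one assumes decomposability, produces the bad $\gamma$ from Lemma~\ref{delta}, and contradicts the standing hypothesis), so the write-up can be kept to two or three sentences.

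\begin{proof}
We prove the contrapositive. Suppose $f(x)$ is decomposable, so that $f(x)=g(h(x))$ for some $g,h\in\C[x]$ with $\deg g>1$ and $\deg h>1$. By Lemma~\ref{delta} there exists $\gamma\in\C$ with $\delta(f,\gamma)\geq \deg h$, and since $\deg h>1$ this gives $\delta(f,\gamma)\geq 2>1$. Thus it is not the case that $\delta(f,\gamma)\leq 1$ for all $\gamma\in\C$. Equivalently, if $\delta(f,\gamma)\leq 1$ for all $\gamma\in\C$, then $f(x)$ is indecomposable.
\end{proof}
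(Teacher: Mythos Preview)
Your proof is correct and is exactly the argument the paper has in mind: the corollary is stated without proof precisely because it is the immediate contrapositive of Lemma~\ref{delta} together with $\deg h>1$. There is nothing to add.
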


Corollary~\ref{gamma} was first used, to the best of our knowledge, by Beukers, Shorey and Tijdeman~\cite{BST99} to prove that for arbitrary integer $m\geq 1$ the polynomial $f(x)=x(x+1)(x+2)\cdots(x+m)$ is indecomposable. It was further used  by Dujella and Tichy~\cite{DG06} to study the possible decompositions of Chebyshev polynomials of the second kind, as well as by Stoll~\cite{T03} to prove that certain classes of orthogonal polynomials are indecomposable. 

By Lemma~\ref{delta} it follows that if $f(x)\in \C[x]$ is such that $\delta(f, \gamma)\leq 2$ for all $\gamma \in \C$, then $f(x)$ is either indecomposable or $f(x)=g(h(x))$ where $\deg h=2$ and $g(x)$ is indecomposable. Compare with Remark~\ref{criterion2}. 

Note that if all the stationary points of $f(x)$ are simple and $S_f$ denotes the set of stationary points of $f(x)$, then clearly $\delta(f, \gamma)=\#\{\alpha\in S_f : f(\alpha)=\gamma\}$. In practice, sometimes the numeric evidence is obvious, but proving that no two roots $\alpha, \beta\in \C$ of $f'(x)$ are such that $f(\alpha)=f(\beta)$ is out of reach. Confer also \cite[Chap.\@~3]{T03}.

\section{Lacunary polynomials}\label{lacunary}

A polynomial with the number of terms small in comparison to the degree is called a \emph{lacunary} polynomial or \emph{sparse} polynomial (a more precise definition will not be needed). Lacunary polynomials have received a special attention in the literature on polynomial decomposition. It was shown by Schinzel that, loosely speaking, ``a power of a polynomial with many terms has many terms'', see \cite[Sec.\@ 2.6., Thm.\@ 30]{S00}. In  \cite[Chap.\@ 2]{S00}, further results on reducibility and decomposability of lacunary polynomials can be found. Schinzel also conjectured that the number of terms of $g(h(x))$ tends to infinity as the number of terms of $h(x)$ tends to infinity. This was proved in a remarkable paper by Zannier~\cite{Z08}. As a step in the proof, another result of Zannier was used, namely the following theorem from \cite{Z07}.

\begin{theorem}\label{Zannier}
Let $K$ be a field with $\textnormal{char}(K)=0$, and let $f(x)\in K[x]$ have $l>0$ nonconstant terms. Assume that $f(x)=g(h(x))$, where $g, h\in K[x]$ and where $h(x)$ is not of shape $ax^k+b$ for $a, b\in K$. Then
\[
\deg f+l-1\leq 2l(l-1)\deg h.
\]
In particular, $\deg g\leq 2l(l-1)$.
\end{theorem}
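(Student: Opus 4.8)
The plan is to study the equation $f(x) = g(h(x))$ by isolating the monomials of $h$ that actually "survive" as separate terms of $f$. Write $h(x) = c_d x^d + c_{d-1}x^{d-1} + \cdots + c_0$ and consider the subset $S \subseteq \{0,1,\dots,d\}$ of exponents $j$ for which $c_j \neq 0$; since $h$ is not of the form $ax^k+b$ we know $\#S \geq 3$ (there are at least two nonconstant terms, of distinct positive degrees). Expanding $g(h(x)) = \sum_{i} a_i h(x)^i$ and then multinomially expanding each power $h(x)^i$, every term of $f$ arises as a $\K$-linear combination of monomials $x^e$ where $e = \sum_{j \in S} i_j j$ with $\sum_j i_j \leq \deg g$. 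First I would count how many distinct exponents $e \le \deg f$ can possibly occur: the number of lattice points $(i_j)_{j\in S}$ with $i_j \ge 0$ and $\sum i_j = t$ is $\binom{t + \#S - 1}{\#S - 1}$, but the relevant observation is sharper — one wants a lower bound on the \emph{gap structure} of $f$ forcing $\deg f$ to be small relative to $l$ and $\deg h$.

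The key idea I would use is the following dichotomy between "high" and "low" terms. The top two exponents of $h$, say $d > d'$ (both in $S$, so $d' \ge 1$), generate via the expansion of $h^i$ a cluster of monomials near the top; more precisely, for the leading power $i = \deg g =: n$, the product $h(x)^n$ has among its top terms the monomials $x^{nd}, x^{(n-1)d + d'}, x^{(n-2)d + 2d'}, \dots$ whose exponents are $nd - k(d - d')$ for $k = 0, 1, \dots$ and these have nonzero coefficients (binomial coefficients times powers of $c_d, c_{d'}$, all nonzero in characteristic $0$). The crucial point is that the exponents contributed by \emph{lower} powers $h(x)^i$ with $i < n$ are all $\le id \le (n-1)d = nd - d < nd - (d-d')$ provided... — here one has to be careful, so the honest statement is: the top $\lceil d/(d-d')\rceil$-or-so terms coming from $h^n$ cannot be cancelled by anything, giving many genuine terms of $f$ near the top. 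Iterating this "no cancellation at the top" reasoning down through the powers, and organizing the bookkeeping so that each of the $l$ terms of $f$ is charged against a bounded window of exponents of width governed by $\deg h$, yields a bound of the shape $\deg f \lesssim l^2 \deg h$. The precise constant $2l(l-1)$ and the additive $l-1$ should fall out of counting the number of exponent-windows (of length roughly $2\deg h$) needed to cover $[0, \deg f]$ against the $l$ available terms.

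The main obstacle — and where I expect the real work to be — is controlling \emph{cancellation}: a priori many multinomial contributions could conspire to cancel, so "$f$ has $l$ terms" is a much weaker hypothesis than "the expansion of $g(h(x))$ has $l$ monomials with nonzero coefficient before collecting." The right tool here is to look at $f'(x) = g'(h(x)) h'(x)$, or better, to pass to the substitution that turns the problem into one about $\overline{\K}(x)$: near each pole/branch one can read off $\deg h$ from the ramification, and the number of terms $l$ controls the number of "moving" critical values. Concretely I would try the Zannier-style argument via the Newton polygon / valuation at $x = 0$ combined with a second valuation (e.g. treating $x$ as a variable over a suitable $t$-adic completion), using that $f$ having few terms forces $f$ and its "shifts" to satisfy a low-rank linear relation, then transporting this to a relation on powers of $h$ that, by a Vandermonde/Wronskian non-degeneracy argument (valid since $\charp \K = 0$), is impossible once $\deg g$ is large. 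The inequality $\deg g \le 2l(l-1)$ is then immediate from $\deg f = \deg g \cdot \deg h$ together with $\deg h \ge 1$ and the main estimate, and I would present it as the stated corollary.
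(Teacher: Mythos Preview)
The paper does not prove this theorem; it is quoted from Zannier~\cite{Z07} without proof, so there is no in-paper argument to compare against. I can therefore only assess whether your proposal stands on its own.

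It does not. What you have written is a sequence of heuristics rather than a proof. The first two paragraphs correctly observe that the top few monomials of $h(x)^n$ cannot be cancelled by lower powers of $h$, but you yourself flag the difficulty (``here one has to be careful'') and never resolve it; the assertion that ``iterating this reasoning down through the powers'' yields the bound is precisely the hard step, and nothing you wrote carries it out. The obstacle you name in the third paragraph --- cancellation among multinomial contributions --- is genuine and is not handled by the counting in paragraphs one and two. Your final paragraph then lists several possible tools (derivatives, Newton polygons, $t$-adic valuations, Wronskians) without committing to any of them or showing that any of them actually closes the gap. No derivation of the specific constant $2l(l-1)$ or of the additive $l-1$ appears anywhere.

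For orientation: Zannier's actual argument in \cite{Z07} is not an elementary term-counting; it goes through a Brownawell--Masser / Mason--Stothers type inequality for vanishing sums of $S$-units in the function field $\overline{K}(x)$. One reads the identity $\sum_j a_j x^{n_j} - \sum_i b_i h(x)^i = 0$ as a linear relation among functions whose zeros and poles lie in a finite set $S$ (containing $0$, $\infty$, and the zeros of a suitable shift of $h$), passes to a minimal vanishing subsum, and invokes the genus-zero bound on the degrees in such a relation; the constant $2l(l-1)$ comes out of the bookkeeping on $|S|$ and the number of summands. Your closing mention of a ``Wronskian non-degeneracy argument'' is pointing in the right direction, but the proposal contains no implementation of it, and the elementary top-term analysis of the first two paragraphs cannot be completed into a proof by itself.
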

Let $K$ be a field with $\textnormal{char}(K)=0$ and $f(x)\in K[x]$ with $l>0$ nonconstant terms be decomposable and write without loss of generality
\begin{equation}\label{wlog}
f(x)=g(h(x))\  \textnormal{with}\  g, h\in K[x], \ \deg g, \deg h \geq 2,\ h(x) \ \textnormal{monic and} \ h(0)=0. 
\end{equation}
(We may indeed do so, since if $f(x)=\tilde g(\tilde h(x))$, then clearly also
\[
f(x)=\tilde g(ax+\tilde h(0))\circ \frac{1}{a}(\tilde h(x)-\tilde h(0)).
\]
where $a$ is the leading coefficient of $\tilde h(x)$.) Then Theorem~\ref{Zannier} implies that $\deg f+l-1\leq 2l(l-1)\deg h$ unless $h(x)=x^k$. 
Note that 
\[
a_1x^{n_1}+a_2x^{n_2}+\cdots+a_lx^{n_l}+a_{l+1}=f(x)=g(x)\circ x^k
\]
exactly when $k\mid n_i$ for all $i=1, 2, \ldots, l$. 

For example, let $f(x)$ be a trinomial, i.e.\@ $f(x)=a_1x^{n_1}+a_2x^{n_2}+a_3$ with $a_1a_2a_3\in K$, $a_1a_2\neq 0$, $n_1, n_2\in \N$, and $n_1>n_2$. Assume that $f(x)$ is decomposable and write it without loss of generality as in \eqref{wlog}. It follows that $\deg g \leq 3$ unless $h(x)=x^k$. 
In this case we have moreover the following stronger result proved by Fried and Schinzel~\cite{FS72} in 1972.

\begin{theorem}\label{trinom}
Let $K$ be a field with $\textnormal{char}(K)=0$. Assume that $f(x)=a_1x^{n_1}+a_2x^{n_2}+a_3$, with $a_1a_2a_3\in K$, $a_1a_2\neq 0$, $n_1, n_2\in \N$ and $n_1>n_2$, is decomposable and write without loss of generality $f(x)=g(h(x))$ with $g, h\in K[x]$ as in \eqref{wlog}. 
Then $h(x)=x^k$ and $g(x)=a_1x^{n_1/k}+a_2x^{n_2/k}+a_3$ for some $k\in \N$ with $k\mid \gcd(n_1, n_2)$.
\end{theorem}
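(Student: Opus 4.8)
The plan is to start from the decomposition $f(x)=g(h(x))$ normalized as in \eqref{wlog}, and to apply Theorem~\ref{Zannier} to conclude that, unless $h(x)=x^k$, we have $\deg g\leq 3$; so the argument naturally splits into the easy case $h(x)=x^k$ and the case $\deg g\in\{2,3\}$ with $h$ not a power of $x$. In the case $h(x)=x^k$, I would observe that $g(x^k)=a_1x^{n_1}+a_2x^{n_2}+a_3$ forces every exponent occurring in $g$ to be an integer and every exponent of $f$ to be divisible by $k$, hence $k\mid\gcd(n_1,n_2)$ and $g(x)=a_1x^{n_1/k}+a_2x^{n_2/k}+a_3$, which is exactly the asserted conclusion. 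So the whole content of the theorem is to rule out the remaining possibility that $h(x)$ is not of the form $x^k$ (and, a fortiori since we may also allow the shift, not of the form $ax^k+b$, though after the normalization $h$ monic with $h(0)=0$ the only remaining escape is a genuine non-monomial $h$).

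So assume for contradiction that $h(x)$ is not a monomial, so $\deg g\in\{2,3\}$ by Theorem~\ref{Zannier}. The key idea I would use is differentiation: from $f(x)=g(h(x))$ we get $f'(x)=g'(h(x))h'(x)$, and $f'(x)=n_1a_1x^{n_1-1}+n_2a_2x^{n_2-1}=x^{n_2-1}\bigl(n_1a_1x^{n_1-n_2}+n_2a_2\bigr)$, whose nonzero roots are exactly the $(n_1-n_2)$-th roots of $-n_2a_2/(n_1a_1)$ — in particular they are all \emph{simple}, and together with $0$ (with multiplicity $n_2-1$) they account for all roots of $f'$. Now $g'(h(x))$ divides $f'(x)$, and $g'$ has degree $1$ or $2$; write $\beta$ for a root of $g'$. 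Then $h(x)-\beta$ divides $f'(x)$, so every root of $h(x)-\beta$ is a root of $f'(x)$. Since the only multiple root of $f'$ is $0$, and $h(x)-\beta$ has degree $\deg h=\deg f/\deg g\geq 2$, the polynomial $h(x)-\beta$ can have a repeated root only at $0$; moreover all the simple nonzero roots of $f'$ lie on the circle of radius $|{-n_2a_2/(n_1a_1)}|^{1/(n_1-n_2)}$, so they are distinct and $h(x)-\beta$ splits into distinct linear factors drawn from $\{x\}\cup\{\text{those roots}\}$. Comparing this with the structure of $h$ — and using $h(0)=0$, so $h(x)-\beta$ has $0$ as a root iff $\beta=0$ — I would push to show $h'(x)$ is forced to be a monomial: indeed $h'(x)$ divides $f'(x)$, and $h'(x)$ has degree $\deg h-1$; its roots are among the roots of $f'$, and one then argues (using that if $\deg g=2$ there is a single critical value and if $\deg g=3$ there are at most two) that the critical points of $f$ coming through $h$ must all collapse onto $x=0$, which forces $h'(x)=cx^{\deg h-1}$, hence $h(x)=x^{\deg h}$ (using $h$ monic, $h(0)=0$) — contradicting the assumption that $h$ is not a monomial.

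The step I expect to be the main obstacle is making the critical-point bookkeeping in the previous paragraph fully rigorous: one must be careful because a root of $f'(x)=g'(h(x))h'(x)$ can come either from $h'$ or from $g'(h(x))$, and a priori $h(x)$ could take the same critical value $\beta$ at several of the simple nonzero roots of $f'$, so that $h(x)-\beta$ contributes several of those roots while $h'$ contributes none of them. The clean way around this is to count with multiplicity: since $f'$ has exactly one multiple root (namely $0$, of multiplicity $n_2-1$) and $n_1-n_2$ further simple roots, while $h'(x)$ (degree $\deg h-1$) together with the pullbacks $\{h(x)=\beta : g'(\beta)=0\}$ must reproduce $f'$ exactly, a Riemann–Hurwitz / direct degree count on $h$ shows the total ramification of $h$ over its critical values $\beta$ is at most $2(\deg h-1)$ with equality only in the totally-ramified (monomial) case, and the presence of $\deg h - 1$ ramification forced \emph{over a single point} (because all the ``extra'' multiplicity of $f'$ sits at the single point $0$) pins $h$ down to $x^{\deg h}$ up to the normalizations already imposed. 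I would present this as the heart of the proof, citing Lemma~\ref{delta} and the $\delta(f,\gamma)$ formalism of Section~\ref{critical} to organize the counting, and relegate the routine verifications (that $g$ then has the stated trinomial form, and that $k\mid\gcd(n_1,n_2)$) to a sentence each. An alternative, possibly shorter, route for the final contradiction is to invoke the quantitative bound $\deg f+l-1\leq 2l(l-1)\deg h$ of Theorem~\ref{Zannier} with $l=2$, giving $\deg f+1\leq 4\deg h$, i.e. $\deg g\leq 3$ with $\deg g=\deg f/\deg h$, and then treat $\deg g=2$ and $\deg g=3$ by hand: expand $g(h(x))$ and compare coefficients of the trinomial $f$, using that $f$ has only three nonzero coefficients to force almost all coefficients of $h$ to vanish — this is more computational but avoids the ramification language, and I would fall back on it if the counting argument proves awkward to write cleanly.
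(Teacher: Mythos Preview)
The paper does not actually give its own proof of Theorem~\ref{trinom}: it attributes the result to Fried and Schinzel (1972) and instead \emph{illustrates their method} by proving the quadrinomial theorem that follows. That method is quite different from yours. It does not invoke Theorem~\ref{Zannier} at all; rather, one normalizes to $\tilde f(x)=x^{n_1}+\tilde a_2x^{n_2}$ and $\tilde f=\tilde g(\tilde h)$ with $\tilde g,\tilde h$ monic and $\tilde g(0)=\tilde h(0)=0$, factors $\tilde g(x)=\prod_i(x-x_i)^{\alpha_i}$ over $\overline K$, and observes that exactly one factor $\tilde h(x)-x_1=\tilde h(x)$ is divisible by $x$, say $\tilde h(x)=x^{l}\hat h(x)$ with $\hat h(0)\neq 0$. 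One then sees that $\hat h(x)^{\alpha_1}$ and each $(\tilde h(x)-x_i)^{\alpha_i}$ with $i\geq2$ divide the binomial $x^{n_1-n_2}+\tilde a_2$; since a binomial over a field of characteristic~$0$ has only simple nonzero roots (a trivial instance of Haj\'os's Lemma~\ref{Hajos}), this forces $\hat h$ constant (hence $\tilde h=x^{l}$), or else all $\alpha_i=1$ and $l\alpha_1=n_2$ gives $l=n_2$. In the latter case one compares the second-highest term of $\tilde h$ with that of $\tilde f$ exactly as in the quadrinomial proof: if $\tilde h(x)=x^{k}+bx^{m}+\cdots$ with $b\neq0$ and $m<k$, then the expansion of $\tilde g(\tilde h)$ forces $k-m=n_1-n_2$, while $m\geq l=n_2$ gives $k\geq n_1$, contradicting $\deg\tilde g\geq2$. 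No bound on $\deg g$ is ever needed.

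Your route is genuinely different: you first appeal to Zannier's Theorem~\ref{Zannier} to get $\deg g\leq3$, and only then try to dispose of $\deg g\in\{2,3\}$ by analysing $f'(x)=g'(h(x))h'(x)$. Two comments. First, using Theorem~\ref{Zannier} here is heavy machinery for a 1972 result---Zannier's theorem is far more general and its proof much harder; the paper's point in stating Theorem~\ref{trinom} right after the trinomial consequence of Theorem~\ref{Zannier} is precisely that Fried--Schinzel gives a \emph{sharper} conclusion by elementary means. Second, the part you flag as ``the main obstacle'' really is one: the Riemann--Hurwitz style counting you sketch does not by itself force $h'(x)$ to be a monomial (a priori the $n_1-n_2$ simple nonzero roots of $f'$ can be split arbitrarily between $h'(x)$ and $g'(h(x))$). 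To finish cleanly one ends up proving a statement such as ``if $p(x)^2$ is a trinomial with nonzero constant term then $p$ is a binomial,'' and similarly a cube case for $\deg g=3$; these are exactly Haj\'os-type facts, so you recover the key ingredient of the Fried--Schinzel argument through a longer detour. Your fallback of expanding $g(h(x))$ for $\deg g=2,3$ and comparing coefficients would also work, but the factorisation-plus-Haj\'os approach in the paper handles all $\deg g$ at once and is both shorter and more transparent.
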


Theorem~\ref{trinom} together with Theorem~\ref{T:BT} was used in \cite{PPS11} and \cite{S12} to show that, under some reasonable assumptions, two trinomials with rational coefficients can have equal values only finitely many times. 
We illustrate the approach of Fried and Schinzel~\cite{FS72} by proving the following result on quadrinomials. To that end we will need the following lemma, which was already used by Zannier~\cite{Z07}, as well as in \cite{PPS11}, to study related questions. 

\begin{lemma}[Haj\'{o}s's lemma]\label{Hajos}
Let $K$ be a field with $\textnormal{char}(K)=0$. If $g(x)\in K[x]$ with $\deg g\geq 1$ has a zero $\beta\neq 0$ of mutiplicity $m$, then $g(x)$ has at least $m+1$ terms. 
\end{lemma}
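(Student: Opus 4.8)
The plan is to prove Haj\'os's lemma by induction on the multiplicity $m$, using the derivative to reduce $m$ and the following easy observation about how the number of terms behaves under differentiation: if $g(x)\in K[x]$ has a nonzero root of multiplicity $m\geq 2$, then $g'(x)$ has a nonzero root of multiplicity $m-1$, and moreover $g'(x)$ has at most as many terms as $g(x)$ has nonconstant terms -- indeed differentiation sends $x^j\mapsto jx^{j-1}$, killing only the constant term and, crucially in characteristic $0$, no other term (since $j\neq 0$ for $j\geq 1$). So the number of terms of $g'(x)$ is exactly (number of terms of $g$) minus $1$ if $g$ has a nonzero constant term, and equal to (number of terms of $g$) otherwise.

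For the base case $m=1$ the statement is trivial: any $g$ with $\deg g\geq 1$ has at least $2$ terms, because a single-term polynomial $cx^n$ with $n\geq 1$ has only the root $0$. For the inductive step, suppose the lemma holds for multiplicity $m-1$ and let $g(x)$ have a nonzero root $\beta$ of multiplicity $m\geq 2$. Then $g'(x)$ has $\beta$ as a root of multiplicity $m-1$, and $g'\neq 0$ since $\charp(K)=0$ and $\deg g\geq m\geq 1$, so by induction $g'(x)$ has at least $m$ terms. If $g(x)$ has a nonzero constant term, then $g$ has at least (number of terms of $g'$) $+1\geq m+1$ terms and we are done. The only remaining case is when $g(x)$ has zero constant term, i.e.\ $g(0)=0$; but then $g$ has a root at $0$, and since $\beta\neq 0$ the polynomial $g$ has the two distinct roots $0$ and $\beta$, the latter of multiplicity $m$, so $\deg g\geq m+1$. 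Here I would invoke the fact that a polynomial with zero constant term which is divisible by $x$ and by $(x-\beta)^m$ with $\beta\neq 0$ can be written $g(x)=x\cdot q(x)$ with $q(\beta)=0$ of multiplicity $m$ (if $x^2\mid g$, factor out the full power of $x$ and apply induction on $\deg g$ instead, or simply note $q$ still has the nonzero root $\beta$ of multiplicity $m$) -- the cleanest route is a secondary induction on $\deg g$: factoring $g(x)=x^s\tilde g(x)$ with $\tilde g(0)\neq 0$ and $s\geq 1$, the polynomial $\tilde g$ has the same number of terms as $g$, has a nonzero constant term, has the nonzero root $\beta$ of multiplicity $m$, and has strictly smaller degree, so the previous case applies to $\tilde g$ and gives $\geq m+1$ terms.

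The main obstacle is organizing the case split cleanly so the induction is genuinely well-founded: the derivative step lowers $m$ but the "factor out $x$" step lowers $\deg g$ while keeping $m$ fixed, so one must either do a double induction (on $m$, and within that on $\deg g$) or phrase everything as a single induction on $\deg g$ with the multiplicity carried along. I expect the derivative-count observation itself to be completely routine (it is exactly where $\charp(K)=0$ enters, and the only subtlety is bookkeeping the constant term), and the genuinely delicate point is merely making sure the reduction $g\mapsto x^{-s}g$ does not secretly change the multiplicity of $\beta$ or the term count -- both are immediate since multiplication by $x^s$ for $s\geq 1$ is a bijection on monomials and $\beta\neq 0$. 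I would present the argument as induction on $\deg g$, splitting on whether $g(0)=0$, and in the case $g(0)\neq 0$ passing to $g'$ and invoking the inductive hypothesis for the smaller degree polynomial $g'$ (whose relevant root multiplicity is $m-1$, so one also needs the statement for all smaller multiplicities, which the induction on degree supplies automatically since $\deg g'<\deg g$).
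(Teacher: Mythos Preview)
Your argument is correct. The paper does not give its own proof of Haj\'os's lemma; it merely cites \cite[Sec.~2.6, Lemma~1]{S00}. The induction you describe---passing to $g'$ when $g(0)\neq 0$ (using $\charp(K)=0$ so that differentiation loses exactly the constant term and no other), and factoring out $x^s$ when $g(0)=0$---is precisely the standard proof found in that reference, so there is nothing to compare.
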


The proof of Lemma~\ref{Hajos} can be found in \cite[Sec.\@~2.6, Lemma~1]{S00}.

\begin{theorem}
Let $K$ be a field with $\textnormal{char}(K)=0$. Let $n_1>n_2>n_3$ be positive integers such that  $n_1+n_3> 2n_2$, and $a_1, a_2, a_3, a_4\in K$ such that 
$a_1a_2a_3\neq 0$. Assume that $f(x)=a_1x^{n_1}+a_2x^{n_2}+a_3x^{n_3}+a_4$ is decomposable and write $f(x)=g(h(x))$ with $g, h\in K[x]$ as in \eqref{wlog}. Then $h(x)=x^k$ and $g(x)=a_1x^{n_1/k}+a_2x^{n_2/k}+a_3x^{n_2/k}+a_4$ for some $k\in \N$ with $k\mid \gcd(n_1, n_2, n_3)$.
\end{theorem}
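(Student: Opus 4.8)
The plan is to follow the strategy of Fried and Schinzel for trinomials, using Theorem~\ref{Zannier} to bound $\deg g$ and then eliminating the remaining cases by a careful term-counting argument based on Haj\'os's lemma (Lemma~\ref{Hajos}). First I would observe that, since $f(x)$ has $l=3$ nonconstant terms, Theorem~\ref{Zannier} gives $\deg g \le 2\cdot 3\cdot 2 = 12$ unless $h(x)=x^k$; and if $h(x)=x^k$ the conclusion is immediate, because $a_1x^{n_1}+a_2x^{n_2}+a_3x^{n_3}+a_4 = g(x)\circ x^k$ forces $k\mid n_1$, $k\mid n_2$, $k\mid n_3$ and $g(x)=a_1x^{n_1/k}+a_2x^{n_2/k}+a_3x^{n_3/k}+a_4$, which is exactly the claimed shape (here the hypothesis $a_1a_2a_3\ne 0$ ensures no collapse of terms). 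So from now on I assume $h(x)\ne x^k$, hence $\deg g \le 12$, and I aim for a contradiction, using crucially the hypothesis $n_1+n_3>2n_2$.

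The key device is to differentiate: from $f=g\circ h$ we get $f'(x)=g'(h(x))h'(x)$, and more generally the stationary points of $f$ come in the ``fibers'' $h^{-1}(\beta)$ over roots $\beta$ of $g'$. Concretely, if $\beta$ is a root of $g'(x)$ of multiplicity $\mu$, then $h(x)-\beta$ divides $f(x)-g(\beta)$ with high multiplicity, and $h(x)-\beta$ has degree $\ge 2$. The heart of the Fried--Schinzel argument is to exploit that $f'(x)=a_1n_1x^{n_1-1}+a_2n_2x^{n_2-1}+a_3n_3x^{n_3-1}$ is itself a trinomial (divisible by $x^{n_3-1}$), so $f'(x)/x^{n_3-1}$ is a trinomial whose nonzero roots must all be simple by Haj\'os's lemma — wait, that is too strong; rather, a nonzero root of $f'(x)/x^{n_3-1}$ of multiplicity $m$ forces that trinomial to have $\ge m+1$ terms, hence $m\le 2$. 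Thus every nonzero critical point of $f$ has multiplicity at most $2$ as a root of $f'$. On the other hand, writing $h(x)-\beta=\prod(x-\alpha_i)$ over $\overline K$, each such $\alpha_i\ne 0$ (if $\alpha_i=0$ then, since $h(0)=0$, we'd need $\beta=0$, the one exceptional fiber) is a root of $f'$ of multiplicity $\ge$ (multiplicity of $\alpha_i$ in $h-\beta$). Playing these two bounds against each other, together with the count $\sum_\beta \deg(h-\beta)\cdot(\text{stuff}) = \deg f' = n_1-1$ and $\deg f = n_1$ versus $\deg g\le 12$, should pin down $\deg h$ and $\deg g$ to a very short list.

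For each surviving pair $(\deg g,\deg h)$ I would then use the condition $n_1+n_3>2n_2$ directly on the expansion $f(x)=h(x)^t+a_{t-1}h(x)^{t-1}+\cdots+a_0$ with $t=\deg g$: comparing coefficients near the top forces $h$ to be close to a pure power $x^k$, and the middle exponent $n_2$ cannot be positioned so that $n_1+n_3>2n_2$ unless in fact $h(x)=x^k$ — which was excluded. (The inequality $n_1+n_3>2n_2$ is exactly the ``gap'' condition that rules out the Dickson-type and binomial-composition configurations, paralleling the role of $n_1>n_2$ alone in the trinomial case of Theorem~\ref{trinom}.) The main obstacle I anticipate is the bookkeeping in this last step: one must show, for a genuinely decomposable quadrinomial with $h\ne x^k$, that the forced arithmetic relations among $n_1,n_2,n_3$ and the exponents appearing in $h(x)^t$ are incompatible with $n_1+n_3>2n_2$, and doing this uniformly rather than case-by-case over all $(\deg g,\deg h)$ with $\deg g\le 12$ requires organizing the Haj\'os-type term counts efficiently. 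A cleaner alternative, which I would try first, is to avoid the crude bound $\deg g\le 12$ altogether and argue abstractly: apply Haj\'os's lemma to $g$ itself at its nonzero roots together with the fiber structure of $h$ to show that if $h\ne x^k$ then $f=g\circ h$ must have a nonzero root of its derivative of multiplicity $\ge 3$ whenever $n_1+n_3>2n_2$, contradicting the trinomial bound on $f'$ above.
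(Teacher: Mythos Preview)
Your proposal is a plan rather than a proof, and the crucial step---how exactly the hypothesis $n_1+n_3>2n_2$ produces a contradiction---is never carried out. You invoke Theorem~\ref{Zannier} to get $\deg g\le 12$, apply Haj\'os to $f'$, and then say the remaining cases ``should'' be eliminated by bookkeeping or by an unspecified ``cleaner alternative''. Neither alternative is executed, and it is not clear that the derivative route leads anywhere efficient: the multiplicity bound you extract (nonzero roots of $f'$ have multiplicity $\le 2$) does not by itself constrain the \emph{position} of the exponents $n_1,n_2,n_3$, which is what the inequality $n_1+n_3>2n_2$ speaks to.

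The paper's argument is both shorter and structurally different: it uses neither Theorem~\ref{Zannier} nor $f'$. After normalizing so that $\tilde f(x)=x^{n_1}+\tilde a_2 x^{n_2}+\tilde a_3 x^{n_3}$ and $\tilde f=\tilde g\circ\tilde h$ with $\tilde h$ monic, $\tilde h(0)=0$, one factors $\tilde g(x)=\prod_i(x-x_i)^{\alpha_i}$. Since $x\mid\tilde f$ and $\tilde h(0)=0$, the root $x_1=0$ is distinguished, and one writes $\tilde h(x)=x^{l}\hat h(x)$ with $\hat h(0)\ne 0$; then $l\alpha_1=n_3$ and $\hat h(x)^{\alpha_1}$ divides the trinomial $x^{n_1-n_3}+\tilde a_2 x^{n_2-n_3}+\tilde a_3$. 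Haj\'os applied \emph{here} (not to $f'$) gives $\alpha_1\le 2$, hence $l\ge n_3/2$. Now comes the step your sketch is missing: comparing the two top terms of $\tilde f$ with those of $\tilde h(x)^t$ (exactly the device $(*)$ from Section~\ref{SecSecond}) forces the gap between the two leading exponents of $\hat h$ to equal $n_1-n_2$, so in particular
\[
n_1-n_2 \;\le\; \deg\hat h \;=\; k-l \;\le\; \frac{n_1}{2}-\frac{n_3}{2},
\]
using $t=\deg\tilde g\ge 2$ for $k\le n_1/2$ and $l\ge n_3/2$ from above. This is $2(n_1-n_2)\le n_1-n_3$, i.e.\ $n_1+n_3\le 2n_2$, contradicting the hypothesis. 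The entire role of $n_1+n_3>2n_2$ is this single inequality; no case analysis over $\deg g\le 12$ is needed.
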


\begin{proof}
Let $\tilde a_i=a_i/a_1$, $i=2, 3$ and $\tilde f(x)=x^{n_1}+\tilde a_2x^{n_2}+\tilde a_3x^{n_3}$. Since $f(x)$ is decomposable, so is $\tilde f(x)$. Write $\tilde f(x)$ as in \eqref{wlog} (with $g(x)$ and $h(x)$ replaced by $\tilde g(x)$ and $\tilde h(x)$), i.e. $\tilde f(x)=\tilde g(\tilde h(x))$, $\tilde h(x)$ is monic and $\tilde h(0)=0$.
Then $\tilde g(x)$ is monic and well and $\tilde g(0)=0$. Let $\deg \tilde g=t$ and $\deg \tilde h=k$. Note that if $\tilde h(x)=x^k$ then $k\mid n_i$ for all $i=1, 2, 3$, and $\tilde g(x)=x^{n_1/k}+\tilde a_2x^{n_2/k}+\tilde a_3x^{n_3/k}$. Then also $h(x)=x^k$ and $g(x)=a_1x^{n_1/k}+a_2x^{n_2/k}+a_3x^{n_2/k}+a_4$ for some $k\in \N$ with $k\mid \gcd(n_1, n_2, n_3)$.  Assume henceforth $\tilde h(x)\neq x^k$.  Let
\[
\tilde g(x)=\prod_{i=1}^r (x-x_i)^{\alpha_i},\   \textnormal{where} \ x_i\neq x_j\  \textnormal{for}\ i\neq j, \ \textnormal{and}\  \alpha_i\in \N.
\]
Note that $\alpha_1+\cdots+\alpha_r=t$ and
\[
\tilde f(x)=\tilde g(\tilde h(x))=\prod_{i=1}^r (\tilde h(x)-x_i)^{\alpha_i},
\]
 Since $x \mid \tilde f(x)$ and $\tilde h(x)-x_i$ are relatively prime in pairs, it follows that exactly one factor, say $\tilde h(x)-x_1$, is divisible by $x$, and hence 
\[
\tilde h(x)-x_1= x^l\hat h(x),\ \textnormal{where} \ l\in \N,\  \hat h(x)\in \overline K[x],\ \hat h(0)\neq 0.
\]
 Note that from $\tilde h(0)=0$ it follows that $\tilde h(x)=x^l\hat h(x)$. Further note that 
\begin{equation}\label{tildeg}
l\alpha_1=n_3\quad \& \quad \hat h(x)^{\alpha_1} \mid x^{n_1-n_3}+\tilde a_2x^{n_2-n_3}+\tilde a_3.
\end{equation}
 If $\deg \hat h\geq 1$, from \eqref{tildeg} it follows that $x^{n_1-n_3}+\tilde a_2x^{n_2-n_3}+\tilde a_3$ has a zero of mutiplicity $\alpha_1$. From Lemma~\ref{Hajos} it follows that $\alpha_1\leq 2$. Hence, either $\hat h(x)=1$ or $\alpha_1\in \{1, 2\}$. If $\hat h(x)=1$, then $l=k$ and $\tilde h(x)=x^k$, a contradiction. Analogously,  $\hat h(x)\neq x^m$ for all $m\in \N$.  Assume henceforth $\deg \hat h\geq 1$ and $\hat h(x)\neq x^m$ for all $m\in \N$.  Since $l\alpha_1=n_3$ and $\alpha_1\in \{1, 2\}$, it follows that $l\in \{n_3, n_3/2\}$. Let 
\[
\hat h(x)=x^{m_1}+ax^{m_2} + \textnormal{lower degree terms}, \ m_1>m_2\geq 0, \ a\neq 0. 
\]
From $\tilde f(x)=\tilde g(\tilde h(x))$ it follows that
\begin{align*}
x^{n_1}+\tilde a_2x^{n_2}+\tilde a_3x^{n_3}&=\tilde g(x) \circ \left ( x^l\left (x^{m_1}+ax^{m_2} + \textnormal{lower degree terms}\right)\right)\\
&=x^{n_1}+ta x^{n_1+m_2-m_1}+\textnormal{lower degree terms},
\end{align*}
(compare with $(*)$).  Hence, $n_1+m_2-m_1=n_2$  i.e.\@ $m_1-m_2=n_1-n_2$. Then $n_1-n_2\leq m_1$. Since $m_1=k-l$, and $l\in \{n_3, n_3/2\}$, it follows that $m_1\leq (n_1-n_3)/2$, hence $2(n_1-n_2)\leq n_1-n_3$, i.e. $n_1+n_3\leq 2n_2$, a contradiction.  
\end{proof}
As already mentioned, Zannier~\cite{Z08} proved that the number of terms of $g(h(x))$ tends to infinity as the number of terms of $h(x)$ tends to infinity. In the same paper, he gave an "algorithmic" parametric description of possible decompositions $f(x)=g(h(x))$, where $f(x)$ is a polynomial with a given number of terms and $g(x)$ and $h(x)$ are arbitrary polynomials. Fuchs and Zannier~\cite{FZ12} considered lacunary rational functions $f(x)$ (expressible as the quotient of two polynomials (not necessarily coprime) having each at most a given number $\ell$ of terms). By looking at the possible decompositions $f(x) = g(h(x))$, where $g(x), h(x)$ are rational functions of degree larger than $1$, they proved that, apart from certain exceptional cases which they completely described, the degree of $g(x)$ is bounded only in terms of $\ell$ (with explicit bounds). This is a rational function analogue of Theorem~\ref{Zannier}. 
In a very recent paper ~\cite{FMZ15}, via new methods, it is proved that for completely general algebraic equations $f(x,g(x))=0$, where $f(x,y)$ is monic of arbitrary degree in $y$, and has boundedly many terms in $x$, the number of terms of $g(x)$ is bounded. This includes previous results as special cases. 

\subsection{Acknowledgements}
The authors are thankful for the support of the Austrian Science Fund (FWF) via projects W1230-N13, FWF-P24302, FWF-P26114 and F5510 (part of the Special Research Program (SFB) "Quasi-Monte Carlo Methods: Theory and Applications").

\bibliographystyle{amsplain}
\bibliography{Intro}

\end{document}